% !TEX TS-program = pdflatexmk
\documentclass[11pt]{amsart}
\usepackage[parfill]{parskip}    % Activate to begin paragraphs with an empty line rather than an indent
\usepackage{fullpage}
\usepackage{graphicx}
\usepackage{amssymb,amsmath,latexsym}
\usepackage{amsthm}
\usepackage{amsfonts}
\usepackage[cal=pxtx]{mathalfa}
\usepackage{accents}
\usepackage{tikz, tikz-cd}
\usetikzlibrary{arrows,decorations.markings,decorations.pathreplacing,calc,matrix,intersections}
\tikzset{->-/.style={decoration={markings,mark=at position #1 with {\arrow{>}}},postaction={decorate}}}

\usepackage{epstopdf}
\usepackage{ifpdf}
\usepackage{color}
\usepackage[normalem]{ulem}

\definecolor{red}{rgb}{1,0,0} 

 \definecolor{darkgreen}{rgb}{0, .7, 0}

 \definecolor{purple}{rgb}{.7, 0, 1}

%%%%%%%%%%%%%%%% Test for a wide bar %%%%%%%%
\makeatletter
\newcommand*\rel@kern[1]{\kern#1\dimexpr\macc@kerna}
\newcommand*\widebar[1]{%
  \begingroup
  \def\mathaccent##1##2{%
    \rel@kern{0.8}%
    \overline{\rel@kern{-0.8}\macc@nucleus\rel@kern{0.2}}%
    \rel@kern{-0.2}%
  }%
  \macc@depth\@ne
  \let\math@bgroup\@empty \let\math@egroup\macc@set@skewchar
  \mathsurround\z@ \frozen@everymath{\mathgroup\macc@group\relax}%
  \macc@set@skewchar\relax
  \let\mathaccentV\macc@nested@a
  \macc@nested@a\relax111{#1}%
  \endgroup
}
\makeatother
%%%%%%%%%%%%%%%%%%%

\usepackage{float}

\usepackage[utf8]{inputenc}
\usepackage[a4paper]{geometry}

\usepackage{color}
\usepackage{caption}
\usepackage{float}
\usepackage{wrapfig}
\usepackage{faktor}
\usepackage{pgfplots}
\usepackage{mathtools}
\usepgfplotslibrary{polar}
\pgfplotsset{compat=newest}
\usetikzlibrary{arrows}

\usetikzlibrary{positioning}
\tikzset{main node/.style={circle,fill=black!0,draw,minimum size=0.35cm,inner sep=0pt},
            }

\usepackage{parskip}
\usepackage{mathrsfs}
\usepackage{xtab}
\newcommand{\Z}{\mathbb Z}  %Zed
\newcommand{\vcd}{{\sc{vcd}}} %virtual cohomological dimension
\newcommand{\AG}{A_\Gamma}
\newcommand{\UG}{U(A_\Gamma)} %Untwisted subgroup of A_\Gamma
\newcommand{\TG}{T(A_\Gamma)} %Twist subgroup of A_\Gamma
%  Subgroup generated by inversions and graph autoorphisms
\newcommand{\WP}{{\mathcal P}} % Gamma-Whitehead partition
\newcommand{\WQ}{{\mathcal Q}}
\newcommand{\WR}{{\mathcal R}} % Another \Gamma-Whitehead partition
\newcommand{\cS}{{\mathcal S}} %Partition of I(m), the inseparable subsets of V^{\pm}
\newcommand{\bI}{{\mathbf I}}

\newcommand{\inv}{^{-1}}
 
\newcommand{\iso}{\cong}
\newcommand{\oring}{\accentset{\circ}}

\newcommand{\GW}{$\Gamma$W}%{$\Gamma$-Whitehead}
 \newcommand{\iGW}{$\mathit\Gamma\!$W}%{$\Gamma$-Whitehead}

\author{Benjamin Millard and Karen Vogtmann}
\title{Cube complexes and abelian subgroups of automorphism groups of RAAGs}

\geometry{includehead,includefoot,left=4cm,right=2cm,top=2cm,bottom=2cm}

  \newtheorem{prop}{Proposition}[section]
\newtheorem{lem}[prop]{Lemma}

\newtheorem{thm}[prop]{Theorem}

\newtheorem{cor}[prop]{Corollary}

\newtheorem{rem}[prop]{Remark}
\newtheorem{notation}[prop]{Notation}
\theoremstyle{definition}
\newtheorem{example}[prop]{Example}
\newtheorem{definition}[prop]{Definition}
\newtheorem*{thm*}{Theorem}

\begin{document}

\begin{abstract}
We construct free abelian subgroups of the group $\UG$ of untwisted outer automorphisms of a right-angled Artin group, thus giving lower bounds on the virtual cohomological dimension.  The group $\UG$ was studied in \cite{CSV} by constructing a contractible cube complex on which it acts properly and cocompactly, giving an upper bound for the virtual cohomological dimension.  The ranks of our free abelian subgroups are equal to the dimensions of  {\em principal cubes} in this complex.  These are often of maximal dimension, so that the upper and lower bounds agree.  In many cases when the principal cubes are not of maximal dimension we show there is an invariant contractible subcomplex of strictly lower dimension.
\end{abstract}

\maketitle

\section{Introduction}

The class of right-angled Artin groups (commonly called RAAGs) contains the familiar examples of finitely generated free groups and free abelian groups. 
Though  uncomplicated themselves, both examples have complex and interesting automorphism groups.  In recent years these automorphism groups have been shown to share many properties, but also to differ in significant ways (see e.g. the survey articles \cite{BriVog, ECM}).  In this paper we study automorphism groups of general RAAGs, concentrating on the aspects they share with automorphism groups of free groups.  These aspects are largely captured by the subgroup of untwisted automorphisms,  as   previously studied in \cite{CSV}.  Let us  recall the definition.
  
A general RAAG is conveniently described by drawing a finite simplicial graph $\Gamma$.  The RAAG  is then the group $\AG$ generated by the vertices of $\Gamma$,  with defining relations that two generators commute if and only if the corresponding vertices are connected by an edge of $\Gamma$.    By  theorems of Laurence \cite{Lau} and Servatius \cite{Ser},  the  automorphism group  of $\AG$ is generated by  inversions of the generators,  graph automorphisms, admissible transvections (multiplying one generator by another) and admissible partial conjugations (conjugating some subset of generators by another generator).  Here   transvections and partial conjugations are  {\em admissible}   if they respect the commutation relations.   A transvection is called a {\em twist} if the generators involved commute.  The subgroup of $Out(A_\Gamma)$ generated by twists injects into a parabolic subgroup of $SL(n,\Z)$, where $n$ is the number of vertices of $\Gamma$, and is well understood.  The subgroup  generated by all generators other than twists is the {\em untwisted subgroup} $\UG$.  This subgroup captures the part of $Out(A_\Gamma)$ most closely related to $Out(F_n)$.  For example, if $A_\Gamma=F_n$ then $\UG=Out(F_n)$, and $\UG$ always contains the kernel of the map $Out(A_\Gamma)\to GL(n,\Z)$ induced by abelianization $A_\Gamma\to \Z^n$.   

 For free groups, the virtual cohomological dimension ({\sc{vcd}}) of $Out(F_n)$  is equal to the maximal rank of a free abelian subgroup.  The lower bound is established by exhibiting an explicit free abelian subgroup.  For the upper bound, one considers the action of $Out(F_n)$ on a contractible space $\mathcal{O}_n$ known as {\em Outer space}.  This action is proper, and $\mathcal{O}_n$ contains an equivariant deformation retract $K_n$ known as the {\em spine} of Outer space, whose dimension is equal to the lower bound (see \cite{CuVo}).   
 
For the subgroup $\UG$ associated to a  general RAAG, an analogous outer space $\mathcal{O}_\Gamma$ and spine $K_\Gamma$ were defined in \cite{CSV}.  The dimension of $K_\Gamma$   gives an obvious upper bound on the \vcd\ of $\UG$. 
Lower bounds were obtained in  \cite{BCV}  by  exhibiting free abelian subgroups (\cite{BCV} actually exhibited free abelian subgroups in the entire group $Out(\AG)$, but these contain identifiable subgroups of $\UG$).  However, there was no clear relationship between the rank of these subgroups and the dimension of $K_\Gamma$, and there was often a large gap between the upper bound and lower bounds.   
 
In this paper we address this problem.  The spine $K_\Gamma$ has the structure of a cube complex, and we produce free abelian subgroups in $\UG$ of rank equal to the dimension of certain {\em principal} cubes in $K_\Gamma$.   In the absence of a specific configuration in $\Gamma$ we find principal cubes of dimension equal to the dimension of $K_\Gamma$, thus determining the exact \vcd\  of $U(A_\Gamma)$.   
 
The free abelian subgroups we produce are generated by a special type of automorphisms called  {\em {$\Gamma$}-Whitehead automorphisms}. These generalize the generating set  used by J.H.C. Whitehead in his work on automorphisms of free groups \cite{Whi}.   We show that for any graph $\Gamma$, our free abelian subgroups  have the largest possible rank among those generated by $\Gamma$-Whitehead automorphisms, which we call the {\em principal rank} of $\UG$.

Because $\UG$ is analogous to $Out(F_n)$ it is tempting to conjecture that the \vcd\ of $\UG$ is equal to the principal rank.  It is also tempting to conjecture that the principal rank  is always equal to the dimension of $K_\Gamma$ \ldots\ but our results show that if the graph contains a specific configuration then the dimension of $K_\Gamma$ is strictly larger than the principal rank.  The first conjecture is still plausible, however, because at least in some cases when the dimension of $K_\Gamma$ is too large we can show that $K_\Gamma$ equivariantly deformation retracts onto a strictly lower-dimensional cube complex.

For $GL(n,\Z)$, of course, the \vcd\ is not equal to the rank of a free abelian subgroup, but rather is equal to the Hirsch rank of a certain (non-abelian) polycyclic subgroup.  In light of the above conjecture, it is natural to ask whether $\UG$ can contain a torsion-free, non-abelian solvable subgroup.  For many graphs the answer is no.  This was proved in \cite{ChVo1} for graphs with no triangles, and more generally for graphs where the link of every vertex is either discrete or connected.   If links are disconnected but not discrete, we do not know the answer.   

We remark that several authors have established upper and lower bounds on the \vcd\ of the full group $Out(\AG)$.  In particular bounds for graphs with no triangles were given in \cite{CCV}, the exact \vcd\ for $\Gamma$ a tree was established in \cite{BCV} and  other special cases were  determined exactly in \cite{DayWade}.

The paper is organized as follows.  In Section~\ref{basics} we review  basic facts and notation about right-angled Artin groups and their automorphisms, and define the subgroup $\UG$.  In Section~\ref{review} we review the definitions and results from \cite{CSV} that we will need in this paper.   In Section~\ref{abelian} we construct free abelian subgroups of $\UG$ using   $\Gamma$-Whitehead automorphisms, and show that these subgroups have maximal possible rank among all such  subgroups. 
Section~\ref{VCD} studies the dimension of $K_\Gamma$ and gives a condition for this dimension to equal the principal rank.  Section~\ref{sec:examples} works out some concrete examples.
 Finally, in Section~\ref{reduction}  we show in certain cases how to find an invariant deformation retract of $K_\Gamma$ of strictly lower dimension.   
 
{\bf Acknolwedgements.}. We thank Benjamin Br\"uck and Ric Wade for extremely useful comments on the first version of this paper.    The second author was partially supported by a Royal Society Wolfson award.  
 
\section{Right-Angled Artin Groups and their automorphisms}\label{basics}

In this section we recall the basic definitions and notation for right-angled Artin groups and their automorphisms.  For further details and proofs, we refer to \cite{CSV} and the references therein.

\begin{definition} Let $\Gamma$ be a finite simplicial graph, i.e. a finite graph with no loops or multiple edges, with vertex set $V=\{v_1,\dots,v_n\}$.   The \textit{right-angled Artin group} $A_\Gamma$ is the group with one generator for every vertex  of $\Gamma$ and one commutator relation  for each edge, i.e. $\AG$ has the presentation
\[
A_\Gamma=\langle v_1,\dots,v_n|[v_i,v_j]=1 \hbox{ whenever } v_i \hbox{ and }v_j \hbox{ are connected by an edge in  }\Gamma \rangle
\] 
\end{definition}

It is shown in \cite{HM} that two words in the generators represent the same element of $\AG$ if and only if they can be made identical by a process of switching adjacent commuting letters and cancelling where possible. 

If $\Gamma$ is  a simplicial graph with vertex set $V$, recall that the \textit{induced subgraph on $U\subseteq V$} is the subgraph of $\Gamma$ with vertex set $U$ that contains all edges in $\Gamma$ connecting any vertices in $U$.

\begin{definition}
Let $v$ be a vertex of a simplicial graph $\Gamma$. The \textit{link of $v$}, denoted $lk(v)$, is the induced subgraph on the set of vertices adjacent to $v$. The \textit{star of $v$}, denoted $st(v)$, is the induced subgraph on the set of vertices in $lk(v)$ together with $v$ itself.
\end{definition}

We will need the fact, shown  in \cite{Lau}, that the centralizer of a generator $v$ is equal to the subgroup generated by the vertices in $st(v)$.
 
 In the literature on right-angled Artin groups it is common to define a relation denoted $\leq$ on  vertices of $\Gamma$ by    $v\leq w$ if $lk(v)\subseteq st(w)$.  The notation is justified by defining an equivalence relation $v\sim w$ if $v\leq w$ and $w\leq v$;  it is then easy to verify that this relation defines a partial order on equivalence classes $[v]$. A vertex is called {\em maximal} if its equivalence class is maximal in this partial ordering.  
 
 In fact there are two mutually exclusive ways in which we can have $lk(v)\subseteq st(w)$:  either $lk(v)\subseteq lk(w)$ or $st(v)\subseteq st(w)$.  The distinction is important in this paper, so when we need to make it we will use  $v\leq_{\star} w$ to mean $st(v)\subseteq st(w)$ and $v \leq_\circ w$ to mean $lk(v)\subseteq lk(w)$ (Similarly,  $v\geq_{\star} w$ means $st(v)\supseteq st(w)$ and $v \geq_\circ w$ means $lk(v)\supseteq lk(w)$.)  
 
 We also write $v\sim_{\star}w$ if $st(v)=st(w)$ and $v\sim_\circ w$ if $lk(v)=lk(w)$, and define $[v]_{\star}=\{w|w\sim_{\star}v\},$ $[v]_\circ=\{w|w\sim_\circ v\}$.  Since either
all elements of an equivalence class $[v]$  commute or none commute, at least one of $[v]_{\star}$ and $[v]_\circ$ is a singleton.  If $[v]_\circ$ is not a singleton then $[v]$ is called a {\em non-abelian equivalence class}; otherwise $[v]$ is called an {\em abelian equivalence class}  (in particular a singleton class is considered to be abelian). 

\begin{definition} A vertex    $v$  of $\Gamma$ is  {\em principal} if there is no $w$ with $v<_\circ w,$ i.e. with $lk(v)$ strictly contained in $lk(w)$.
\end{definition}

All maximal vertices are principal, but there can be principal vertices which are not maximal.  A simple example is a triangle with leaves at two of its vertices.  The third vertex is principal but not maximal.   Elements of non-singleton abelian equivalence classes are always principal:
 
 \begin{lem}\label{abprin} If $u\neq v$ but $u\sim_{\star} v$ then both $u$ are $v$ are principal vertices.  \end{lem}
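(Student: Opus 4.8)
The plan is to show that $u \sim_\star v$ with $u \neq v$ forces $u$ (and by symmetry $v$) to be principal, i.e.\ that there is no $w$ with $lk(u) \subsetneq lk(w)$. First I would record the structural consequence of $u \sim_\star v$: since $st(u) = st(v)$ and $u \neq v$, the vertex $v$ lies in $st(u)$ but $v \neq u$, so $v \in lk(u)$; symmetrically $u \in lk(v)$. Thus $u$ and $v$ are adjacent, and moreover $lk(u) \cup \{u\} = lk(v) \cup \{v\}$, which gives $lk(v) = (lk(u) \setminus \{v\}) \cup \{u\}$ and likewise $lk(u) = (lk(v)\setminus\{u\})\cup\{v\}$. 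In particular $lk(u)$ and $lk(v)$ differ only in that one contains $u$ where the other contains $v$; note $u \in lk(u)$ is impossible (no loops), so $u \notin lk(u)$ but $u \in lk(v)$, and vice versa.

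Now suppose for contradiction that $lk(u) \subsetneq lk(w)$ for some vertex $w$. Since $v \in lk(u)$, adjacency of $u$ and $v$ gives $v \in lk(w)$, so $w$ is adjacent to $v$, i.e.\ $w \in lk(v)$. Also $w \neq u$: if $w = u$ we would need $lk(u) \subsetneq lk(u)$. I would then compare $lk(v)$ with $lk(w)$ directly using the description above. Take any $x \in lk(v)$. If $x = u$, then since $w \in lk(v) = (lk(u)\setminus\{u\}) \cup \{u\}$ wait—rather, $w$ adjacent to $v$ and $w$ adjacent to $u$ (because $w \in lk(v)$ and, as $u \in lk(v)$ forces... let me instead argue $u \in lk(w)$): from $v \in lk(w)$ and $u \in lk(v) = st(v) \setminus\{v\}$, and using $st(u) = st(v)$... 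The cleanest route is: $lk(u) \subsetneq lk(w)$ implies $st(u) \subseteq st(w)$, and combined with $v \in lk(w) = lk(u)$-plus-more we get $st(v) = st(u) \subsetneq st(w) \cup \{?\}$; I would chase this to conclude $lk(v) \subseteq lk(w)$ as well, hence $lk(v) \cup \{v\} = lk(u)\cup\{u\} \subseteq lk(w) \cup \{w\}$, so $st(u) \subseteq st(w)$. Since $lk(u) \subsetneq lk(w)$ is strict, pick $z \in lk(w) \setminus lk(u)$. Then $z \in lk(w) \subseteq st(w)$, and $z$ is adjacent to $w$; I claim this forces a contradiction with $st(u) = st(v)$ being a star, by producing a vertex adjacent to one of $u,v$ but not the other, or by showing $w$ itself would have to lie in $st(u)=st(v)$ while $lk(w)$ properly contains $lk(u)$, contradicting that $u,v$ play symmetric roles.

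The genuine content — and the step I expect to be the main obstacle — is pinning down exactly how a strict containment $lk(u) \subsetneq lk(w)$ interacts with the near-coincidence $lk(u) \,\triangle\, lk(v) = \{u,v\}$. The key observation I would isolate as a small computation is: if $lk(u) \subsetneq lk(w)$ then also $lk(v) \subseteq lk(w)$ (using $u \in lk(v)$, adjacency, and that the only vertices where $lk(u)$ and $lk(v)$ disagree are $u$ and $v$ themselves, both of which end up in $lk(w)$), so $st(v) \subseteq st(w)$; then by the dichotomy "$st(v) \subseteq st(w)$ or $lk(v)\subseteq lk(w)$, mutually exclusive" mentioned in the excerpt — applied carefully — we reach a contradiction, since we would simultaneously have $st(v) \subseteq st(w)$ \emph{and} (from $lk(u)=lk(v)$ off of $\{u,v\}$) $lk(v) \subseteq lk(w)$, violating mutual exclusivity unless $v = w$, which is excluded. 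Once that incompatibility is extracted the rest is bookkeeping; I would write it so that the symmetric conclusion for $v$ follows by literally swapping $u$ and $v$.
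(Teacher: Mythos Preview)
Your proposal circles the correct idea but never pins down the one step that actually does the work. You correctly observe that $v\in lk(u)\subseteq lk(w)$, hence $w\in lk(v)$. From here the paper's argument is one line: since $lk(v)\subset st(v)=st(u)$ and $w\neq u$, we get $w\in lk(u)$; but then $w\in lk(u)\subseteq lk(w)$ says $w$ is adjacent to itself, contradicting simplicity of $\Gamma$. That is the entire proof.

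Your version instead asserts that ``both $u$ and $v$ end up in $lk(w)$'' so as to conclude $lk(v)\subseteq lk(w)$, but the claim $u\in lk(w)$ is precisely the missing step, and you never justify it --- you gesture at ``using $st(u)=st(v)$'' and then abandon the thread. Once you \emph{do} justify it (via $w\in lk(v)\subset st(v)=st(u)$, hence $w\in lk(u)$), you already have the contradiction $w\in lk(w)$ and are finished; the detour through $lk(v)\subseteq lk(w)$, $st(v)\subseteq st(w)$, and the $\leq_\circ$/$\leq_{\star}$ dichotomy is superfluous. (That dichotomy argument would in the end work, since having both $lk(v)\subseteq lk(w)$ and $w\in lk(v)$ is exactly the contradiction --- but it is just a repackaging of ``$w\in lk(w)$''.) Note also that your passing claim ``$lk(u)\subsetneq lk(w)$ implies $st(u)\subseteq st(w)$'' is false in general: those are the two mutually exclusive alternatives for $u\leq w$, not consequences of one another.
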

 
 \begin{proof} If $u$ is not principal there exists  $m$ with $u<_\circ m$, i.e. $lk(u)\subsetneq lk(m)$. Now $v\in lk(u)\subset lk(m)$, so $m\in lk(v)\subset st(v)=st(u)$.  Since  $m\neq u$ we must have $m\in lk(u)$, which is a contradiction. \end{proof}

\subsection{Automorphisms of RAAGs}

An invertible map $A_\Gamma\to A_\Gamma$ is an automorphism if and only if  the images of commuting generators commute.     In particular:
\begin{itemize}
\item{} the map   sending a generator $v$ to its inverse and fixing all other generators is an automorphism, called an {\em inversion}.  % Similarly,
\item{} any automorphism of the defining graph $\Gamma$ induces an automorphism of $A_\Gamma$, called a {\em graph automorphism}.  
\end{itemize}
Inversions and graph automorphisms generate a finite subgroup of $Aut(A_\Gamma)$.  
We next describe  two types of basic infinite-order automorphisms.  Choose a vertex $m$ and consider the  components of $\Gamma-st(m).$  
\begin{itemize}
\item{} If there is a vertex  $u$ with $lk(u)\subseteq lk(m)$, then everything that commutes with $u$ also commutes with $m$ so the  map $\rho_{um}$ sending $u\mapsto um$ and fixing all other generators determines  an  automorphism, called a {\em right fold}.  Since $u$ and $m$ do   not commute, the map $\lambda_{um}$ sending $u$ to $mu$ gives a distinct automorphism, called a {\em left fold}.
\item{}  If  $C$ is a component of $\Gamma-st(m),$  then the map sending $v$ to $m^{-1}vm$ for every $v\in C$  and fixing all other generators determines an infinite-order automorphism, called a {\em partial conjugation}.    If $\Gamma-st(m)$ has only one component, this is an inner automorphism, since conjugating vertices of $st(m)$ by $m$ has no effect.  
\end{itemize}
 By work of Laurent \cite{Lau} and Servatius \cite{Ser}, the entire automorphism group $Aut(A_\Gamma)$ is generated by the above types of automorphisms together with {\em twists}, 
where 
\begin{itemize}
\item{} If $st(u)\subseteq st(v)$, the map $\tau_{uv}$ sending $u\mapsto uv=vu$ and fixing all other generators determines an automorphism called a {\em twist}.  
\end{itemize}
\subsection{The Untwisted subgroup}
The natural map $Aut(\AG)\to GL(n,\Z)$ induced by abelianization $\AG\to\Z^n$ factors through the outer automorphism group $Out(\AG)$:
\begin{equation*}
\begin{tikzcd} 
Aut(A_\Gamma) \arrow[rr]  \arrow[dr]   &&GL(n,\Z)\\
 &Out(\AG)\arrow[ur]&\\
\end{tikzcd}
\end{equation*}
The subgroup $\TG\subseteq Out(\AG)$  generated by twists injects into a parabolic subgroup of $GL(n,\Z)$, and is well understood (see, e.g., \cite{ChVo1}).   
 In this paper we concentrate on the subgroup $\UG\leq Out(A_\Gamma)$ generated by all other generators, i.e. 
 \begin{definition} The {\em untwisted subgroup} $\UG$ is the subgroup of $Out(\AG)$  generated by (the images of)
 \begin{itemize}
 \item inversions, \item graph automorphisms,   \item (right and left) folds, and \item partial conjugations.
 \end{itemize}  
\end{definition} 
 The intersection $\UG\cap \TG$ is contained in the finite subgroup  generated by graph automorphisms and inversions.  

\section{$\Gamma$-Whitehead automorphisms, partitions and outer space for $\UG$.}\label{review}

The paper \cite{CSV} studied $\UG$ by constructing a contractible space $\mathcal O_\Gamma$ with a proper action of $\UG$.  In this section we review the definitions  and results from \cite{CSV} that we will need in this paper. Some of the terminology has been altered slightly, and we will point this out when it occurs. We refer to \cite{CSV} for more details and all proofs. 
 
\subsection{$\Gamma$-Whitehead automorphisms} Whitehead studied $Aut(F_n)$ using a set of generators called  Whitehead automorphisms.  These were adapted in \cite{CSV} to a give a set of elements of  $Aut(\AG)$ called {\em $\Gamma$-Whitehead automorphisms}, whose images in $Out(\AG)$ along with graph automorphisms and inversions generate $\UG$.  These are infinite-order automorphisms which include folds and partial conjugations but also certain combinations of these.  

For a free group with basis $V$, let $V^{\pm}=V\sqcup V^{-1}$  be the set of generators and their inverses.  Suppose $P\subset V^\pm$ contains some element $m$ but not $m\inv$.  The {\em Whitehead automorphism} $\phi(P,m)$ is defined on the basis $V$ by 
\[ \phi(P,m)(v)= \left\{ \begin{array}{lllll}
    vm^{-1}  & \mbox{if}\:\:v\in P,v^{-1}\in P^*, v\neq m^{\pm 1} \\
       mv & \mbox{if}\:\:v^{-1}\in P,v\in P^*, v\neq m^{\pm 1}  \\
    mvm^{-1}  & \mbox{if}\:\:v,v^{-1}\in P \\     
    v & \mbox{otherwise (including }v=m^{\pm 1})
\end{array}\right.\]
The element $m$ is called the {\em multiplier} of $\phi(P,m)$. 

If $V$ is the set of vertices of a simplicial graph $\Gamma$, then this formula defines an automorphism of $\AG$ only for certain pairs $(P,m)$.  Specifically, for $m\in V^\pm$ 
consider the components $C$ of $\Gamma-lk(m)$, where by $lk(m)$ we mean the link of the corresponding vertex $m^{\pm1}$.   A subset $U\subset V^{\pm}$ is  {\em $m$-inseparable}   if  
\begin{itemize}
\item {}   $C$ has only one  vertex $u$, and $U=\{u\}$ or $U=\{u^{-1}\}$ (note this includes the case $u=m^{\pm 1}$), or
\item {}   $C$  contains more than one vertex and $U=C^{\pm},$ i.e. $U$ is the union of  all vertices in $C$ and their inverses.
\end{itemize} 
We denote by $\bI(m)$ the collection of all $m$-inseparable subsets of $V^{\pm}$. Note that $\bI(m)=\bI(m\inv)$, and if $m$ and $n$ have the same link then $\bI(m)=\bI(n)$.

\begin{figure}\begin{center}
\begin{tikzpicture}[scale=1.5] 
%%%% The central vertex m
\fill [black] (0,0) circle (.05); 
\node [right](m) at (0,0) {$m$};
%%%%%% Right hand side
\draw (0,0) to (.5,.5);
\draw[red] (.5,.5) to (-.5,.5);
\fill [red] (.5,.5) circle (.05);
\node [above] (x1) at (.5,.5) {$x_1$};
\node [right] (u) at (1,0) {$u$};
\draw (0,0) to (.5,-.5);
\fill [red] (.5,-.5) circle (.05);
\node [below] (x2) at (.5,-.5) {$x_2$};
\draw (.5,.5) to (1,0);
\draw (.5,-.5) to (1,0);
\fill [black] (1,0) circle (.05);
%%%%%%%%% Left hand side
\draw (0,0) to (-.5,.5);
\fill [red] (-.5,.5) circle (.05);
\node [above] (x4) at (-.5,.5) {$x_4$};
\draw (0,0) to (-.5,-.5);
\fill [red] (-.5,-.5) circle (.05);
\node [below] (x3) at (-.5,-.5) {$x_3$};
\draw (-.5,.5) to (-1,.5);
\fill [black] (-1,.5) circle (.05);
\node [left] (v1) at (-1,.5) {$v_1$};
\draw (-.5,-.5) to (-1,-.5);
\fill [black] (-1,-.5) circle (.05);
\node [left] (v2) at (-1,-.5) {$v_2$};
\draw (-1,-.5) to (-1,.5);
\end{tikzpicture}
\caption{Example~\ref{ex1}}\label{Gamma}
\end{center}
\end{figure}

\begin{example}\label{ex1} In  the graph $\Gamma$  in Figure~\ref{Gamma} the link of the  vertex $m$ is the red subgraph, and the $m$-inseparable subsets are
$$\bI(m)=\left\{ \{m\}, \{m^{-1}\}, \{u\}, \{u^{-1}\}, \{v_1,v_1^{-1}, v_2,v_2^{-1}\} \right\}$$ %,  \
\end{example}

 Recall that a partition of a set into two subsets is {\em thick} if each side has at least two elements.

\begin{definition}  Let $m\in V^\pm$. 
\begin{itemize}
\item  A subset $P\subset V^\pm$ is called a  {\em \iGW-subset based at $m$} if it is a union of elements of $\bI(m)$ and contains $m$ but not $m\inv$. 
 
\item If $P$ is a \GW-subset based at $m$ then $\phi(P,m)$ is a well-defined automorphism of $\AG$, called a {\em  $\Gamma$-Whitehead automorphism}.   

\item  Let $P^*=V^\pm\setminus lk(m)^\pm\setminus P$. The three-part partition $\WP=\{P|P^*|lk(m)^\pm\}$  of $V^\pm$  is called a  {\em  \iGW-partition based at $m$} if $P$ (and therefore $P^*$) are \GW-subsets and $ \{P|P^*\}$ is a thick partition of $V^\pm\setminus lk(m)^\pm$.   The subsets  $P$ and $P^*$ are called the {\em sides} of $\WP$.   
\end{itemize}
 \end{definition}

\begin{rem}
For $\AG=F_n$ the above is the usual definition of a Whitehead automorphism. In  \cite{CSV}, however,  a $\Gamma$-Whitehead automorphism was defined as sending $m\mapsto m^{-1}$ instead of $m\mapsto m$.  This makes the automorphism into an involution, and is useful for describing geometric aspects of $\UG$.  Since  we are looking for free abelian subgroups  we   do not want involutions, so will use the  more classical definition  stated here.  
\end{rem}

In terms of the inseparable subsets $U \in \bI(m)$,  $\phi(P,m)$   is the composition of 
\begin{itemize}
\item  right folds $v\mapsto vm^{-1}$ for $U=\{v\}\subset 
P,$ $v\neq m^{\pm 1}$, 
\item   left folds $v\mapsto mv$ for $U=\{v\inv\}\subset P,$  $v\neq m^{\pm 1}$, and 
\item partial conjugations $v\mapsto mvm^{-1}$ for $U=C^\pm\subset P$ if $C$ has at least two elements. 
\end{itemize}

\begin{example}Continuing Example~\ref{ex1}, we can take $P= \{m\}\cup \{u\}\cup \{v_1,v_1^{-1}, v_2,v_2^{-1}\}$ and $P^*=  \{m^{-1}\}\cup \{u^{-1}\} $ to get  a \GW-partition
 \[
 \WP= \left\{\{m,u,  v_1,v_1^{-1},v_2,v_2^{-1}\} | \{m^{-1}, u^{-1}\} | \{  x_1,x_1^{-1}, x_2,x_2^{-1},x_3,x_3^{-1},x_4,x_4^{-1}\}\right\}  
\]
based at $m$ (see Figure~\ref{GWpicture}). The $\Gamma$-Whitehead automorphism $\phi(P,m)$ sends $u\mapsto um\inv,$ sends each $ v_i\mapsto mv_im^{-1}$ and fixes $m$ and the $x_i$.  The $\Gamma$-Whitehead automorphism $\phi(P^*,m\inv)$ sends $u\mapsto m\inv u$ and fixes all other generators.
\end{example}

  \begin{figure}
\begin{center} 
\begin{tikzpicture}[scale=1] 
\draw [rounded corners, blue, fill=blue!20](-1.5,1.7) to (2.5,1.7) to (2.5,-.4) to (.5,-.4) to (.5,.7) to (-1.5,.7) --cycle;
\draw [rounded corners, blue](-1.5,-.4) to (-1.5,.6) to (.4,.6) to (.4,-.4) to   (-1.5,-.4) --cycle;
\draw [rounded corners, red](2.75,-.4) to (2.75,1.7) to (7,1.7) to (7,-.4) to   (2.75,-.4) --cycle;
\node [above](x) at (-1,1) {$ u$};
\node (x) at (-1,0) {$ u\inv$};
\node [red](x) at (0,0) {$m\inv$};
\node [red,above](x) at (0,1) {$m$};
 \node (x) at (1,0) {$v_1\inv$};
\node [above](x) at (1,1) {$v_1$};
\node  (x) at (2,0) {$v_2\inv$}; 
 \node [above] (x) at (2,1) {$v_2$};
\node (x) at (3.5,0) {$x_1\inv$};
 \node [above](x) at (3.5,1) {$x_1$}; 
 \node (x) at (4.5,0) {$x_2\inv$};
\node [above](x) at (4.5,1) {$x_2$}; 
\node (x) at (5.5,0) {$x_3\inv$};
\node [above](x) at (5.5,1) {$x_3$}; 
\node (x) at (6.5,0) {$x_4\inv$};
 \node [above](x) at (6.5,1) {$x_4$}; 
\node [blue] (P) at (1.5,.65) {$P$};
\node [blue] (Pc) at (-.5,.3) {$P^*$};
\node  [red]  (L) at (5,.65) {$lk(P)$};
 \end{tikzpicture}
\end{center}
\caption{Example of a \GW-partition based at $m$ for the graph in Figure~\ref{Gamma}}
\label{GWpicture}
\end{figure}
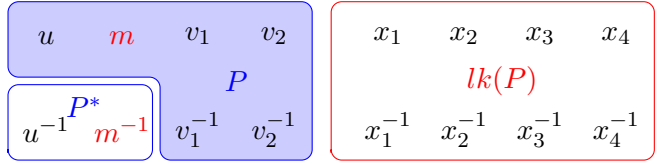

\begin{lem}\label{GWautos}Let $\phi(P,m)$ be a $\Gamma$-Whitehead automorphism. Then  
\begin{enumerate}
 \item $\phi(P,m)\inv=\phi(P\setminus\{m\}\cup \{m\inv\}, m\inv)$
 \item $\phi(P^*,m^{-1})$   is equal to $\phi(P,m)$ composed with conjugation by $m$, 
 so the two are equal as outer automorphisms.  
 \end{enumerate}
 \end{lem}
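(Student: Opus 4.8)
The plan is to prove both statements by direct evaluation on generators: record what each automorphism appearing in the identity does to an arbitrary $v\in V$, and compare the resulting elements of $\AG$. First I would fix the bookkeeping that makes this a short finite check. By construction $V^\pm=lk(m)^\pm\sqcup P\sqcup P^*$, the set $lk(m)^\pm$ is closed under inversion, and if $v\in P\cup P^*$ then $v$ lies in a component of $\Gamma-lk(m)$, so neither $v$ nor $v\inv$ lies in $lk(m)^\pm$. Hence every generator $v$ falls into exactly one of the cases: $v=m^{\pm1}$; both $v,v\inv\in lk(m)^\pm$; or each of $v,v\inv$ lies in one of $P,P^*$ (four subcases), and in each case the defining formula assigns $v$ one of $v$, $vm\inv$, $mv$, $mvm\inv$. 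I would also check that the right-hand sides are legitimate $\Gamma$-Whitehead automorphisms: for (1), $Q:=(P\setminus\{m\})\cup\{m\inv\}$ is obtained from the decomposition of $P$ into $m$-inseparable sets by exchanging $\{m\}$ for $\{m\inv\}$, so since $\{m\},\{m\inv\}\in\bI(m)=\bI(m\inv)$ it is a union of elements of $\bI(m\inv)$ containing $m\inv$ but not $m$; for (2), $V^\pm\setminus lk(m)^\pm$ is the disjoint union of the members of $\bI(m)$ (with $\{u\}$ and $\{u\inv\}$ counted separately for each singleton component $\{u\}$ of $\Gamma-lk(m)$), so $P^*$ is again such a union and contains $m\inv$ but not $m$.

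For part (1), since $\phi(P,m)$ is a bijection it suffices to check that $\phi(Q,m\inv)\circ\phi(P,m)$ fixes every generator, which is one line per case. For instance, if $v\in P$, $v\inv\in P^*$, $v\neq m^{\pm1}$, then $\phi(P,m)(v)=vm\inv$ and, since $v\in Q$ and $v\inv\in Q^*$, $\phi(Q,m\inv)(vm\inv)=(vm)m\inv=v$; if $v,v\inv\in P$, then $\phi(P,m)(v)=mvm\inv$ and $\phi(Q,m\inv)(mvm\inv)=m(m\inv vm)m\inv=v$; the cases $v,v\inv\in P^*$, $v\in P^*$ with $v\inv\in P$, $v\in lk(m)^\pm$, and $v=m^{\pm1}$ are handled the same way (and are trivial in the last two). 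Equivalently, one can use the description recalled before Lemma~\ref{GWautos} of $\phi(P,m)$ as a composition of pairwise commuting elementary left/right folds and partial conjugations by $m$, each moving a distinct generator and fixing $m$; inverting each factor turns multiplication/conjugation by $m$ into multiplication/conjugation by $m\inv$ and reassembles to $\phi(Q,m\inv)$. This is just the classical statement that Whitehead automorphisms are closed under inversion.

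For part (2), the same case list yields $\phi(P^*,m\inv)(v)=m\inv\,\phi(P,m)(v)\,m$ for every generator $v$: when $v\in P\cup P^*$ this is immediate once one notes that $P^*$ now plays the role of ``$P$'' in the formula and $m\inv$ is the multiplier — e.g.\ $v\in P$, $v\inv\in P^*$ gives $\phi(P^*,m\inv)(v)=m\inv v=m\inv(vm\inv)m$ — and when $v=m^{\pm1}$ both sides equal $v$. The only case where the defining relations of $\AG$ are needed, and the only case with no free-group analogue, is $v\in lk(m)^\pm$: there both automorphisms fix $v$ and $m\inv vm=v$ because $v$ commutes with $m$. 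Since $x\mapsto m\inv xm$ is precisely conjugation by $m$ (in the convention used above for partial conjugations), this shows $\phi(P^*,m\inv)$ is $\phi(P,m)$ followed by conjugation by $m$, and in particular the two coincide in $Out(\AG)$. I do not expect any genuine obstacle in this argument — it is a finite verification — the only points requiring attention being the well-definedness checks for $Q$ and $P^*$, a disciplined case enumeration, and the single new ingredient (commutativity of $v$ and $m$) in the link case of part (2).
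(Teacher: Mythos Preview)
Your proposal is correct and is exactly the intended argument: the paper's own proof is the single line ``Clear from the definitions,'' and your case-by-case evaluation on generators is precisely how one unpacks that. The only thing worth noting is that you have supplied far more detail than the paper does; the well-definedness checks for $Q$ and $P^*$ and the use of commutativity for $v\in lk(m)$ are all accurate.
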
  
 \begin{proof} Clear from the definitions.
 \end{proof}

For a \GW-partition $\WP=\{P|P^*|lk(m)^\pm\}$ based at $m$ we define the outer automorphism $\varphi(\WP,m)$ to be  
                 \[
                 \varphi(\WP,m)=
                 \begin{cases}
                 \text{the image of }\phi(P,m) & \text{if } m\in P\\ 
                 \text{the image of }\phi(P^*,m) & \text{if }  m\in P^*
                 \end{cases}
                 \]
  We will call $\varphi(\WP,m)$ an {\em outer $\Gamma$-Whitehead automorphism}. By  Lemma~\ref{GWautos}, $\varphi(\WP,m)=\varphi(\WP,m\inv)$, so we can think of the $m$ in $\varphi(\WP,m)$ as a vertex of $\Gamma$ instead of an element of $V^\pm$.   
  
  \begin{notation} We extend the relations $\leq,\sim,  \leq_\circ, \sim_\circ, \leq_{\star}, \sim_{\star}$ etc.  to elements of $V^\pm$ by saying a relation holds if and only if it holds for   the corresponding vertices.
  \end{notation}

If $P$ is a \GW-subset  based at $m$, let   $max(P)$ be  the elements $n\in P$ with $n\sim_\circ m$ and $n\inv\not\in P$.  Then $P$ is also  based at any $n\in max(P)$.  Since all elements of $max(P)$ have the same link,  we will  write  $\WP=\{P|P^*| lk(P)\}.$   There is a $\Gamma$-Whitehead automorphism $\phi(P,m)$ for each $m\in max(P)$.

\begin{definition}(\cite{CSV}, Definition 3.3) Let $\WP$ and $\WQ$ be \GW-partitions, with $\WP$ based at $m$ and $\WQ$ based at $n$.  Then $\WP$ and $\WQ$ are {\em compatible} if   either  
\begin{enumerate}
\item $P^{\times}\cap Q^{\times}=\emptyset$ for at least one choice of sides $P^{\times}\in\{P,P^*\}$ and $Q^{\times}\in\{Q,Q^*\},$ or 
\item $[m,n]=1$ but $st(m)\neq st(m)$.  
\end{enumerate}

%\begin{enumerate}
%\item  $\WP$ and $\WQ$ {\em commute} if  $m$ and $n$ commute and are not equivalent  
% \item $\WP$ and $\WQ$ are {\em compatible} if either they commute or 
%   $P^{\times}\cap Q^{\times}=\emptyset$ for at least one choice of sides $P^{\times}\in\{P,P^*\}$ and $Q^{\times}\in\{Q,Q^*\}$.
%\end{enumerate}
\end{definition}

\begin{rem}\label{CSVerror} This is the definition of compatibility  given in \cite{CSV}.  However the definition that is actually used in the proofs in that paper is weaker: condition (2) needs to be replaced by 
\begin{itemize}
\item   $[m,n]=1$ but $m\neq n$.  
\end{itemize}
We will call this {\em weak compatibility}.  The proofs in this paper use the stronger notion of compatibility, but we show in Lemma~\ref{weak} that this does not change the results of this paper.  
\end{rem}

If  the bases $m$ of $\WP$ and $n$ of $\WQ$ do not commute, the following lemma constrains the relationships between sides of $\WP$ and $\WQ$. 

\begin{lem}(\cite{CSV}, Lemma 3.4)
\label{useful}
Suppose that $\WP=\{P|P^*| lk(P)\}$ based at $m$ and $\WQ=\{Q|Q^*|lk(Q)\}$ based at $n$ are compatible,  $m$ and $n$  do not commute and $P\cap Q=\emptyset$. Then $P\cap lk(Q)=\emptyset$. In particular, $P\subseteq Q^*$ and $Q\subseteq P^*$.
\end{lem}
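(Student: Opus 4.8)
The plan is to establish the main assertion $P\cap lk(Q)=\emptyset$ by contradiction, and then read off the two containments. Suppose $x^{\epsilon}\in P\cap lk(Q)$ for some vertex $x$ and sign $\epsilon=\pm1$; by our convention $lk(Q)=lk(n)^{\pm}$, so $x$ is adjacent to $n$ in $\Gamma$. Since $m$ and $n$ do not commute we have $m\notin lk(n)$ and $n\notin lk(m)$. As $\WP=\{P|P^{*}|lk(m)^{\pm}\}$ is a partition of $V^{\pm}$ and $x^{\epsilon}\in P$, we cannot have $x^{\epsilon}\in lk(m)^{\pm}$, so $x\notin lk(m)$; hence $x$ lies in some component $C$ of $\Gamma-lk(m)$.

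The key step is to force $n$ into $C$. The vertices $x$ and $n$ are adjacent and neither lies in $lk(m)$, so the edge joining them is an edge of $\Gamma-lk(m)$, and therefore $n$ lies in the same component $C$ as $x$. Since $\Gamma$ has no loops, $n\neq x$, so $C$ has more than one vertex, and thus $C^{\pm}\in\bI(m)$. Moreover $C^{\pm}$ is the \emph{unique} element of $\bI(m)$ containing $x^{\epsilon}$: the elements of $\bI(m)$ lying in $V^{\pm}\setminus lk(m)^{\pm}$ are pairwise disjoint, each being a singleton $\{u^{\pm1}\}$ coming from a one-vertex component or $D^{\pm}$ coming from a component $D$ with at least two vertices. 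Since $P$ is a union of elements of $\bI(m)$, it follows that $C^{\pm}\subseteq P$. But $n\in C\subseteq C^{\pm}$, so $n\in P$, while $n\in Q$ because $\WQ$ is based at $n$ --- contradicting $P\cap Q=\emptyset$. Hence $P\cap lk(Q)=\emptyset$.

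Combining $P\cap lk(Q)=\emptyset$ with the hypothesis $P\cap Q=\emptyset$ and the partition $V^{\pm}=Q\sqcup Q^{*}\sqcup lk(Q)$ gives $P\subseteq Q^{*}$. For the second containment, $Q\subseteq P^{*}$, I would simply interchange the roles of $(\WP,m)$ and $(\WQ,n)$ and repeat the argument: all of the hypotheses --- compatibility, $[m,n]\neq1$, and $P\cap Q=\emptyset$ --- are symmetric in $\WP$ and $\WQ$, so the same reasoning yields $Q\cap lk(P)=\emptyset$, and hence $Q\subseteq P^{*}$.

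I do not expect a genuine obstacle; the one point needing care is the bookkeeping with inseparable subsets in the second paragraph, namely the observation that membership of the single element $x^{\epsilon}$ in the union $P$ drags the whole inseparable piece $C^{\pm}$ into $P$. It is worth noting, finally, that the argument uses the compatibility hypothesis only through the supplied equality $P\cap Q=\emptyset$: since $[m,n]\neq1$, clause (2) of the definition of compatibility cannot hold, and $P\cap Q=\emptyset$ is precisely clause (1) for this choice of sides.
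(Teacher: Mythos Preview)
Your proof is correct. The paper does not actually prove this lemma; it merely cites it as Lemma~3.4 of \cite{CSV}, so there is no in-paper argument to compare against. Your contradiction argument is the natural one: the key observation is exactly what you flag, namely that once a single letter $x^{\epsilon}$ of a multi-vertex component $C$ of $\Gamma-lk(m)$ lies in the $\Gamma$W-subset $P$, the entire inseparable piece $C^{\pm}$ must lie in $P$, dragging $n$ (and $n^{-1}$) into $P$ and contradicting $P\cap Q=\emptyset$. The symmetry argument for $Q\subseteq P^{*}$ is valid since, as you note, all hypotheses are symmetric in $(\WP,m,P)$ and $(\WQ,n,Q)$. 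One small remark: your proof implicitly uses that in the notation $\WQ=\{Q|Q^{*}|lk(Q)\}$ based at $n$ one has $n\in Q$; this is indeed how the paper sets up the notation (the first-named side is the $\Gamma$W-subset containing the base), so the step ``$n\in Q$ because $\WQ$ is based at $n$'' is justified.
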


\subsection{Outer space $\mathcal O_\Gamma$ and its  spine $K_\Gamma$}\label{spine}

In \cite{CSV} an ``outer space" $\mathcal O_\Gamma$ was defined on which $\UG$ acts properly, and it was proved that $\mathcal O_\Gamma$ is contractible.  The proof proceeds by retracting $\mathcal O_\Gamma$ equivariantly onto a {\em spine}  $K_\Gamma,$ which is the geometric realization of a partially ordered set (poset) of {\em marked $\Gamma$-complexes} $(g,X)$ with $\pi_1(X)\iso \AG$.    

The simplest example of a $\Gamma$-complex is the {\em Salvetti complex} $S_\Gamma$.  This is the   non-positively curved (i.e. locally CAT(0)) cube complex with a single 0-cell,  one edge for each vertex of $\Gamma$, and one $k$-cube for each $k$-clique in $\Gamma$.  A general $\Gamma$-complex  $X$ is a certain type of non-positively curved cube complex which can be collapsed along hyperplanes to  produce the Salvetti complex.  A  {\em marking} is a  homotopy equivalence  $g\colon S_\Gamma\to X$ from a fixed standard Salvetti $S_\Gamma$ whose fundamental group we identify with $\AG,$  with the property that if $c\colon X\to S_\Gamma$ is a sequence of hyperplane collapses then the composition $c\circ g\colon S_\Gamma \to X\to S_\Gamma$    induces an element of $\UG$ on the level of fundamental groups.   The group $\UG$  acts on vertices $(g,X)$ of $K_\Gamma$ by changing the marking.    

Each $\Gamma$-complex $X$  is constructed using  a collection of pairwise-compatible  \GW-partitions (see \cite{CSV} for the construction; we will not need to know the details).  If we start with $X=S$ homeomorphic to $S_\Gamma$ and fix a marking $g\colon S_\Gamma\to S$, the empty collection corresponds to the marked Salvetti  $(g, S)$, and the partially ordered set of all  compatible collections of  \GW-partitions (ordered by inclusion) corresponds precisely to the star of $(g, S)$ in $K_\Gamma$.  In other words, each (ordered) compatible collection $(\WP_1,\ldots,\WP_k)$ corresponds to a $k$-simplex
$$\emptyset\subset \{{\WP_1}\}\subset\{{\WP_1,\WP_2}\}\subset\ldots\subset \{{\WP_1,\ldots,\WP_k}\}$$  
of the star; we abuse notation by writing  
 $$\emptyset\subset \WP_1\subset{\WP_1\WP_2}\subset\ldots\subset {\WP_1\WP_2\cdots\WP_k}.$$   The entire complex $K_\Gamma$ is the orbit of a single such star, so the dimension of $K_\Gamma$ is equal to the maximal size of a compatible collection of  \GW-partitions.   (Lemma~\ref{weak} shows that this size does not depend on whether one uses compatibility or weak compatibility.)

Since $Out(\AG)$ is known to have torsion-free subgroups of finite index,  the fact that $\UG$ acts properly on $K_\Gamma$ gives  
\begin{thm}\label{dimension}
The \vcd\ of $\UG$ is less than or equal to the maximal size of a compatible collection of \iGW-partitions.
\end{thm}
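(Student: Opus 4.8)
The plan is to deduce Theorem~\ref{dimension} as an essentially formal consequence of the three ingredients already assembled in Section~\ref{spine}: first, that $K_\Gamma$ is contractible (proved in \cite{CSV}); second, that $K_\Gamma$ is a simplicial complex whose top-dimensional simplices are exactly the maximal compatible collections of \GW-partitions, so that $\dim K_\Gamma$ equals the maximal size of such a collection; and third, that $\UG$ acts \emph{properly} on $K_\Gamma$. I would open by recalling the general principle: if a group $G$ acts properly on a contractible CW-complex $Z$ of dimension $d$, then any torsion-free subgroup $G_0\le G$ acts \emph{freely} on $Z$ (properness plus torsion-freeness kills stabilizers), so $Z/G_0$ is a $d$-dimensional $K(G_0,1)$, whence $\mathrm{cd}(G_0)\le d$. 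Taking $G=\UG$ and $Z=K_\Gamma$, and choosing $G_0$ to be a finite-index torsion-free subgroup — which exists because $Out(\AG)$ is virtually torsion-free (e.g.\ via the congruence subgroup in the image of $Out(\AG)\to GL(n,\Z)$, as recalled in the introduction) and $\UG\le Out(\AG)$ — gives $\mathrm{vcd}(\UG)\le\mathrm{cd}(G_0)\le\dim K_\Gamma$, and $\dim K_\Gamma$ is the claimed quantity.

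Concretely I would carry out the steps as follows. Step one: observe that $\UG$ is virtually torsion-free, fix such a subgroup $G_0$ of finite index, and note $\mathrm{vcd}(\UG)=\mathrm{vcd}(G_0)=\mathrm{cd}(G_0)$. Step two: recall from \cite{CSV} that the action of $\UG$ on $\mathcal O_\Gamma$ is proper and that $K_\Gamma\subset\mathcal O_\Gamma$ is an equivariant deformation retract, hence the action of $\UG$ on $K_\Gamma$ is proper and $K_\Gamma$ is contractible. Step three: properness means each cell-stabilizer is finite; since $G_0$ is torsion-free, every $G_0$-stabilizer of a simplex is trivial, so $G_0$ acts freely and simplicially, and the quotient $K_\Gamma/G_0$ is a CW-complex with contractible universal cover $K_\Gamma$, i.e.\ a $K(G_0,1)$. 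Step four: a $K(G_0,1)$ of dimension $d$ witnesses $\mathrm{cd}(G_0)\le d$; here $d=\dim K_\Gamma$. Step five: identify $\dim K_\Gamma$ with the maximal size of a compatible collection of \GW-partitions, using the description in Section~\ref{spine} that a compatible collection $(\WP_1,\dots,\WP_k)$ indexes a $k$-simplex of the star and that $K_\Gamma$ is the $\UG$-orbit of one such star; cite Lemma~\ref{weak} for the remark that this number is the same whether one uses compatibility or weak compatibility.

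The only genuinely substantive points are imported rather than proved here: contractibility of $\mathcal{O}_\Gamma$ and the retraction onto $K_\Gamma$, and properness of the action, both from \cite{CSV}; and virtual torsion-freeness of $\UG$, which follows from the analogous (standard) fact for $Out(\AG)$ together with $\UG\le Out(\AG)$. Given these, the argument is the standard $\mathrm{vcd}$-via-proper-action-on-a-contractible-complex packaging, and there is no real obstacle — the statement is designed to be the immediate dimension bound before the harder work (constructing matching free abelian subgroups, and analysing when $\dim K_\Gamma$ exceeds the principal rank) begins. If I wanted to be careful about one potential subtlety, it is the passage from "action on a simplicial complex" to "the quotient is a genuine CW $K(G_0,1)$": a free simplicial action need not be simplicial on the quotient if there are inversions of simplices, but since the $\UG$-action on $K_\Gamma$ comes from changing markings and preserves the poset structure it is order-preserving, so no inversions occur and the quotient is an honest CW-complex of the same dimension; I would note this in one line rather than belabour it.
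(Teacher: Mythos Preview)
Your proposal is correct and follows exactly the approach the paper takes: the paper simply notes that since $Out(\AG)$ is known to have torsion-free subgroups of finite index, the proper action of $\UG$ on the contractible complex $K_\Gamma$ immediately yields the bound. You have spelled out in considerably more detail what the paper leaves as a one-line citation of standard facts, but the underlying argument is identical.
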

\subsection{Cube complex structure of $K_\Gamma$} Note that any ordering of $\{{\WP_1,\ldots,\WP_k}\}$ gives a $k$-simplex in the star of $(g, S_\Gamma)$, and the union of all of these simplices forms a $k$-dimensional cube (see Figure~\ref{cube}). Thus $K_\Gamma$ in fact has the structure of a cube complex, with one $k$-dimensional cube for each compatible collection $\Pi=\{{\WP_1,\ldots,\WP_k}\}$, which we will denote $c(\emptyset, \Pi)$.    The faces of $c(\emptyset,\Pi)$ correspond to pairs $\Pi_1\subset \Pi_2$ of subsets of $\Pi$; in particular the maximal faces of are of the form $c(\emptyset,\Pi\setminus \{\WP\})$ and 
$c(\{\WP\},\Pi)$ for some $\WP\in \Pi$.   

\begin{figure}
\begin{center}  
\begin{tikzpicture}
\draw (0,0) to (2,0) to (2,2) to (0,2) to(0,0);
\draw (0,0) to (-.7,-.8); 
\draw (2,0) to (1.3,-.8);
\draw (2,2) to (1.3,1.2);
\draw (0,2) to (-.7,1.2);

\draw[densely dotted] (0,0) to (2,2);
\draw[densely dotted] (-.7,-.8) to (2,2);
\draw[densely dotted] (-.7,-.8) to (2,0);
\draw[densely dotted] (-.7,-.8) to (0,2);
\draw[densely dotted] (-.7,-.8) to (2,2);
\draw[densely dotted] (-.7,1.2) to (2,2);
\draw[densely dotted] (1.3,-.8) to (2,2);

\begin{scope} [xshift = -.7cm, yshift= -.8cm];
\draw (0,0) to (2,0) to (2,2) to (0,2) to(0,0);
\draw[densely dotted] (0,0) to (2,2);
\node [below left] (a) at (0,0) {$(g,S)=\emptyset$};
\node [left] (b) at (0,2) {$\WP_2$};
\node [below right] (c) at (2,0) {$\WP_1$};
\node [right] (d) at (2,2) {$\WP_1\WP_2$};
\end{scope}
\node [left] (a) at (0,0) {$\WP_3$};
\node [above] (b) at (0,2) {$\WP_2\WP_3$};
\node [right] (c) at (2,0) {$\WP_1\WP_3$};
\node [above right] (d) at (2,2) {$\WP_1\WP_2\WP_3$};

 \end{tikzpicture}
 \end{center}\caption{The cube $c(\emptyset,\WP_1\WP_2\WP_3)$ in the star of $(g,S)$ in $K_\Gamma$}\label{cube}
 \end{figure}
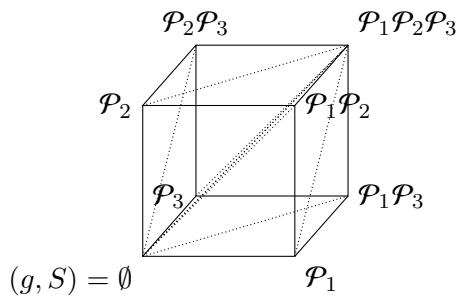

\section{Free abelian subgroups of $\UG$}\label{abelian}

In this section we relate the dimension of $K_\Gamma$ to  abelian subgroups of $Out(A_\Gamma)$ by constructing abelian subgroups freely generated by outer $\Gamma$-Whitehead automorphisms associated to compatible collections of \GW-partitions. We start by determining exactly when  two of these commute.

\subsection{Commuting   $\Gamma$-Whitehead automorphisms}

\begin{definition} Let  $v$ be a vertex of $\Gamma$.  A  \GW-partition $\WP$ {\em splits} $v$ if $v$ and $v^{-1}$ are in different sides of $\WP$.  
\end{definition}

\begin{thm}\label{commute}  Let $\phi(P,m)$ and $\phi(Q,n)$ be $\Gamma$-Whitehead automorphisms.   If $[m,n]=1$ then $\phi(P,m)$ commutes with $\phi(Q,n)$.  If $[m,n]\neq 1$  let $\WP=\{P|P^*| lk(P)\}$ and $\WQ=\{Q|Q^*|lk(Q)\}$ be the associated \iGW-partitions.  Then the outer automorphisms   $\varphi(\WP,m)$ and  $\varphi(\WQ,n)$  commute  if and only if $\WP$ and $\WQ$ are compatible, $\WQ$ does not split $m$ and $\WP$ does not split $n$.
\end{thm}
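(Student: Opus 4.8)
The plan is to verify directly that the relevant composites agree on every generator $v\in V$; by Lemma~\ref{GWautos} we may, when computing $\varphi(\WP,m)$ and $\varphi(\WQ,n)$, represent them by whichever of $\phi(P,m),\phi(P^*,m^{-1})$ and of $\phi(Q,n),\phi(Q^*,n^{-1})$ is convenient. Two facts will be used throughout: a \GW-subset $P$ based at $m$ is a union of $m$-inseparable sets, hence disjoint from $lk(m)^\pm$, so $\phi(P,m)$ fixes $st(m)^\pm$ pointwise; and for $v\notin\{m,m^{-1}\}$ one has $\phi(P,m)(v)=m^{a_v}\,v\,m^{-b_v}$, where $a_v=1$ exactly when $v^{-1}\in P$ and $b_v=1$ exactly when $v\in P$ (with $a_v=b_v=0$ on $lk(m)^\pm$). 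If $[m,n]=1$ then $n\in st(m)$ and $m\in st(n)$, so each of $\phi(P,m),\phi(Q,n)$ fixes the multiplier of the other; applying the two maps in either order to a generator $v$ and using that $m$ and $n$ commute, both composites send $v$ to $m^{a_v}n^{c_v}\,v\,n^{-d_v}m^{-b_v}$, so the two automorphisms are in fact equal.

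Now suppose $[m,n]\neq1$ and prove the ``if'' direction. Since the bases do not commute, condition (2) in the definition of compatibility is ruled out, so there are sides $P^\times\in\{P,P^*\}$ and $Q^\times\in\{Q,Q^*\}$ with $P^\times\cap Q^\times=\emptyset$. Let $m'\in\{m,m^{-1}\}$ be the element lying in $P^\times$ and $n'\in\{n,n^{-1}\}$ the one in $Q^\times$, and represent $\varphi(\WP,m)$ by $\phi(P^\times,m')$, $\varphi(\WQ,n)$ by $\phi(Q^\times,n')$. Viewing $\WP$ as based at $m'$ and $\WQ$ at $n'$, Lemma~\ref{useful} gives $P^\times\subseteq(Q^\times)^*$ and $Q^\times\subseteq(P^\times)^*$. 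Because $\WQ$ does not split $m$, the elements $m',m'^{-1}$ lie on a common side of $\WQ$, which by $m'\in P^\times\subseteq(Q^\times)^*$ must be $(Q^\times)^*$; hence $\phi(Q^\times,n')$ fixes $m'$, and symmetrically $\phi(P^\times,m')$ fixes $n'$. Writing $\phi(P^\times,m')(v)=(m')^{a_v}v(m')^{-b_v}$ and $\phi(Q^\times,n')(v)=(n')^{c_v}v(n')^{-d_v}$ (both maps fixing $v$ when $v\in\{m'^{\pm1},n'^{\pm1}\}$), the two composites applied to $v$ become
\[
(m')^{a_v}(n')^{c_v}\,v\,(n')^{-d_v}(m')^{-b_v}\qquad\text{and}\qquad(n')^{c_v}(m')^{a_v}\,v\,(m')^{-b_v}(n')^{-d_v},
\]
and these are equal in $\AG$ because $P^\times\cap Q^\times=\emptyset$ forces at most one of $a_v,c_v$ and at most one of $b_v,d_v$ to be nonzero, so on each side of $v$ only one of the two multipliers appears. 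Thus $\varphi(\WP,m)$ and $\varphi(\WQ,n)$ commute in $Out(\AG)$.

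For the converse I argue by contraposition. If $\WP$ splits $n$ \emph{and} $\WQ$ splits $m$, then $n$ (resp.\ $m$) is a singleton $m$-inseparable (resp.\ $n$-inseparable) set, so the abelianizations of $\phi(P,m)$ and $\phi(Q,n)$ in $GL(n,\Z)$ involve the non-commuting elementary matrices $E_{nm}$ and $E_{mn}$; since an inner automorphism acts trivially on $\Z^n$, the two outer automorphisms cannot commute. In the remaining failure modes --- exactly one of the two splitting conditions holds, or neither holds but $\WP$ and $\WQ$ are incompatible --- one exhibits a single generator $v$ on which $\phi(P,m)\phi(Q,n)$ and $\phi(Q,n)\phi(P,m)$ differ by more than conjugation by a fixed element: in the incompatible case take $v$ in a nonempty $P^\times\cap Q^\times$, so that $m$ and $n$ accumulate on the same side of $v$ in opposite orders; in a one-sided split case take a generator moved by the non-splitting map, whose image under the splitting map then acquires a stray power of the other multiplier on one side only. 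In each such case $v\notin lk(m)^\pm\cup lk(n)^\pm$, so applying the standard retraction of $\AG$ onto the (free) sub-right-angled-Artin-group on the two or three generators that actually occur and comparing cyclically reduced lengths shows the two resulting words are not conjugate; hence the commutator is not inner.

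The crux is this contrapositive direction: one has to organize the four side-choices together with the two multiplier signs so that every failure mode has a clean witnessing generator, and one cannot simply appeal to the abelianization, since a partial-conjugation component of a $\Gamma$-Whitehead automorphism acts trivially on $\Z^n$ --- so outside the ``both split'' case the obstruction must be detected at the level of conjugacy classes.
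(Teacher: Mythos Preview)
Your treatment of the case $[m,n]=1$ and of the ``if'' direction when $[m,n]\neq 1$ is correct and essentially the paper's argument, packaged neatly via the exponents $a_v,b_v,c_v,d_v$.  The abelianisation trick for the ``both split'' sub-case is also valid (though the images in $GL(n,\Z)$ are not single elementary matrices but products of them sharing a row; the commutator computation still goes through).

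The gap is in the remaining converse cases.  Your plan is to find a generator $v$ whose images under $\phi\psi$ and $\psi\phi$ are non-conjugate in $\AG$ and then detect this by retracting onto the free subgroup $\langle m,n,v\rangle$.  But such a $v$ need not exist.  Take the compatible ``one-split'' configuration $P\cap Q=\emptyset$, $m\in P$, $n\in Q$, $n^{-1}\in P^*$, $m^{-1}\in Q$; here $\phi\psi$ and $\psi\phi$ \emph{agree} on $m$ and on $n$, and for any $v\in P\setminus\{m\}$ with $v^{-1}\in P$ one gets $\phi\psi(v)=mvm^{-1}$ and $\psi\phi(v)=nmvm^{-1}n^{-1}$, both conjugate to $v$.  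If for example $P=\{m,a,a^{-1}\}$ there is no other choice of $v$, and every generator outside $P$ is fixed by $\phi$, so the two composites agree on it as well; your cyclic-length comparison gives nothing.  The paper handles this by first using the agreement on $m$ and $n$ to force any putative conjugating element $w$ to centralise both, hence to act trivially on the free subgroup $\langle m,n,v\rangle$, so that ``differ by inner'' becomes ``equal on $v$'' --- and then one checks $\phi\psi(v)\neq\psi\phi(v)$ directly.  The same issue recurs in the incompatible case when exactly three quadrants contain elements of $\{m^{\pm 1},n^{\pm 1}\}$: again $\phi\psi$ and $\psi\phi$ agree on $m$ and $n$, and one needs an auxiliary $v$ plus a further split on the location of $v^{-1}$.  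In short, your organisation is sound, but the last two headings conceal a small case analysis that you have described heuristically rather than carried out, and your non-conjugacy criterion is not by itself strong enough to finish.
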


\begin{proof}
If $m$ and $n$ commute, the automorphisms clearly commute, so we only need to consider the case that $m$ and $n$ do not commute.

Suppose first that $\WP$ and $\WQ$ are compatible.   Replacing $(P,m)$ by $(P^*,m^{-1})$ and/or $(Q,n)$ by $(Q^*,n^{-1})$ if necessary (which does not change $\varphi(\WP,m)$ or $\varphi(\WQ,n)$), then by the definition of compatibility we may assume that $P\cap Q=\emptyset, m\in P$ and $n\in Q.$

  If both $m^{-1}$ and $n^{-1}$ are in $P^*\cap Q^*$, then $\phi=\phi(P,m)$  affects only elements of $P$ and their inverses, and $\psi=\phi(Q,n)$  affects only elements of $Q$ and their inverses. In particular  $\phi$ fixes $n$ and $\psi$ fixes $m$.   If $x\in P$ and  $x\inv\in Q$ then $\phi$ and $\psi$ act on opposite sides of $x$.  It follows that  $\phi\psi(x)=\psi\phi(x)$ for all generators $x$.

 If  $n^{-1}\in P$ and $m^{-1}\in Q$, then    $\phi\psi(m)= mnm$ 
   while $\psi\phi(m)= nm$.  Since these are not conjugate,    $\phi\psi$ and $\psi\phi$ do not differ by an inner automorphism, i.e. they do not commute as outer automorphisms.

  If $n^{-1}\in P^*$ but $m^{-1}\in Q$, then $\phi \psi(n)=n=\psi\phi(n)$ and
   $\phi \psi(m)=nm=\psi\phi(m)$ so we need a different argument to show that $\phi$ and $\psi$ do not commute.  Since $P$ must have at least two elements, there is $v\in P$ with $v\neq m$: 
    \begin{center}
    \begin{tikzpicture}[xscale=1.5]
\draw [rounded corners, fill=blue!15] (0,0) to (0,1) to (1.5,1) to (1.5,0)   --cycle; %%P
\draw [ rounded corners, fill=blue!15] (2,0) to (2,1) to (3.5,1) to (3.5,0)   --cycle;%% Q
\node (P) at (.75,.75) {$P$};
\node (m) at (.3,.3) {$m$};
\node (v) at (1.2,.35) {$v$};
\node (im) at (2.25,.3) {$n$};
\node (Q) at (2.65,.75) {$Q$};
\node (n) at (3.2,.35) {$m^{-1}$};
\node (in) at (1.75,-.4) {$n^{-1}$};
\end{tikzpicture}
\end{center}
Since $P\subset Q^*$ by Lemma~\ref{useful}, $v$ does not commute with $m$ or $n$, so $v,m$ and $n$ generate a free group of rank three.  
Since $\phi\psi$ and $\psi\phi$ agree on two generators of this free group, they differ by an inner automorphism if and only if they are equal.    

The effects of $\phi \psi$ and $\psi\phi$ on $v$ are determined by the position of $v^{-1}$: 
\begin{itemize}
\item If $v^{-1}\in P^*\cap Q^*$ then $\phi\psi(v)= vm^{-1}$ and $\psi\phi(v) = vm^{-1}n^{-1}$. 
\item  If $v^{-1}\in Q$ then $\phi\psi(v)= nvm^{-1}$ and $\psi\phi(v) = nvm^{-1}n^{-1}$
\item  If $v^{-1}\in P$ then $\phi\psi(v)= mvm^{-1}$ and $\psi\phi(v) = nmvm^{-1}n^{-1}$
\end{itemize} 
Thus   in all cases,  $\phi\psi$ does not differ from  $\psi\phi$ by an inner automorphism.  
 
This argument applies also to the symmetric case $n^{-1}\in P$ but $m^{-1}\in Q^*$.
  
It remains to consider the possibility that $\WP$ and $\WQ$ are not compatible.  In this case all four quadrants $P\cap Q$, $P\cap Q^*$, $P^*\cap Q$ and $P^*\cap Q^*$ are non-empty.   Using Lemma~\ref{GWautos} we may replace $(P,m)$ by $(P^*,m\inv)$ (which does not change $\varphi(\WP,m)$)  or by  $(P\setminus\{m\}\cup\{m\inv\},m\inv)$  (which replaces $\varphi(\WP,m)$ by its inverse), and similarly replace $(Q,n)$ if necessary,   to obtain one of the following configurations:

\begin{itemize} 
\item If each quadrant contains an element of $\{m,m\inv,n,n\inv\}$, then we may assume $m\in P\cap Q^*, n\in P\cap Q$, $m\inv\in P^*\cap Q$ and $n\inv\in P^*\cap Q^*$. Then  $\phi\psi(n)=nm^{-1}$ and  $\psi\phi(n)=nm^{-1}n^{-1}$ are not conjugate in $\AG$, so $\phi\psi$ and $\psi\phi$ do not differ by an inner automorphism.  

\item If exactly two quadrants contain elements of $\{m,m\inv,n,n\inv\}$, then we may assume $m,n\inv\in P\cap Q^*$ and $n,m\inv\in P^*\cap Q$ so $\phi\psi(m)=nm$ and $\psi\phi(m)=mnm,$  which are not conjugate in $\AG$. 

\item If exactly 3 quadrants contain elements of $\{m,m\inv,n,n\inv\}$ then we may assume $m\in P\cap Q^*, n\in P^*\cap Q$,    $m\inv\in P^*\cap Q^*,$  and either $n\inv\in P^*\cap Q^*$ or $n\inv\in P\cap Q^*$. For either position of $n\inv$ we have $\phi\psi(m)=\psi\phi(m)$ and $\phi\psi(n)=\psi\phi(n)$.   Now $P\cap Q$ does not contain any element of $\{m,m\inv,n,n\inv\}$ but it cannot be empty, so let $v\in P\cap Q$.  Note that $v$ cannot  commute with $m$ or $n$, so $m,n$ and $v$ are the basis of a free subgroup of $\AG$.  Therefore if $\phi\psi$ is conjugate to $\psi\phi$ we must have $\phi\psi(v)=\psi\phi(v)$. A calculation now shows that this is not the case for any position of $v^{-1}$.    
\end{itemize}
 \end{proof}

\begin{cor}\label{conjugation}
If    $\Gamma$-Whitehead automorphisms $\phi(P,m)$ and $\phi(Q,n)$ commute as outer automorphisms, then $\phi(P,m)$ acts on $n$ either trivially or as conjugation by $m$.
\end{cor}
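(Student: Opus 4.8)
The plan is to deduce this directly from Theorem~\ref{commute}, splitting into two cases according to whether the multipliers $m$ and $n$ commute and then reading off the piecewise formula defining $\phi(P,m)$. Suppose first that $[m,n]=1$. Then $n\in lk(m)$, so neither $n$ nor $n^{-1}$ is a vertex of $\Gamma-lk(m)$, and hence neither lies in any $m$-inseparable subset. Since $P$ is by definition a union of elements of $\bI(m)$, we get $n,n^{-1}\notin P$, so $n$ falls into the ``otherwise'' clause of the formula for $\phi(P,m)$ and is fixed. As conjugation by $m$ also fixes $n$ in this case, the conclusion holds. (Note that in this case we do not even use the commuting hypothesis; by Theorem~\ref{commute} the two automorphisms automatically commute when $[m,n]=1$.)

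Now suppose $[m,n]\neq1$. Since $\phi(P,m)$ is a $\Gamma$-Whitehead automorphism, $P$ is based at $m$, so $m\in P$, and therefore $\varphi(\WP,m)$ is the outer class of $\phi(P,m)$; likewise $\varphi(\WQ,n)$ is the outer class of $\phi(Q,n)$. Thus the assumption that $\phi(P,m)$ and $\phi(Q,n)$ commute as outer automorphisms is exactly the hypothesis of Theorem~\ref{commute}, which tells us in particular that $\WP$ does not split $n$, i.e. $n$ and $n^{-1}$ lie in a common part of $\WP=\{P|P^*|lk(m)^\pm\}$. Since $m$ and $n$ do not commute we have $n\notin lk(m)^\pm$ and $n^{-1}\notin lk(m)^\pm$, so that common part is either $P$ or $P^*$. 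If $n,n^{-1}\in P$, the formula gives $\phi(P,m)(n)=mnm^{-1}$; if $n,n^{-1}\in P^*$, it gives $\phi(P,m)(n)=n$. Either way $\phi(P,m)$ acts on $n$ trivially or as conjugation by $m$, as claimed.

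There is essentially no substantial obstacle here: all the real content sits in Theorem~\ref{commute}, and the corollary only requires unwinding the definitions and the piecewise formula for $\phi(P,m)$. The one point that needs a little care is the second case, where one must first observe that $n,n^{-1}\notin lk(m)^\pm$ before invoking ``$\WP$ does not split $n$'', so that this condition genuinely forces $n$ and $n^{-1}$ into one of the two sides $P,P^*$ rather than allowing both to lie in $lk(m)^\pm$.
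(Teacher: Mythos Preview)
Your proof is correct and follows exactly the paper's approach, which simply says the corollary is immediate from Theorem~\ref{commute} and the definition of $\phi(P,m)$. One tiny edge case you should patch in the first paragraph: if $m$ and $n$ correspond to the same vertex then $n\notin lk(m)$, but the ``otherwise (including $v=m^{\pm1}$)'' clause in the formula for $\phi(P,m)$ still gives $\phi(P,m)(n)=n$, so the conclusion is unaffected.
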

\begin{proof} This is immediate from Theorem~\ref{commute} and the definition of $\phi(P,m)$.
\end{proof}

Let $m\in V^\pm$ and let  $\WP=(P,P^*,lk(P))$ be a \GW-partition based at $m$.     We define the {\em $m$-length of $\WP$} to be   the number of $m$-inseparable subsets in the side  of $\WP$ containing $m$.  
\bigskip
\begin{lem}
\label{length}Let $m\in V^\pm$ and let  $\WP$
  and $\WQ$ be distinct \iGW-partitions based at  $m$, with $m$-length$(\WP)=m$-length$(\WQ)$. Then $\WP$ and $\WQ$ are incompatible.
\end{lem}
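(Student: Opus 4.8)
The plan is to reduce the lemma to an elementary counting statement about subsets of $\bI(m)$. Since $\WP$ and $\WQ$ are both based at $m$, the vertex $m$ is isolated in $\Gamma-lk(m)$, and the elements of $\bI(m)$ form a partition of $V^\pm\setminus lk(m)^\pm$: distinct $m$-inseparable subsets are pairwise disjoint, and their union is $(V\setminus lk(m))^\pm$. Each side of $\WP$, and of $\WQ$, is by definition a union of elements of $\bI(m)$, and by pairwise disjointness such a union is a \emph{disjoint} union of $m$-inseparable subsets in a unique way. So the first step is to record the correspondence sending a \GW-partition based at $m$ to the subset $S\subseteq\bI(m)$ consisting of those $m$-inseparable subsets contained in the side that contains $m$; under this correspondence $\{m\}\in S$, $\{m^{-1}\}\notin S$, and the $m$-length of the partition equals $|S|$.

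Label the sides so that $m\in P$ and $m\in Q$ (this does not change the partitions $\WP,\WQ$), and let $S_{\WP},S_{\WQ}\subseteq\bI(m)$ be the corresponding subsets. The second step translates the failure of clause (1) in the definition of compatibility, i.e. the statement that all four ``quadrants'' $P^\times\cap Q^\times$ are nonempty, into the assertion that $S_{\WP}\cap S_{\WQ}$, $S_{\WP}\setminus S_{\WQ}$, $S_{\WQ}\setminus S_{\WP}$ and $\bI(m)\setminus(S_{\WP}\cup S_{\WQ})$ are all nonempty; this equivalence is immediate from the partition property. Now $\{m\}$ lies in $S_{\WP}\cap S_{\WQ}$ and $\{m^{-1}\}$ lies in $\bI(m)\setminus(S_{\WP}\cup S_{\WQ})$, so two of the quadrants are automatically nonempty. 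For the other two, the hypotheses give $S_{\WP}\neq S_{\WQ}$ and $|S_{\WP}|=|S_{\WQ}|$, and two distinct subsets of equal size cannot be nested, so $S_{\WP}\setminus S_{\WQ}$ and $S_{\WQ}\setminus S_{\WP}$ are nonempty too. Hence clause (1) fails.

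Finally I would dispose of clause (2) of compatibility: since $\WP$ and $\WQ$ are both based at the same vertex $m$, clause (2) would require $[m,m]=1$ (true) together with $st(m)\neq st(m)$ (false), and the weak-compatibility variant would require $m\neq m$, also false; so clause (2) cannot hold either. Thus $\WP$ and $\WQ$ satisfy neither alternative in the definition of compatibility, and are therefore incompatible. I do not expect a genuine obstacle here; the only point requiring care is the bookkeeping in the first step — checking that ``union of $m$-inseparable subsets'' really is honest disjoint-union data, so that the $m$-length is $|S|$ and the quadrants become Boolean combinations of $S_{\WP}$ and $S_{\WQ}$ — after which the proof is just the pigeonhole remark on nested sets of equal size.
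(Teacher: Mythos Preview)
Your proof is correct and follows essentially the same approach as the paper's: both arguments observe that the sides of $\WP$ and $\WQ$ containing $m$ correspond to distinct equal-size subsets of $\bI(m)$, so neither contains the other and all four quadrants are nonempty, ruling out clause~(1), while clause~(2) is vacuous since both partitions share the base $m$. The paper compresses this into two sentences, whereas you spell out the bookkeeping about disjoint unions and the pigeonhole step explicitly.
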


\begin{proof}The sides of $\WP$ and $\WQ$ containing $m$  are unions of elements of $\bI(m)$.  If they have the same $m$-length but are different,  then all sides of $\WP$ and $\WQ$ must intersect non-trivially.   
\end{proof}

\begin{lem}\label{nest} Let $m\in V^\pm$ and let $ \WP_1,\ldots,\WP_k$ be pairwise-compatible \iGW-partitions based at  $m$.  Let $P_i$ be the side of $\WP_i$ that contains $m$.  Then after reordering we may assume $P_1\subset P_2\subset\ldots\subset P_k$.  
 \end{lem}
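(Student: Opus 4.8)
The plan is to leverage the $m$-length function together with Lemma~\ref{length} to force the sides containing $m$ into a chain. First I would observe that for a \GW-partition $\WP_i$ based at $m$, the side $P_i$ containing $m$ is a union of elements of $\bI(m)$, and its $m$-length is precisely the number of such elements it contains; since $P_i$ must be a proper, thick piece, this $m$-length lies strictly between $1$ and $|\bI(m)|-1$. I would then argue that the $m$-lengths of $\WP_1,\dots,\WP_k$ are pairwise distinct: if $\WP_i$ and $\WP_j$ had equal $m$-length but were different partitions, Lemma~\ref{length} would say they are incompatible, contradicting pairwise compatibility. Hence after relabeling we may assume $m\text{-length}(\WP_1)<m\text{-length}(\WP_2)<\dots<m\text{-length}(\WP_k)$.

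Next I would show that with this ordering we in fact get the nesting $P_1\subset P_2\subset\dots\subset P_k$. Take $i<j$, so $m\text{-length}(\WP_i)<m\text{-length}(\WP_j)$. By compatibility there is a choice of sides $P_i^\times\in\{P_i,P_i^*\}$ and $P_j^\times\in\{P_j,P_j^*\}$ with $P_i^\times\cap P_j^\times=\emptyset$; note here that the base $m$ is common, so the commuting-base clause of compatibility does not apply and we genuinely get an empty intersection of sides. The key point is that $P_i^\times$ cannot be disjoint from the side of $\WP_j$ with strictly larger $m$-length unless it is the one with smaller $m$-length: since all four sides are nonempty unions of elements of $\bI(m)$ and the two sides of each partition together exhaust $V^\pm\setminus lk(m)^\pm$, disjointness $P_i^\times\cap P_j^\times=\emptyset$ forces $P_i^\times$ to be contained in the complementary side of $\WP_j$, i.e. $P_i^\times\subseteq (P_j^\times)^c\cap(V^\pm\setminus lk(m)^\pm)$, which is the other side of $\WP_j$. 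One then checks on the level of $m$-lengths which of the four sign-combinations is consistent with $m\text{-length}(\WP_i)<m\text{-length}(\WP_j)$: writing $\ell_i$ for the $m$-length of the $m$-side of $\WP_i$ and $L$ for $|\bI(m)|$, the four containment possibilities give $\ell_i\le L-\ell_j$, $\ell_i\le\ell_j$, $L-\ell_i\le L-\ell_j$, or $L-\ell_i\le\ell_j$; exactly the configuration yielding $P_i\subseteq P_j$ is forced to be available for all pairs simultaneously once we also use transitivity/consistency across the whole collection. I would make this precise by first handling adjacent pairs and then noting that a nested chain among the $m$-sides is the only way all $\binom{k}{2}$ compatibilities can hold together.

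A cleaner way to run the last step, which I would actually write up: for each $i$ replace $(P_i,m)$ if necessary so that $m$ lies in the side of smaller $m$-length is not automatic, so instead argue directly. Given $i<j$, among the four sides of $\WP_i,\WP_j$ the compatible disjoint pair must consist of the two sides of \emph{smaller} total size whenever the partitions are ``nested in $m$-length'', because two sides that are disjoint and whose $m$-lengths sum to more than $L=|\bI(m)|$ would have to overlap. Concretely $P_i^\times\cap P_j^\times=\emptyset$ forces $m\text{-length}(P_i^\times)+m\text{-length}(P_j^\times)\le L$, and combined with $m\text{-length}(P_i)+m\text{-length}(P_i^*)=L$ (likewise for $j$) this pins down that the disjoint pair is $\{$smaller $m$-side of $\WP_i$, complement of the smaller $m$-side of $\WP_j\}$ or a symmetric variant; unwinding, $P_i\subset P_j$ with $m$ in both. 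Doing this for all pairs and using that the $m$-lengths are totally ordered yields the single chain $P_1\subset P_2\subset\dots\subset P_k$.

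I expect the main obstacle to be bookkeeping the sign choices: compatibility only gives \emph{some} pair of disjoint sides, and one must verify that the resulting local disjointness relations for all pairs can be simultaneously oriented so that it is always the $m$-containing sides that nest (rather than, say, $P_1\subset P_2$ but $P_3\subset P_2^*$). The resolution is exactly that the $m$-side of each $\WP_i$ is canonically determined (it is the unique side containing $m$, there is no freedom once we fix $m\in V^\pm$), so the disjointness $P_i^\times\cap P_j^\times=\emptyset$ together with the $m$-length inequality $\ell_i<\ell_j$ leaves only one admissible sign pattern, namely $P_i\subseteq P_j$. Once that rigidity is established, transitivity of $\subseteq$ gives the full chain and the lemma follows; Lemma~\ref{length} is what guarantees the strictness needed to make the inequalities, not just the containments, line up.
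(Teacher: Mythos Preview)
Your argument has a genuine gap at the crucial step. You assert that the disjointness $P_i^\times\cap P_j^\times=\emptyset$ together with $\ell_i<\ell_j$ ``leaves only one admissible sign pattern, namely $P_i\subseteq P_j$.'' This is false. Write $L=|\bI(m)|$. The four possible empty intersections translate into the inequalities $\ell_i\le L-\ell_j$, $\ell_i\le\ell_j$, $\ell_j\le\ell_i$, and $L-\ell_j\le\ell_i$, respectively. The assumption $\ell_i<\ell_j$ rules out only the third. For instance with $L=6$, $\ell_i=2$, $\ell_j=5$ the fourth inequality $1\le 2$ also holds, so $P_i^*\cap P_j^*=\emptyset$ (equivalently $P_j^*\subseteq P_i$) is perfectly consistent with the length data, yet it does \emph{not} give $P_i\subseteq P_j$. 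Your appeal to ``transitivity/consistency across the whole collection'' to repair this is vague and does not obviously work; already for $k=2$ the length argument alone fails to pin down the containment.

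The paper's proof avoids the length function entirely and is one line: since $m\in P_i\cap P_j$ and $m^{-1}\in P_i^*\cap P_j^*$, two of the four quadrants are nonempty, so compatibility forces $P_i\cap P_j^*=\emptyset$ or $P_i^*\cap P_j=\emptyset$, i.e.\ $P_i\subseteq P_j$ or $P_j\subseteq P_i$. This gives a total order on $\{P_1,\dots,P_k\}$ by inclusion, and one simply relabels. You had all the ingredients for this---you note that the $m$-side is canonically the side containing $m$, and that the commuting-base clause is unavailable---but you never used the element $m$ (and $m^{-1}$) to kill the two bad quadrants directly. Once you do, Lemma~\ref{length} is not needed at all; strictness of the chain follows just from the $\WP_i$ being distinct.
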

  \begin{proof} For each $i\neq j$, $P_i\cap P_j$ contains $m$, so is not empty, and  $P_i^*\cap P_j^*$ contains $m^{-1}$, so is not empty.    Therefore, by compatibility, either $P_i\cap P_j^*=\emptyset$, which implies $P_i\subset P_j$, or $P_j\cap P_i^*=\emptyset$, which implies $P_j\subset P_i$. Therefore we can renumber  the $P_i$ in order of size to obtain $P_1\subset\cdots\subset P_k.$  
\end{proof}
\begin{figure}
\begin{center}
 \begin{tikzpicture}
 \node (v) at (-1,.7) {$m^{-1}$};
 \node (v) at (-1,.2) {$u$};
\draw [rounded corners] (0,0) to (0,1) to (1.4,1) to (1.4,0)   --cycle; 
\node (P1) at (1,.7) {$P_1$};
\node (m) at (.4,.5) {$m$};%%P_1
\draw [rounded corners] (-.1,-.1) to (-.1,1.1) to (3,1.1) to (3,-.1)   --cycle; 
\node (Pi1) at (2.5,.7) {$P_{i-1}$};
\node (dots) at (1.8,.5) {$\ldots$};
\draw [rounded corners] (-.2,-.2) to (-.2,1.2) to (3.9,1.2) to (3.9,-.2)   --cycle; 
\node (Pi) at (3.4,.7) {$P_{i}$};
\node (ui) at (3.5,.2) {$u^{-1}$};
\draw [rounded corners] (-.3,-.3) to (-.3,1.3) to (5.8,1.3) to (5.8,-.3)   --cycle; 
\node (Pk) at (5.3,.7) {$P_{k-1}$};
\node (dots) at (4.3,.5) {$\ldots$};
\draw [rounded corners] (-.4,-.4) to (-.4,1.4) to (6.6,1.4) to (6.6,-.4)   --cycle; 
\node (Pk) at (6.2,.7) {$P_{k}$};
\node (v) at (6.2,.1) {$v$};
\end{tikzpicture}
\end{center}
\caption{Proof of Proposition~\ref{independent}}\label{fig:independent}
\end{figure}
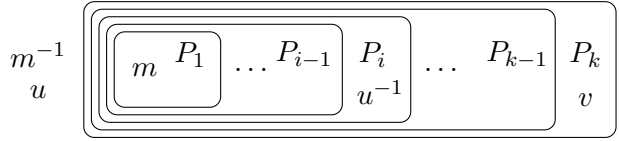

\begin{prop}
\label{independent}
 Let $m\in V^\pm$ and suppose $\WP_1,\ldots,\WP_k$ are pairwise compatible  \iGW-partitions based at $m$.    Then the subgroup of $\UG$ generated by the $ \varphi{(\WP_i,m)}$ is free abelian of rank $k$.
\end{prop}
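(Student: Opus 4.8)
The plan is to show first that the subgroup is abelian, and then that it is free abelian of the stated rank by exhibiting an explicit homomorphism to $\Z^k$ and checking it is injective on this subgroup.

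First, by Lemma~\ref{nest} we may reorder so that the sides $P_i$ containing $m$ satisfy $P_1\subset P_2\subset\cdots\subset P_k$. Since each $\WP_i$ is based at $m$ and is pairwise compatible with the others, no $\WP_i$ splits $m$ (the vertex $m$ and $m^{-1}$ lie in $P_i$ and $P_i^*$ respectively, which are the same sides for all $i$ up to the nesting; in particular $m\in P_i$ and $m^{-1}\in P_i^*$ for every $i$, so $\WP_j$ does not split the base $m$ of $\WP_i$). Hence Theorem~\ref{commute} applies directly and each pair $\varphi(\WP_i,m)$, $\varphi(\WP_j,m)$ commutes. (One should be slightly careful that Theorem~\ref{commute} as stated concerns partitions based at \emph{different} vertices that do not commute; when the bases are literally the same vertex $m$, one can instead argue directly: with $P_i\subset P_j$, the automorphism $\phi(P_i,m)$ only moves generators whose corresponding vertex lies in $P_i$ or $P_i^*$, and a direct computation using the nesting shows $\phi(P_i,m)\phi(P_j,m)$ and $\phi(P_j,m)\phi(P_i,m)$ agree on all generators, so they commute already in $Aut(\AG)$, not merely in $\UG$.) So the subgroup $H$ generated by the $\varphi(\WP_i,m)$ is abelian.

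Next I would produce a surjection $H\to\Z^k$. For each $i$, consider the quantity that counts, for a generator $x$ of $\AG$, how many times $m^{\pm1}$ appears inside the (cyclically reduced) normal form of the image of $x$ — more precisely I would use the nesting $P_1\subset\cdots\subset P_k$ to pick, for each $i$, a "witness'' generator $v_i$ whose corresponding vertex lies in $P_i\setminus P_{i-1}$ (with $P_0=lk(P)\cup\{m^{-1}\text{-side complement}\}$, i.e. $v_1$ can be taken with vertex in $P_1$ itself), so that $v_i$ is moved nontrivially by $\phi(P_i,m)$ and its conjugates but the way it is moved "detects'' the $i$-th coordinate. Concretely, applying a word $\prod_i \varphi(\WP_i,m)^{a_i}$ to $v_i$ and abelianizing, or counting exponent sum of $m$, gives a linear form in the $a_j$ for $j\geq i$ (or $j$ in some triangular pattern) whose leading coefficient is $a_i$; running over $i=1,\dots,k$ gives a triangular, hence invertible, integer matrix. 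This shows the map $\Z^k\to H$, $(a_1,\dots,a_k)\mapsto\prod\varphi(\WP_i,m)^{a_i}$, is injective, and it is surjective by definition of $H$.

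The main obstacle I expect is the bookkeeping in this last step: one must track how the $\Gamma$-Whitehead automorphisms — which may be mixtures of folds and partial conjugations — act on the witness generators $v_i$, and verify that the resulting "detecting'' functional really is triangular with $\pm1$ on the diagonal rather than being corrupted by the partial-conjugation parts or by cancellation in $\AG$. Using the $m$-exponent sum (total exponent of $m$ in the normal form, which is insensitive to conjugation by $m$) should make partial conjugations harmless, so that the computation reduces to the free-group case $A_\Gamma=F_n$ inside the subgroup generated by $m$ and the chosen $v_i$, where Whitehead's classical computation (or a direct induction on the chain length $k$) finishes it. Everything else — commutativity and surjectivity — is routine given Lemma~\ref{nest} and Theorem~\ref{commute}.
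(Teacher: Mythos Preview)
Your overall strategy---nest the sides via Lemma~\ref{nest}, verify commutativity, then detect the exponents $a_i$ using witness vertices $v_i\in P_i\setminus P_{i-1}$---matches the paper's approach closely. The commutativity step is fine (and in fact Theorem~\ref{commute} does apply with $m=n$, since $[m,m]=1$; but your direct check is also correct).

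The gap is in the independence step, precisely at the point you flagged as the ``main obstacle''. Your proposed detector is the $m$-exponent sum in the image of $v_i$, which is conjugacy-invariant and hence well-defined on $\UG$. But this invariant can vanish identically. If a layer $P_i\setminus P_{i-1}$ consists of a single $m$-inseparable set of the form $C^\pm$ (a component of $\Gamma\setminus lk(m)$ with at least two vertices), then every choice of $v_i$ in that layer has $v_i^{-1}$ in the same layer. The action of $\prod_j\phi(P_j,m)^{a_j}$ on such a $v_i$ is pure conjugation $v_i\mapsto m^Av_im^{-A}$ with $A=\sum_{j\ge i}a_j$, so the $m$-exponent sum is $0$ regardless of the $a_j$. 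Your triangular matrix then has a zero on the diagonal and is not invertible. Saying ``partial conjugations are harmless'' is exactly backwards here: rendering them invisible is what kills the argument.

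The paper repairs this with one extra move that you are missing: before looking at the layer witnesses, it picks $u\in P_k^*$ with $u\neq m^{-1}$ (such $u$ exists by thickness) and computes that the image of $u$ under $g=\prod\phi_i^{n_i}$ is either $u$ or $m^au$; since $g$ is assumed inner, $g(u)$ is conjugate to $u$, forcing $a=0$. This shows $g$ is not merely inner but the \emph{identity} in $Aut(\AG)$. Now one can work with honest equations: for $v\in P_k\cap P_{k-1}^*$ the equation $g(v)=v$ reads either $vm^{-n_k}=v$ or $m^{b+n_k}vm^{-n_k}=v$, and since $v$ does not commute with $m$ both force $n_k=0$. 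Peeling off layers finishes. So the missing idea is ``upgrade inner to identity first'', after which the left and right $m$-exponents (not just their sum) become available.
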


\begin{proof} 
Let  $P_1\subset \cdots\subset P_k$  be the sides of the $\WP_i$ that contain  $m$ as in  Lemma \ref{nest} (see  Figure~\ref{fig:independent}), and let $\phi_i=\phi(P_i,m).$ Suppose $g=\phi_1^{n_1}\ldots \phi_k^{n_k} $ is inner, and let $u\in P_k^*, u\neq m^{-1}$.  Then
\[
g(u)=\left\{\begin{array}{ll} 
u& \mbox{ if } u^{-1}\in P_k^* \\
m^{a}u & \mbox{ if } u^{-1}\in P_k
\end{array}\right.
\]
where $a=\sum_{\ell=i}^k n_\ell$ if $u^{-1}\in P_i\cap P_{i-1}^*$.   
Since $g(u)$ is conjugate to $u$, we must have $a=0$, i.e. $g(u)=u$ in all cases, so  $g$ is not just inner, but is actually the identity.  %\[
Now let $v\in P_k\cap P_{k-1}^*$.  
Then
\[
g(v)=\left\{\begin{array}{ll} 
vm^{-n_k}& \mbox{ if } v^{-1}\in P_k^*\\
m^{b} m^{n_k}v m^{-n_k}& \mbox{ if } v^{-1}\in P_k
\end{array}\right.
\]
where $b=0$ if $v^{-1}\in P_k\cap P_{k-1}^*$ and  $b=\sum_{\ell=j}^{k-1}n_\ell$ if $v^{-1}\in P_j\cap P_{j-1}^*$ for some $j<k$.  Since $g=id$, this implies $n_k=0$ in all cases.  Repeating this argument with $P_1\subset\ldots\subset P_r$ for each $r<k$ gives $n_r=0$ for all $r$.  
\end{proof}

\begin{prop}\label{dependent} 
Let $\{\WP_1,\ldots,\WP_n\}$ be a maximal compatible collection of    \iGW-partitions based at $m$.  Suppose $\WQ$ is another  \iGW-partition based at $m$.  Then $\varphi{(\WQ,m)}$ is in the subgroup  $G$ of $\UG$ generated by the $\varphi{(\WP_i,m)}$.
\end{prop}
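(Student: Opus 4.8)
The plan is to analyze the poset of $m$-inseparable subsets and reduce the claim to a purely combinatorial statement about $m$-lengths. First I would observe, using Lemma~\ref{length}, that pairwise-compatible \iGW-partitions based at $m$ are completely controlled by their $m$-length: any two distinct compatible partitions based at $m$ must have different $m$-lengths (since the sides containing $m$ are nested, by Lemma~\ref{nest}, and equal $m$-length with nesting forces equality). So I would order $\{\WP_1,\ldots,\WP_n\}$ so that the sides $P_1\subsetneq P_2\subsetneq\cdots\subsetneq P_n$ containing $m$ have strictly increasing $m$-length, and similarly let $Q$ be the side of $\WQ$ containing $m$. I want to show $\varphi(\WQ,m)$ lies in $G = \langle \varphi(\WP_1,m),\ldots,\varphi(\WP_n,m)\rangle$.

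The key step is to show that maximality of the collection, together with compatibility constraints, forces the $P_i$ to be a \emph{maximal chain} in the poset of \iGW-subsets based at $m$ ordered by inclusion — i.e. $P_{i+1}$ is obtained from $P_i$ by adjoining exactly one element of $\bI(m)$, and $P_1 = \{m\}$, and $P_n = V^\pm \setminus lk(m)^\pm \setminus \{m^{-1}\}$ (or the whole thing minus one inseparable singleton). If this were not so, there would be an \iGW-subset $R$ with $P_i \subsetneq R \subsetneq P_{i+1}$ for some $i$, or $R$ properly between $\{m\}$ and $P_1$, etc.; such an $R$ is compatible with every $P_j$ (its side-intersections with each $P_j$ are empty on the appropriate side by the nesting), contradicting maximality. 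Once the $P_i$ form a maximal chain, the side $Q$ of $\WQ$ — being a union of elements of $\bI(m)$ containing $m$ but not $m^{-1}$ — satisfies $P_{i} \subseteq Q \subseteq P_{i+1}$ for the index $i$ with $m$-length$(P_i) \le m$-length$(Q) \le m$-length$(P_{i+1})$, and since consecutive $P$'s differ by one inseparable set, $Q$ equals either $P_i$ or $P_{i+1}$, hence $\WQ = \WP_i$ or $\WP_{i+1}$ and $\varphi(\WQ,m) = \varphi(\WP_i,m)$ or $\varphi(\WP_{i+1},m)$, which is in $G$ trivially.

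Wait — I should be more careful, because an \iGW-partition is the data $\{P|P^*|lk(m)^\pm\}$ and different $Q$'s of the same size can exist if $lk(Q) \ne lk(P)$; but the thickness requirement and the fact that all $P_i$ share the ambient $lk(m)^\pm$ means $Q$ lives in the same $2^{\bI(m)}$-lattice as the $P_i$, so the comparison is legitimate. The only subtlety is whether $Q$ sits \emph{between} two consecutive $P_i$'s or whether its $m$-length coincides with some $P_i$'s but $Q \ne P_i$ — but the latter is impossible once we know the $P_i$ form a maximal chain, since then each $m$-length value is attained by a unique \iGW-subset built from the chain, and a distinct $Q$ of that length would give, intersecting with the chain, a refinement contradicting maximality (it would be compatible with all $P_j$ except possibly the one of equal length, but by Lemma~\ref{length} those two are incompatible — so actually I need: maximality shows no \iGW-subset of $m$-length equal to any $m$-length$(P_i)$ can differ from $P_i$; equivalently, among \iGW-subsets based at $m$, for each achievable $m$-length there is exactly one built from the chain, which is forced because the inseparable sets adjoined along the chain must be "all of them" in the right order). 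The main obstacle is thus establishing cleanly that a maximal pairwise-compatible collection of \iGW-partitions based at $m$ corresponds exactly to a maximal chain $\{m\} = P_0 \subsetneq P_1 \subsetneq \cdots \subsetneq P_n$ in the inclusion-lattice of \iGW-subsets — i.e. that you cannot "skip" an inseparable set — and then reading off that every \iGW-partition based at $m$ appears in the chain. I expect the bookkeeping about the two endpoint cases ($P_1$ not necessarily $\{m\}$ if $\{m\}$ is not a valid side, and $P_n$ possibly omitting an inseparable singleton) to be the fiddly part, but none of it should be deep.
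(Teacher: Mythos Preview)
Your argument has a genuine gap. You correctly show that maximality forces the sides $P_1\subsetneq\cdots\subsetneq P_n$ containing $m$ to form a maximal chain, so that (with $P_0=\{m\}$) each $P_i=P_{i-1}\cup U_i$ for distinct $m$-inseparable sets $U_1,\ldots,U_{n+1}$; this is also the first step in the paper. But the conclusion you then draw --- that ``every \iGW-partition based at $m$ appears in the chain'' --- is false, and the step ``$Q$ has $m$-length $\ell$, hence $P_{\ell-1}\subseteq Q\subseteq P_\ell$'' is where it fails: $Q$ need not be comparable with the $P_i$ at all. Concretely, if $\bI(m)=\{\{m\},\{m\inv\},U_1,U_2,U_3\}$ and the maximal chain is $P_1=\{m\}\cup U_1$, $P_2=\{m\}\cup U_1\cup U_2$, then $Q=\{m\}\cup U_2$ is a valid \iGW-subset of $m$-length $2$ equal to neither $P_1$ nor $P_2$; the partition $\WQ$ it determines is incompatible with $\WP_1$ (all four quadrants $P_1\cap Q,\ P_1\cap Q^*,\ P_1^*\cap Q,\ P_1^*\cap Q^*$ are nonempty), so $\WQ$ is a genuine ``other'' partition outside the collection. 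More generally there are $2^n-1$ \iGW-partitions based at $m$ versus only $n$ in any compatible collection, so for $n\ge 2$ the set-theoretic statement you are aiming for is simply not true --- the conclusion has to be group-theoretic, not combinatorial.

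The paper supplies exactly this missing algebraic step. Setting $V_i=\{m\}\cup U_i$, one checks directly on generators that $\phi(V_i,m)=\phi(P_i,m)\circ\phi(P_{i-1},m)\inv$ (the block $U_i$ is disjoint from $P_{i-1}$, so its contribution composes independently); hence the image of each $\phi(V_i,m)$ in $\UG$ lies in $G$ for $1\le i\le n+1$ (note $\phi(P_0,m)$ is the identity and $\phi(P_{n+1},m)$ is inner). Now an arbitrary side $Q\ni m$ decomposes as $Q=\{m\}\cup U_{i_1}\cup\cdots\cup U_{i_k}$, and $\phi(Q,m)=\phi(V_{i_1},m)\circ\cdots\circ\phi(V_{i_k},m)$, whence $\varphi(\WQ,m)\in G$. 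In the example above this reads $\varphi(\WQ,m)=\varphi(\WP_2,m)\,\varphi(\WP_1,m)\inv$, a nontrivial word in the generators rather than one of them.
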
  

\begin{proof}  
Let $P_i$ be the side of $\WP_i$ containing $m$.  
By maximality of the collection together with Lemma \ref{length} we know that $\bI(m)$ has exactly $n+1$ elements $U_1,\ldots, U_{n+1}$ other than $\{m\}$ and $\{m\inv\}$ and (after setting $P_0=\{m\}$ and possibly reordering) we have $P_{i}=P_{i-1}\cup U_i$.  Define $P_{n+1}=P_n\cup U_{n+1}$ and set $V_i=U_i\cup \{m\}$.  Then for all $i$ with $1\leq i\leq n+1$ we have 
$\phi(V_i,m)= \phi(P_i,m)\circ\phi(P_{i-1},m)\inv,$ so the corresponding outer automorphism is in $G$.   
  
 Each $m$-inseparable set in the side $Q$ of $\WQ$ containing $m$ is one of the $U_i$, so we have $Q=\{m\}\cup  U_{i_1}\cup\dots\cup U_{i_k}$.  Then
  \[
  \phi(Q,m) = \phi(V_{i_1},m)\circ\cdots\circ\phi(V_{i_k},m),
 \]
so $\varphi(\WQ,m)$ is in $G$.  
\end{proof}

We next show how Propositions~\ref{independent} and \ref{dependent} generalize to the situation where all partitions are based in the same abelian equivalence class.  

\begin{lem}\label{abeliancompatible} Let $\WP=\{P|P^*| lk(P)\}$ be based at $v\in\Gamma$ and let $w\in\Gamma$ be a distinct vertex with  $st(w)=st(v)$.  Let $P$ be the side of $\WP$ containing $v$, set $P_{v,w}= P\setminus\{v\}\cup\{w\}$ and
 $\WP_{v,w}=%\left\{(P\setminus\{v\}\cup \{w\})|(P^*\setminus \{v\inv\} \cup\{w\inv\}) | lk(w)\right\}\\
                  \left\{P_{v,w}|P^*_{v,w}|lk(w)^\pm\right\}.$
Then
\begin{enumerate} \item  $\WP$ and $\WP_{v,w}$ are compatible. 
\item  If $\WR$ is compatible  with $\WP$ then
  $\WR$ is also compatible with $\WP_{v,w}$
  \item If $\varphi(\WR,s)$ commutes with $\varphi(\WP,v)$ then $\varphi(\WR,s)$ commutes with $\varphi(\WP_{v,w},w)$
  \end{enumerate}
\end{lem}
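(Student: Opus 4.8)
The plan is first to unwind the hypothesis $st(w)=st(v)$. Since $v\in st(v)=st(w)$ and $v\neq w$, the vertices $v,w$ are adjacent, so $[v,w]=1$, and deleting their common link gives $\Gamma-lk(v)=(\Gamma-st(v))\sqcup\{v\}$ and $\Gamma-lk(w)=(\Gamma-st(v))\sqcup\{w\}$ (with $v$, resp.\ $w$, isolated). Hence $\bI(v)$ and $\bI(w)$ consist of the same inseparable sets coming from the components of $\Gamma-st(v)$, together with $\{v\},\{v^{-1}\}$ in one case and $\{w\},\{w^{-1}\}$ in the other. Writing $W=V^{\pm}\setminus st(v)^{\pm}\,(=V^{\pm}\setminus st(w)^{\pm})$ for the common ``movable part'', every \GW-subset based at $v$ has the form $\{v\}\cup B$ with $B\subseteq W$ a union of inseparable sets, and then $\{w\}\cup B$ is a \GW-subset based at $w$; thickness is inherited, so $\WP_{v,w}$ is a genuine \GW-partition based at $w$, with sides $P_{v,w}=\{w\}\cup B$ and $P^*_{v,w}=\{w^{-1}\}\cup(W\setminus B)$ and middle $lk(w)^{\pm}$ --- i.e.\ obtained from $\WP$ by pulling $v^{\pm}$ out of a side into the middle and pushing $w^{\pm}$ out of the middle into a side. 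Part (1) is then immediate: with $P=\{v\}\cup B$ the side of $\WP$ containing $v$, the set $P^*_{v,w}=\{w^{-1}\}\cup(W\setminus B)$ contains neither $v$ nor $v^{-1}$, so $P\cap P^*_{v,w}=\emptyset$ and compatibility holds by clause (1).

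\textbf{Part (2).} Let $\WR=\{R\mid R^*\mid lk(R)\}$ be based at $s$ and compatible with $\WP$; I would argue by cases on $s$. If $[s,v]\neq 1$ then $s\notin\{v,w\}$, and since $v,w$ are adjacent and lie outside $lk(s)$ they lie in one component $C$ of $\Gamma-lk(s)$ with $|C|\geq 2$; so $\{v^{\pm},w^{\pm}\}$ lies in the single inseparable set $C^{\pm}\in\bI(s)$, and every side of $\WR$ contains all of $\{v^{\pm},w^{\pm}\}$ or none of it. As $[s,v]\neq1$, compatibility of $\WR$ and $\WP$ must be via clause (1), say $R^{\times}\cap P^{\times}=\emptyset$; since $P^{\times}$ contains exactly one of $v,v^{-1}$, the side $R^{\times}$ omits it, hence omits all of $\{v^{\pm},w^{\pm}\}$, so $R^{\times}\cap P^{\times}_{v,w}=\emptyset$ too. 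If $[s,v]=1$ and $s\notin\{v,w\}$, then $s\in st(v)=st(w)$ with $s\neq w$ forces $[s,w]=1$, and $v^{\pm},w^{\pm}\in lk(s)^{\pm}$ lie in no side of $\WR$; so clause (1) transfers verbatim, and so does clause (2) (using $st(s)\neq st(v)=st(w)$). Finally, if $s\sim_{\star}v$ --- in particular if $s\in\{v,w\}$ --- both partitions being compared are based in the abelian class $[v]_{\star}$; when $s\neq w$, a direct comparison of the movable parts of the partitions (using Lemmas~\ref{length} and~\ref{nest} where two bases coincide, so that compatibility forces nested sides) shows that replacing $v$ by $w$ preserves compatibility, while the case $s=w$ is the delicate one, addressed below.

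\textbf{Part (3).} Here I would invoke Theorem~\ref{commute}. If $[s,v]=1$ then, by the case analysis above, $[s,w]=1$ as well (trivially when $s\in\{v,w\}$), so the first clause of Theorem~\ref{commute} gives that $\varphi(\WR,s)$ and $\varphi(\WP_{v,w},w)$ commute. If $[s,v]\neq 1$ then $s\notin\{v,w\}$ and $[s,w]\neq 1$, and Theorem~\ref{commute} applied to the hypothesis says that $\WR$ and $\WP$ are compatible, that $\WR$ does not split $v$, and that $\WP$ does not split $s$. By Part (2), $\WR$ and $\WP_{v,w}$ are compatible; $\WR$ does not split $w$, since $\{v^{\pm},w^{\pm}\}$ lies wholly in one side of $\WR$ (as above); and $\WP_{v,w}$ does not split $s$, since $s$ differs from $v$ and $w$, so $s$ and $s^{-1}$ lie in the parts of $\WP_{v,w}$ corresponding to those in $\WP$. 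Theorem~\ref{commute} then gives that $\varphi(\WR,s)$ commutes with $\varphi(\WP_{v,w},w)$.

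\textbf{Main obstacle.} The step needing the most care is the case of Part (2) with $\WR$ based at $w$ itself (more generally at a vertex $\star$-equivalent to $v$ other than $v$). Two \GW-partitions with the \emph{same} base $w$ are compatible only if their $w$-sides are nested (Lemma~\ref{length}), whereas $\WR$ based at $w$ and $\WP$ based at the distinct vertex $v$ of the same $\star$-class can be compatible with non-nested movable parts, so replacing $v$ by $w$ can destroy combinatorial compatibility. I would resolve this by observing that in the applications one transports an entire family of partitions based throughout $[v]_{\star}$ onto one vertex simultaneously, so the auxiliary partition $\WR$ may be assumed not based at $w$ (leaving exactly the cases $[s,v]\neq1$ and $[s,v]=1,\ st(s)\neq st(v)$ treated above), while partitions inside $[v]_{\star}$ are dealt with together through the shared-base results (Lemmas~\ref{length}--\ref{nest}, Propositions~\ref{independent}--\ref{dependent}). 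In any event Part (3) --- which is what is needed to build the free abelian subgroups --- holds unconditionally, since for $s=w$ the two bases literally commute.
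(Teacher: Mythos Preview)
Your treatment of Parts (1) and (3) is correct and matches the paper's argument, with additional detail on how $\bI(v)$ and $\bI(w)$ agree outside $\{v^{\pm},w^{\pm}\}$. Your case analysis for Part (2) when $[s,v]\neq 1$ and when $[s,v]=1$ with $st(s)\neq st(v)$ also coincides with the paper's. For $s\sim_{\star}v$ with $s\notin\{v,w\}$ your appeal to ``movable parts'' is unnecessarily indirect: the paper's one-line argument still applies there, since $s\neq w$ and $st(s)=st(w)$ give $w\in lk(s)$, so $w\in lk(\WR)$ lies in neither side of $\WR$, and hence $P_{v,w}\cap R=(P\cap R)\cup(\{w\}\cap R)=\emptyset$.

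The case $s=w$, which you flag as the main obstacle, is a genuine gap---not only in your sketch but in the paper's own proof. The paper asserts ``if $st(s)=st(v)=st(w)$ then $w\in lk(R)$,'' which fails precisely when $s=w$, and indeed Part (2) as stated is false in that case. Concretely, take $\Gamma$ with vertices $v,w,a,b$ and sole edge $vw$; set $P=\{v,a\}$ (based at $v$) and $R=\{w,b\}$ (based at $w$). Then $P\cap R=\emptyset$, so $\WR$ is compatible with $\WP$, but $\WP_{v,w}$ has sides $\{w,a\}$ and $\{w^{-1},a^{-1},b,b^{-1}\}$, and all four intersections with the sides $\{w,b\}$, $\{w^{-1},a,a^{-1},b^{-1}\}$ of $\WR$ are nonempty. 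Your proposed workaround---handle partitions based inside $[v]_{\star}$ collectively in the applications, and observe that Part (3) holds unconditionally since $s\sim_{\star}v$ forces $[s,w]=1$---is the appropriate repair.
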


\begin{proof} 
For the first statement, notice that  $P\cap P^*=\emptyset$ implies $P\cap  ( P_{v,w})^*=\emptyset$ since $w\in lk(v)$.  

Now suppose $\WR$ is based at $s$ and is compatible with $\WP$.  If $[v,s]=1$ and $st(v)\neq st(s)$, then $st(w)\neq st(s)$ so $\WR$ is compatible with $\WP_{v,w}$.  

If $st(v)=st(s)$ or if $[v,s]\neq 1$ then by possibly renaming sides may assume $P\cap R=\emptyset$.  The only  element of $P_{v,m}$ which is not in  $P$   is $w$.
If $st(s)=st(v)=st(w)$ then  $w\in lk(R)$, and  if $[s,v]\neq 1$ then $R\subset P^*,$  which does not contain $w$.  In either case   $w\not\in R$,  so $P_{v,w}\cap R=\emptyset$ and $\WP_{v,w}$ is compatible with $\WR$.  

For the third statement, by Theorem~\ref{commute} it remains to check that if $[w,s]\neq 1$ then $\WP_{v,w}$ doesn't split $s$ and $\WR$ doesn't split $w$.  The first statement clear since $w,w\inv\in lk(P)^\pm$, which doesn't intersect $R$.  The second follows since $\WP$ doesn't split $s$, and the only difference between $\WP$ and $\WP_{v,w}$ is the base $w$.  
 \end{proof}
 
\begin{rem}\label{starless} If $st(v)\subset st(w)$ and $P_{v,w}=P\setminus(\{v\}\cup lk(v)^{\pm})\cup \{w\}$, then statements $(1)$ and $(3)$ of Lemma~\ref{abeliancompatible} hold and statement $(2)$ holds unless $st(s)=st(w)$. 
 \end{rem}
 
 We say that $\WP_{v,w}$ in Lemma~\ref{abeliancompatible} is obtained from $\WP$ by {\em exchanging $v$ for $w$}.  
 
 \begin{cor}\label{swap} Let $\Pi$ be a maximal compatible collection of \iGW-partitions, and let $[v]$ be an abelian equivalence class of $\Gamma$.  If $\WP\in \Pi$ is based at $v\in [v]$, then
 $\Pi$ contains every \iGW-partition that can be obtained from $\WP$ by exchanging $v$ for a different element $w\in [v].$
 \end{cor}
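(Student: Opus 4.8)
The plan is a one-step maximality argument built directly on Lemma~\ref{abeliancompatible}. Fix $w\in[v]$ with $w\neq v$ and let $\WP_{v,w}$ be the partition obtained from $\WP$ by exchanging $v$ for $w$. I will show that $\Pi\cup\{\WP_{v,w}\}$ is again a compatible collection of \GW-partitions; since $\Pi$ is maximal this forces $\WP_{v,w}\in\Pi$, and letting $w$ range over $[v]\setminus\{v\}$ gives exactly the statement of the corollary.

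The first thing to check is that the exchange move is even defined in this situation, i.e.\ that $st(w)=st(v)$; this is the only place the hypothesis that $[v]$ is an \emph{abelian} equivalence class is used. For distinct $v$ and $w$ with $v\sim w$ one always has $lk(v)=lk(w)$ or $st(v)=st(w)$, so $[v]=[v]_\circ\cup[v]_\star$; since $[v]$ is abelian, $[v]_\circ=\{v\}$, hence $[v]=[v]_\star$ and $st(w)=st(v)$. I would also note in passing that $\WP_{v,w}$ really is a \GW-partition based at $w$: because $st(w)=st(v)$, the components of $\Gamma-lk(w)$ coincide with those of $\Gamma-lk(v)$ except that the singleton $\{v\}$ is replaced by the singleton $\{w\}$, so $\bI(w)$ is obtained from $\bI(v)$ by swapping $\{v^{\pm1}\}$ for $\{w^{\pm1}\}$, and $P_{v,w}$ and $P^*_{v,w}$ are accordingly unions of elements of $\bI(w)$ with the same thickness as $P$ and $P^*$.

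With $st(w)=st(v)$ established, the rest is bookkeeping on top of Lemma~\ref{abeliancompatible}. Part (1) of that lemma says $\WP_{v,w}$ is compatible with $\WP$. For every $\WR\in\Pi$ with $\WR\neq\WP$, we know $\WR$ is compatible with $\WP$ (because $\Pi$ is a compatible collection), so part (2) of the lemma gives that $\WR$ is compatible with $\WP_{v,w}$. Hence $\WP_{v,w}$ is compatible with every member of $\Pi$, so $\Pi\cup\{\WP_{v,w}\}$ is pairwise compatible; by maximality of $\Pi$ we conclude $\WP_{v,w}\in\Pi$.

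There is no substantial obstacle here: all of the real content has been absorbed into Lemma~\ref{abeliancompatible}, and the corollary is just the observation that a maximal compatible collection is closed under these compatibility-preserving exchange moves within an abelian class. The only point that needs a line of care is the verification that $st(w)=st(v)$, so that ``exchanging $v$ for $w$'' is meaningful. Note that part (3) of Lemma~\ref{abeliancompatible}, which concerns commutation of the associated outer automorphisms, is not used in this corollary; it becomes relevant only later, when comparing the free abelian subgroups attached to $\Pi$ before and after an exchange.
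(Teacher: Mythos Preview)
Your proof is correct and is exactly the argument the paper intends: the corollary is stated without proof precisely because it follows immediately from Lemma~\ref{abeliancompatible} by the maximality argument you give. Your verification that $st(w)=st(v)$ from the abelian hypothesis, and your separation into part~(1) for $\WR=\WP$ and part~(2) for $\WR\neq\WP$, are the only details worth spelling out, and you have handled them correctly.
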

 
 \begin{definition} Let $\WP$ be a \GW-partition based at $m$.  Define $\oring{\WP}$ to be the partition of $V^\pm\setminus st(m)^\pm$ obtained by intersecting each side of $\WP$ with $V^\pm\setminus st(m)^\pm$.
 \end{definition}
 
 \begin{lem}\label{comp} Let $ \WP_1,\ldots,\WP_k$ be pairwise-compatible \iGW-partitions based at $m_i\in[m]$ for some abelian equivalence class $[m]$. Then for some ordering of the $\WP_i$ and some choice of sides $P_i$  we have $\oring P_1\subseteq \oring P_2 \subseteq\ldots\subseteq \oring P_k$.  
 \end{lem}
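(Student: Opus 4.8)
The plan is to reduce to Lemma~\ref{nest} by exchanging all the bases for a single common base in $[m]$, and then to observe that the exchanges become invisible after deleting $st(m)^\pm$.

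First I record the relevant facts about $[m]$. Since $[m]$ is abelian, $[m]_\circ$ is a singleton; as $lk(v)\subseteq st(w)$ can hold only via $lk(v)\subseteq lk(w)$ or $st(v)\subseteq st(w)$, this forces $st(w)=st(m)$ for every $w\sim m$ with $w\neq m$. Hence $st(m_i)^\pm=st(m)^\pm=:S$ for all $i$, every $\oring{\WP}_i$ is a partition of the single set $T:=V^\pm\setminus S$, and $m_1,\dots,m_k\in S$. Replacing $m_i$ by $m_i\inv$ and the corresponding side of $\WP_i$ by its complement if necessary — which alters neither $\WP_i$ nor the partition $\oring{\WP}_i$ — we may assume each $m_i$ is a vertex of $\Gamma$; write $P_i$ for the side of $\WP_i$ containing $m_i$, and fix one of these vertices, say $m_0:=m_1$.

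Next I transfer everything to the base $m_0$. For each $i$ let $\WP_i'$ be obtained from $\WP_i$ by exchanging $m_i$ for $m_0$ in the sense of Lemma~\ref{abeliancompatible} (so $\WP_i'=\WP_i$ when $m_i=m_0$); then $\WP_i'$ is a \GW-partition based at $m_0$, and the side $P_i'$ of $\WP_i'$ containing $m_0$ equals $(P_i\setminus\{m_i\})\cup\{m_0\}$. Performing the exchanges one at a time and applying Lemma~\ref{abeliancompatible} at each stage to compare the just-exchanged partition against all the others, we see that $\WP_1',\dots,\WP_k'$ is again pairwise compatible. Because $\{m_i,m_0\}\subseteq[m]\subseteq S$, intersecting with $T$ deletes $m_i$ and $m_0$ and nothing else, so $\oring{P}_i'=\oring{P}_i$ and $\oring{\WP}_i'=\oring{\WP}_i$ for every $i$.

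Finally, Lemma~\ref{nest} applies to the pairwise-compatible partitions $\WP_1',\dots,\WP_k'$, all based at the single element $m_0$: after reordering, $P_1'\subseteq P_2'\subseteq\dots\subseteq P_k'$. Intersecting each of these inclusions with $T=V^\pm\setminus S$ gives $\oring{P}_1'\subseteq\dots\subseteq\oring{P}_k'$, that is, $\oring{P}_1\subseteq\oring{P}_2\subseteq\dots\subseteq\oring{P}_k$ for this ordering and this choice of sides. The only real content is the transfer to a common base while keeping the collection pairwise compatible, handled by Lemma~\ref{abeliancompatible}; the step needing care is the bookkeeping in the iterated exchange, since each application of that lemma controls only one pair at a time, but this is routine rather than difficult.
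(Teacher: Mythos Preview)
Your proof is correct and follows essentially the same approach as the paper: fix a single representative of $[m]$, exchange all bases for it via Lemma~\ref{abeliancompatible}, apply Lemma~\ref{nest}, then strip off the common base to recover the $\oring P_i$. The paper's version is terser (it simply asserts the exchanged partitions are compatible ``by Lemma~\ref{abeliancompatible}'' without spelling out the two-step iteration you describe), but the argument is the same.
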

  \begin{proof}  Let $P_i$ be the side of $\WP_i$ that contains $m_i$, and set $\oring P_i=P_i\setminus \{m_i\}$.    Fix $m\in[m]$ and for each $i$ define $P_{i,m}  = P_i\setminus \{m_i\}\cup \{m\}=\oring P_i\cup\{m\}$.  Then the $P_{i,m} $ are all compatible by Lemma~\ref{abeliancompatible}, and 
 by Lemma~\ref{nest}  we can renumber  the $P_{i,m} $ in order of size to obtain $P_{1,m} \subseteq\cdots\subseteq P_{k,m} $.  Removing $m$ from each $P_i$ now gives  $\oring P_1 \subseteq \oring P_2 \subseteq\ldots\subseteq \oring P_k.$
\end{proof}

 \begin{prop}
\label{independent abelian} Let $[m]$ be an abelian equivalence class and 
suppose $\Pi=\{\WP_1,\ldots,\WP_k\}$ is a compatible collection of distinct  \iGW-partitions based at elements $m_i\in[m]$. %, and let $P_i$ be the side of $\WP_i$ contining $m_i$.  
 Then the subgroup of $\UG$ generated by the $ \varphi{(\WP_i,m_i)}$ is free abelian of rank $k$.
\end{prop}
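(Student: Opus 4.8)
The plan is to reduce Proposition~\ref{independent abelian} to the already-proven Proposition~\ref{independent} by transporting all the partitions to a common base vertex $m\in[m]$. Since $[m]$ is an abelian equivalence class, Lemma~\ref{abeliancompatible}(2) tells us that replacing any $\WP_i$ (based at $m_i$) by its exchange $\WP_i' := (\WP_i)_{m_i,m}$ (based at $m$) preserves compatibility with every other partition in $\Pi$; so the collection $\Pi' = \{\WP_1',\ldots,\WP_k'\}$ consists of pairwise-compatible \iGW-partitions all based at $m$. By Lemma~\ref{abeliancompatible}(3), $\varphi(\WP_i',m)$ commutes with $\varphi(\WP_j',m)$ for all $i,j$, and also each $\varphi(\WP_i,m_i)$ commutes with each $\varphi(\WP_j,m_j)$ (one can see this directly: their bases commute, being in the same abelian class — so Theorem~\ref{commute} applies). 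Thus both $\langle \varphi(\WP_i,m_i)\rangle$ and $\langle \varphi(\WP_i',m)\rangle$ are abelian.

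Next I would check that the $\WP_i'$ are distinct: two of them being equal would force $\oring{P_i} = \oring{P_j}$ (where $P_i, P_j$ are the sides containing the base), and combined with the fact that the original $\WP_i$ are distinct this would mean $\WP_i$ and $\WP_j$ differ only in which $st$-equivalent vertex they split off. Here I would either argue this cannot happen for distinct members of a compatible collection via Lemma~\ref{length} applied after exchange, or — more cleanly — observe that if $\WP_i' = \WP_j'$ then by Corollary~\ref{swap}-type reasoning the generators $\varphi(\WP_i,m_i)$ and $\varphi(\WP_j,m_j)$ would be related, and handle that degenerate case separately (it forces $i=j$ because distinct partitions in a compatible collection based in $[m]$ have distinct $\oring{P}$'s by Lemma~\ref{comp} together with Lemma~\ref{length}). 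Granting distinctness, Proposition~\ref{independent} gives that $\langle \varphi(\WP_1',m),\ldots,\varphi(\WP_k',m)\rangle$ is free abelian of rank $k$.

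Finally I would show the map $\varphi(\WP_i,m_i) \mapsto \varphi(\WP_i',m)$ extends to an isomorphism between the two subgroups — or, more directly, that $\langle \varphi(\WP_i,m_i)\rangle$ is itself free abelian of rank $k$ by running the argument of Proposition~\ref{independent} verbatim with the $\oring{P_i}$ in place of the $P_i$. That is: order so that $\oring{P_1}\subseteq\cdots\subseteq\oring{P_k}$ by Lemma~\ref{comp}, set $g = \prod \varphi(\WP_i,m_i)^{n_i}$, pick $u$ in the appropriate difference set $\oring{P_i}\setminus\oring{P_{i-1}}$ of a side, track the exponent $a = \sum_{\ell \geq i} n_\ell$ of the multiplier (which now lies in $[m]$ — but the subgroup generated by an element of an abelian equivalence class still maps to a rank-one free abelian subgroup of the abelianization, so the conjugacy/triviality conclusions of Proposition~\ref{independent} go through), and conclude $n_i = 0$ for all $i$ by downward induction exactly as there.

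The main obstacle I anticipate is the bookkeeping around the base vertices now varying within $[m]$: in Proposition~\ref{independent} the single multiplier $m$ let us read exponents off cleanly, whereas here the multipliers $m_i$ differ (though they all commute and lie in one $\sim_\star$-class). I expect this to be handled by working with the common-base partitions $\WP_i'$ rather than fighting with the $\WP_i$ directly — i.e., the cleanest route is genuinely the reduction via Lemma~\ref{abeliancompatible}, with the one real thing to verify being that exchange is injective on $\Pi$, which is where Lemmas~\ref{length} and \ref{comp} do the work.
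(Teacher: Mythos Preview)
Your approach has a genuine gap at the distinctness step, and the gap is not repairable along the lines you sketch. The claim that ``distinct partitions in a compatible collection based in $[m]$ have distinct $\oring{P}$'s'' is false: if $v,w\in[m]$ are distinct and $\WP$ is based at $v$, then by Lemma~\ref{abeliancompatible}(1) the exchanged partition $\WP_{v,w}$ is compatible with $\WP$, and these two distinct partitions satisfy $\oring{P}=\oring{P_{v,w}}$ by construction. (Note Lemma~\ref{comp} gives only $\subseteq$, not $\subsetneq$.) So your map $\WP_i\mapsto\WP_i'$ to the common base $m$ collapses such pairs, and Proposition~\ref{independent} applied to $\Pi'$ only yields a subgroup of rank strictly less than $k$. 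Your fallback of running the proof of Proposition~\ref{independent} with the $\oring{P_i}$ fails for the same reason: when $\oring{P_i}=\oring{P_{i+1}}$ there is no element in the difference set to track, and the exponents of the distinct multipliers $m_i,m_{i+1}$ cannot be separated by your argument.

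The paper's proof avoids this by \emph{not} transporting to a common base. Instead it partitions $\Pi$ into subsets $\Pi_n$ according to the (uniquely determined, since $[m]$ is abelian) base $n\in[m]$. Proposition~\ref{independent} applies to each $\Pi_n$ separately, giving a free abelian subgroup $A_n$. The subgroups $A_n$ for distinct $n$ intersect trivially because they use different multipliers, so the subgroup generated by all the $\varphi(\WP_i,m_i)$ is the direct product $\prod_n A_n$, which is free abelian of rank $\sum_n|\Pi_n|=k$. The key insight you are missing is precisely that the base vertex carries independent information that is destroyed by exchange.
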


\begin{proof}Since  $[m]$ is abelian the base $m_i$ of each $\WP_i$ is uniquely determined by $\WP_i$, so we may partition $\Pi$ into subsets $\Pi_n$ with the same base $n\in[m]$.  The subgroup generated by the $\varphi(\WP_i,m_i)\in \Pi_n$   is free abelian by Proposition~\ref{independent}, and the intersection of any two of these is trivial since they use different multipliers.  Therefore the subgroup generated by all of the $\varphi(\WP_i,m_i)$ is the direct product of the subgroups $A_n$ generated by the $\varphi(\WP_i,m_i)\in \Pi_n$, so is free abelian of rank $k$.  
 \end{proof}

 \begin{prop} \label{dependent abelian} Let $\{\WP_1,\ldots,\WP_k\}$ be a maximal compatible collection of    \iGW-partitions based at elements $m_i$ of an abelian equivalence class $[m]$.  Suppose $\WQ$ is another  \iGW-partition based at some $n\in [m]$.  Then $\varphi{(\WQ,n)}$ is in the subgroup generated by the $\varphi{(\WP_i,m_i)}$. % \blue{What if $P_i$ and $P_j$ are based at star-equivalent vertices $m\neq n$?}
\end{prop}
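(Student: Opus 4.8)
The plan is to reduce this to its single-multiplier case, Proposition~\ref{dependent}. Let $n\in[m]$ be the base of $\WQ$; as in the proof of Proposition~\ref{independent abelian}, $n$ is uniquely determined by $\WQ$ because $[m]$ is abelian. Set $\Pi_n=\{\WP_i\in\Pi:\WP_i\text{ is based at }n\}$. If I can show that $\Pi_n$ is a \emph{maximal} compatible collection of \iGW-partitions all based at the single vertex $n$, then Proposition~\ref{dependent} (with $n$ in the role of $m$) gives $\varphi(\WQ,n)\in\langle\varphi(\WP_i,n):\WP_i\in\Pi_n\rangle$; since $\varphi(\WP_i,n)=\varphi(\WP_i,m_i)$ for every $\WP_i\in\Pi_n$, this subgroup sits inside the subgroup generated by all the $\varphi(\WP_i,m_i)$, which is the conclusion. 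So the proof comes down to proving that $\Pi_n$ is maximal.

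To establish maximality, I would take an arbitrary \iGW-partition $\WR$ based at $n$ that is compatible with every member of $\Pi_n$, and show that $\WR$ is actually compatible with every member of $\Pi$. Granting this, $\Pi\cup\{\WR\}$ is a compatible collection of \iGW-partitions based in $[m]$, so $\WR\in\Pi$ by maximality of $\Pi$, and hence $\WR\in\Pi_n$. So fix $\WP\in\Pi$, based at some $v\in[m]$. Because $[m]$ is abelian and $\Pi$ is maximal, Corollary~\ref{swap} tells us that the \iGW-partition $\WP'$ obtained from $\WP$ by exchanging $v$ for $n$ also lies in $\Pi$, hence in $\Pi_n$ (if $v=n$, take $\WP'=\WP$). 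Thus $\WR$ is compatible with $\WP'$, and $\WR$ and $\WP'$ are both based at $n\in[m]$, so Lemma~\ref{comp} applied to the pair $\{\WR,\WP'\}$ shows that the sides of $\WR$ and $\WP'$ containing $n$ have nested cores. Since exchanging $v$ for $n$ leaves the cored partition $\oring\WP$ unchanged, this says exactly that $\oring R\subseteq\oring P$ or $\oring P\subseteq\oring R$, where $R,R^*$ are the sides of $\WR$ with $n\in R$, where $P,P^*$ are the sides of $\WP$ with $v\in P$, and where $\oring S$ abbreviates $S\cap(V^\pm\setminus st(m)^\pm)$ for any side $S$.

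It remains to turn this nesting into compatibility of $\WR$ with $\WP$. Since $st(n)=st(v)$, condition (2) of the definition of compatibility cannot supply it, so I must exhibit a disjoint pair of sides. If $\oring R\subseteq\oring P$ I would check that $R$ and $P^*$ are disjoint: $n\neq v\inv$; neither $n$ nor $v\inv$ lies in $V^\pm\setminus st(m)^\pm$, so neither lies in a cored part; and $\oring R\subseteq\oring P$, which is disjoint from $\oring{P^*}$. If instead $\oring P\subseteq\oring R$, then $\oring{R^*}\subseteq\oring{P^*}$, and the symmetric check shows $R^*$ and $P$ are disjoint. Either way $\{\WR,\WP\}$ satisfies condition (1) of the definition, so $\WR$ and $\WP$ are compatible. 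This finishes the proof that $\Pi_n$ is maximal, and with it the proposition.

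The main obstacle I anticipate is the bookkeeping in the last two paragraphs: keeping straight which side of $\WR$ contains $n$ and which contains $n\inv$, remembering that an exchange of bases does not disturb the cored sides, and then checking disjointness of the correct uncored pair of sides by separately examining the part coming from $\{n,n\inv,v,v\inv\}$ and the part lying in $V^\pm\setminus st(m)^\pm$. Everything else — the reduction to Proposition~\ref{dependent}, the use of Corollary~\ref{swap}, and the use of Lemma~\ref{comp} — is formal.
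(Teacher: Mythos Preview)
Your proposal is correct and follows exactly the paper's strategy: reduce to Proposition~\ref{dependent} by showing that $\Pi_n$ is a maximal compatible collection of \iGW-partitions based at the single vertex $n$. The paper asserts this maximality in one sentence without justification; your careful verification via Lemma~\ref{comp} and the core comparison is valid, though a quicker route is to observe that Lemma~\ref{abeliancompatible}(2) applies symmetrically (exchanging $n$ back for $v$), so compatibility of $\WR$ with $\WP_{v,n}\in\Pi_n$ already yields compatibility of $\WR$ with $\WP=(\WP_{v,n})_{n,v}$, avoiding the side-by-side bookkeeping entirely.
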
  
\begin{proof}  Since $\Pi$ is maximal, $n=m_i$ for some $i$ by Lemma~\ref{swap}.  Also, the partitions $\WP_i$ based at $n$ form a maximal collection  of such partitions.  So by Proposition~\ref{dependent} $\varphi(\WQ,n)$ is in the subgroup generated by the $\varphi(\WP_i,n)$.   
\end{proof}

\subsection{Large abelian subgroups of $U(A_\Gamma)$}

\begin{definition} For any subset $U\subset V$ of vertices of $\Gamma$, let $M(U)$ denote the largest possible size of a compatible collection of \GW-partitions, each based at some $u\in U$.  
\end{definition}

 \begin{example}\label{dimKG}   $M(V)=dim(K_\Gamma)$, by Theorem~\ref{dimension}.
 \end{example}
  \begin{example}\label{Mm} $M(m)=|\bI(m)|-3,$ since any \GW-partition based at $m$ gives a thick partition of $\bI(m)$, and the largest compatible set of such partitions is obtained by adding one element of $\bI(m)$ at a time.  
 \end{example}
 
 \begin{notation}  Let $\Pi$ be a compatible collection of \iGW-partitions, and  $U\subset V$ is a subset of vertices of $\Gamma$. Then 
\begin{itemize}
\item $\Pi_U=\{\WP\in\Pi :  \WP$ is based at some $u\in U\}$ and
\item $\Pi^\pm$ is the set of \iGW-subsets of $V^\pm$ which are sides of elements of $\Pi$.
\end{itemize} 
 \end{notation}

In this section we find a free abelian subgroup of $\UG$ of rank $M(L)$, where $L$ is the set of principal vertices of $\Gamma,$ i.e. the set of vertices of $\Gamma$ with maximal links.   This subgroup will be generated by $\Gamma$-Whitehead automorphisms, and we will also show that every abelian subgroup freely generated by $\Gamma$-Whitehead automorphisms has rank at most $M(L)$.    
 The following lemma shows that  this bound is unchanged if we use the weaker notion of compatibility (see Remark~\ref{CSVerror}.)

\begin{lem}\label{weak} Let   $U\subset V$ be any subset of vertices of $\Gamma$, and let $\mu(U)$ denote the largest possible size of a weakly compatible collection of \iGW-partitions, each based at some $u\in U$.   Then  $\mu(U)=M(U)$.
\end{lem}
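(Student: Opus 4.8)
The inequality $M(U)\le\mu(U)$ is immediate, since every compatible collection is weakly compatible. For the reverse the plan is to start with a weakly compatible collection $\Pi$ based at vertices of $U$ with $|\Pi|=\mu(U)$ and modify it, without changing its cardinality or the property of being based at $U$, into a genuinely compatible collection. The first step is to localise the difference between the two notions. Comparing the definition of compatibility with the weaker version of Remark~\ref{CSVerror}, a weakly compatible pair $\WP$ based at $m$ and $\WQ$ based at $n$ can fail to be compatible only when $[m,n]=1$, $m\neq n$ and $st(m)=st(n)$. In that case $m\sim_{\star}n$ with $m\neq n$, so $[m]_{\star}$ is not a singleton, hence $[m]_\circ$ is, so $[m]$ is a non-singleton \emph{abelian} equivalence class; and a short check (ruling out the ``mixed'' possibility $lk(v)\subseteq lk(w)$, $st(w)\subseteq st(v)$ for distinct $v,w$) shows that then all vertices of $[m]$ commute and share the common star $st(m)$. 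Thus an incompatible pair inside $\Pi$ must consist of two partitions based in one non-singleton abelian class. It therefore suffices, for each such class $[m]$, to replace the subcollection $\Pi_{[m]}$ of partitions of $\Pi$ based in $[m]$ by a \emph{pairwise compatible} collection $\Pi_{[m]}'$ based at vertices of $[m]\cap U$, with $|\Pi_{[m]}'|\ge|\Pi_{[m]}|$ and with each member of $\Pi_{[m]}'$ still weakly compatible --- hence, by the above, compatible --- with each member of $\Pi\setminus\Pi_{[m]}$; iterating over the finitely many non-singleton abelian classes then yields a compatible collection of size $\mu(U)$ based at $U$.

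The second step is to describe the partitions based in a fixed non-singleton abelian class $[m]$ concretely. Put $W=V^\pm\setminus st(m)^\pm$. For each $t\in[m]$ the vertex $t$ is isolated in $\Gamma-lk(t)$, and deleting it leaves a graph independent of $t\in[m]$; hence $\bI(t)$ consists of $\{t\}$ and $\{t\inv\}$ together with a fixed family $\mathcal C$ of inseparable sets, with $\bigcup\mathcal C=W$. A \GW-partition based at $t\in[m]$ is then encoded by a pair $(t,X)$, where $X$ --- the trace on $W$ of the side containing $t$ --- is a non-empty proper union of elements of $\mathcal C$; conversely every such pair gives a valid \GW-partition, with sides $X\cup\{t\}$, $(W\setminus X)\cup\{t\inv\}$ and $lk(t)^\pm$. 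Unwinding the definition of compatibility (and using Lemmas~\ref{length} and \ref{nest}) one finds that $(t,X)$ and $(t,X')$ are compatible iff $X\subseteq X'$ or $X'\subseteq X$, while $(t,X)$ and $(t',X')$ with $t\neq t'$ are compatible iff the two-part partitions $\{X,W\setminus X\}$ and $\{X',W\setminus X'\}$ of $W$ do not cross. Crucially, Lemma~\ref{abeliancompatible}(2) says exchanging the base of a partition within $[m]$ preserves compatibility with every other partition, so whether $(t,X)$ is compatible with a given partition based outside $[m]$ depends only on $X$, not on $t$.

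The third step is the surgery. For each $t\in[m]\cap U$ the set $\mathcal X_t=\{X:(t,X)\in\Pi\}$ is a chain under inclusion by the same-base criterion, and $|\Pi_{[m]}|=\sum_{t\in[m]\cap U}|\mathcal X_t|$. Picking $t_0\in[m]\cap U$ maximising $|\mathcal X_{t_0}|$, I set $\Pi_{[m]}'=\{(t,X):t\in[m]\cap U,\ X\in\mathcal X_{t_0}\}$. Then $|\Pi_{[m]}'|=|[m]\cap U|\cdot|\mathcal X_{t_0}|\ge\sum_t|\mathcal X_t|=|\Pi_{[m]}|$, and $\Pi_{[m]}'$ is based at $[m]\cap U\subseteq U$. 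Since $\mathcal X_{t_0}$ is a chain, the partitions $\{X,W\setminus X\}$, $X\in\mathcal X_{t_0}$, are pairwise nested, hence non-crossing, so $\Pi_{[m]}'$ is pairwise compatible by the two criteria above. Finally, every $X$ occurring in $\Pi_{[m]}'$ already occurs as $(t_0,X)\in\Pi$, which is compatible with each element of $\Pi\setminus\Pi_{[m]}$ (weak compatibility equalling compatibility for those cross-class pairs), and compatibility with a partition based outside $[m]$ depends only on $X$; so each $(t,X)\in\Pi_{[m]}'$ is compatible with all of $\Pi\setminus\Pi_{[m]}$. Replacing $\Pi_{[m]}$ by $\Pi_{[m]}'$ thus gives a weakly compatible collection based at $U$ of size $\mu(U)$ with no incompatible pair based in $[m]$; partitions based in the other abelian classes are untouched, so iterating over all non-singleton abelian classes gives a compatible collection, proving $M(U)\ge\mu(U)$.

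I expect the main obstacle to be the bookkeeping around compatibility with the outside: one has to pin down precisely which pairs witness the gap between weak and strong compatibility (the content of the first paragraph) and then, in the surgery step, use Lemma~\ref{abeliancompatible}(2) to see that compatibility of a partition based in $[m]$ with a partition based outside $[m]$ is a function of the combinatorial datum $X$ alone, invoking at that moment that for such cross-class pairs the two notions of compatibility already agree. One must also take care to re-base only within $[m]\cap U$ so that the modified collection stays based at $U$; everything else (the concrete description of $\bI(t)$ for $t\in[m]$ and the three compatibility criteria) is routine unwinding of the definitions in Section~\ref{review}.
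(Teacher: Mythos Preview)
Your argument is correct and follows essentially the same strategy as the paper: for each non-singleton abelian class $[m]$ pick the base $t_0$ carrying the most partitions and replace $\Pi_{[m]}$ by the copies $\{(t,X):t\in[m],\,X\in\mathcal X_{t_0}\}$, invoking Lemma~\ref{abeliancompatible} to preserve compatibility with the rest of $\Pi$. Your version is in fact slightly more careful than the paper's, since you re-base only within $[m]\cap U$ (ensuring the modified collection remains based in $U$), a point the paper's terse proof leaves implicit.
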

\begin{proof} Let $\Pi$ be any collection of weakly compatible partitions of size $\mu(U)$.  For each abelian equivalence class $[v]$ choose $m\in [v]$ such that $|\Pi_m|$ is largest.  Remove all $\WP\in \Pi_{[v]}-\Pi_m$ from $\Pi$, then add partitions $\WP_{m,n}$ for each $\WP\in \Pi_m$ and $n\in [v]$ with $n\neq m$.  By Lemma~\ref{abeliancompatible} the resulting collection $\Pi'$ is a (strongly) compatible collection, and since $|\Pi_m|$ was largest we have $|\Pi'|\geq |\Pi|$.  Therefore, $\mu(U)\leq M(U)$. However, any compatible partitions are weakly compatible so $\mu(U)\geq M(U)$ giving equality.
\end{proof}

In  Lemma~\ref{splits} to Proposition~\ref{PQ*}   we fix  a compatible collection $\Pi$ of \GW-partitions.  Recall that a partition {\em splits} a vertex $v$ if $v$ and $v\inv$ are in different sides of the partition.

\begin{lem}\label{splits}   Suppose  $\WP\in \Pi$  is based at $m$ and  $\mathcal R\in \Pi$ is based at $s\not\sim m$.  If $m$ and $s$ do not commute and $\mathcal R$ splits some vertex in $[m]$, then $m<_\circ s$.    In particular, if $m$ is principal then all of $[m]^{\pm}$ is in the same side of $\mathcal R$.
\end{lem}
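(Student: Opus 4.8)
The plan is to work with the defining inclusions. Suppose $\mathcal R$ splits some vertex $p\in[m]$, so $p$ and $p\inv$ lie in different sides of $\mathcal R=\{R\mid R^*\mid lk(R)\}$; in particular $p\notin lk(s)^\pm$, so $p$ is not adjacent to $s$. Since $\WP$ is based at $m$ and $p\sim_\circ m$ (because $[m]$ is an equivalence class and $m\not\sim s$ forces us into the relevant case — here we use that $p$ and $m$ have the same link, as $[m]$ is non-singleton only via $\sim_\circ$ or $\sim_\star$, and the hypothesis $s\not\sim m$ together with non-commutation will pin this down), we have $lk(p)=lk(m)$. So it suffices to show $lk(m)=lk(p)\subseteq lk(s)$, i.e. $m\leq_\circ s$, and then to upgrade $\leq_\circ$ to $<_\circ$ using $m\not\sim s$.

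First I would show $lk(p)\subseteq lk(s)$. Take any $x\in lk(p)$. Then $x\in st(p)$ and $x$ commutes with $p$; I want to conclude $x$ commutes with $s$. The key point is that $p$ and $p\inv$ are separated by $\mathcal R$: since $\mathcal R$ is a $\Gamma$-Whitehead partition based at $s$, its nontrivial sides are unions of $s$-inseparable subsets, and $p$ lying in one side while $p\inv$ lies in the other forces $\{p\}$ and $\{p\inv\}$ to each be singleton $s$-inseparable subsets, i.e. the component $C$ of $\Gamma-lk(s)$ containing $p$ is the single vertex $\{p\}$. So $p$ is isolated in $\Gamma-lk(s)$: every vertex $x\neq p$ adjacent to $p$ lies in $lk(s)$, i.e. is adjacent to $s$. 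Hence $lk(p)\subseteq st(s)$. To get the sharper $lk(p)\subseteq lk(s)$ rather than just $lk(p)\subseteq st(s)$, I note $s\notin lk(p)$ (as $s$ does not commute with $m$, hence not with $p$), so $lk(p)\subseteq st(s)\setminus\{s\}=lk(s)$. This gives $m\leq_\circ s$.

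Next I would rule out $m\sim_\circ s$, which combined with the above yields $m<_\circ s$. If $m\sim_\circ s$ then $lk(m)=lk(s)$; I must derive a contradiction from $m\not\sim s$. Since $m\sim_\circ s$ but $m\not\sim s$, we cannot also have $m\sim_\star s$, so $st(m)\neq st(s)$; combined with $lk(m)=lk(s)$ this is impossible unless... — actually $lk(m)=lk(s)$ already forces $st(m)=st(s)\cup\{$nothing$\}$; more carefully, if $lk(m)=lk(s)$ and $m\neq s$ then either $m\in lk(s)$ or not. If $m\notin lk(s)$ then $s\notin lk(m)$, and $st(m)=lk(m)\cup\{m\}=lk(s)\cup\{m\}$, $st(s)=lk(s)\cup\{s\}$, which differ. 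But we can get a direct contradiction: $m\sim_\circ s$ means $m$ and $s$ have the same link, so $\bI(m)=\bI(s)$ and $\WP$ is also based at $s$; then $m\sim s$ would follow from the conventions only if $m,s$ commute, which they don't — so in fact $m\sim_\circ s$ with $m\neq s$ and $[m,s]\neq 1$ is perfectly consistent, and this is exactly the case excluded by hypothesis $s\not\sim m$ (since $\sim_\circ$ implies $\sim$). Thus $m\not\sim s$ forbids $m\sim_\circ s$, leaving $m<_\circ s$.

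Finally, the "in particular" clause: if $m$ is principal then $m<_\circ s$ is impossible by the definition of principal vertex, so $\mathcal R$ splits no vertex of $[m]$; equivalently every $p\in[m]$ has $p$ and $p\inv$ on the same side of $\mathcal R$, i.e. all of $[m]^\pm$ lies in one side of $\mathcal R$ (here one also uses that $[m]$ is a non-abelian equivalence class, so all of $[m]$ shares a link and the argument applies uniformly to each $p\in[m]$, and that the finitely many vertices of $[m]^\pm$ cannot be distributed across both sides without splitting one of them). I expect the main obstacle to be the bookkeeping in the second paragraph: carefully justifying that $\mathcal R$ splitting $p$ forces $\{p\}$ to be its own connected component of $\Gamma - lk(s)$ — this is where the precise definition of $s$-inseparable subsets and $\Gamma$-Whitehead subsets must be invoked — and then making sure the passage from $st(s)$ to $lk(s)$ is airtight using $[m,s]\neq 1$.
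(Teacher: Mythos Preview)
Your direct argument for the main implication is correct and is essentially the paper's proof run forward rather than by contrapositive. The paper assumes $m\not<_\circ s$, picks $v\in lk(m)\setminus lk(s)$ (which exists since also $m\not\sim s$), and observes that $v$ is adjacent to every element of $[m]$, so all of $[m]$ lies in one component of $\Gamma-lk(s)$ and nothing in $[m]$ can be split. One wrinkle in your version: you assert $lk(p)=lk(m)$ via $p\sim_\circ m$, but $[m]$ might be abelian. This is easy to repair --- if $p\sim_\star m$ with $p\neq m$ then $m\in lk(p)\subseteq lk(s)$, contradicting $[m,s]\neq 1$ --- so $lk(p)=lk(m)$ does hold in every case.

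Your argument for the ``in particular'' clause, however, has a genuine gap. Knowing that no $p\in[m]$ is split does \emph{not} by itself imply that all of $[m]^\pm$ lies on one side of $\mathcal R$: one could in principle have $p,p^{-1}\in R$ and $q,q^{-1}\in R^*$ for distinct $p,q\in[m]$ lying in different non-singleton components of $\Gamma-lk(s)$, with nothing split. Your parenthetical ``cannot be distributed across both sides without splitting one of them'' is simply false as a general principle. The paper's contrapositive handles this automatically: the connecting vertex $v\in lk(m)\setminus lk(s)$ puts all of $[m]$ into a \emph{single} component, hence into a single $s$-inseparable set $C^\pm$, hence onto one side. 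To complete your direct approach you must supply this missing step: if $p,q\in[m]$ lay in different components of $\Gamma-lk(s)$ then (in the non-abelian case, where $lk(p)=lk(q)=lk(m)$) every vertex of $lk(m)$ would have to lie in $lk(s)$, giving $m\leq_\circ s$ and contradicting $m$ principal together with $m\not\sim s$.
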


\begin{proof}   
We are assuming $m\not\sim s$, so if  $m\not<_\circ s$ there is some $v\in lk(m)$ which is not in $lk(s)$. This $v$ is adjacent to every element of $[m]$ so  all of $[m]$ is in the same component of $\Gamma-lk(s)$.  
\end{proof}

 \begin{lem}\label{position*}  Let $m$ be a principal vertex of $\Gamma$,  $\WP_1,\ldots,\WP_k\in \Pi_{[m]_{\star}}$ and let 
$$\emptyset=\oring P_0 \subset \oring P_1\subseteq\ldots\subseteq\oring P_k\subset \oring P_{k+1} =V^\pm\setminus st(m)^\pm,$$  
where $\oring P_1\subseteq\ldots\subseteq\oring P_k$ is  the nest found in Lemma~\ref{comp}. 
  Suppose  $\WQ\in \Pi\setminus\Pi_{[m]_{\star}}$  is based at $n$. If $m$ does not commute with $n$,   then there is a side $Q$ of $\WQ$ with $Q\subseteq \oring P_i\cap \oring P_{i-1}^*$ for some $i$ with $1\leq i\leq k+1$.  \end{lem}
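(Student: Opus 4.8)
The plan is to split the statement into two claims and then assemble them. It suffices to establish: \textbf{(A)} some side $Q$ of $\WQ$ is disjoint from $st(m)^\pm$; and \textbf{(B)} any such side $Q$ lies entirely in one side of each $\oring{\WP_j}$, i.e.\ for every $j$ either $Q\subseteq \oring P_j$ or $Q\cap \oring P_j=\emptyset$. Granting these, let $i$ be the least index in $\{1,\dots,k+1\}$ with $Q\subseteq \oring P_i$; this set is nonempty since $Q\subseteq V^\pm\setminus st(m)^\pm=\oring P_{k+1}$ by (A). If $i\geq 2$, minimality gives $Q\not\subseteq \oring P_{i-1}$, so (B) forces $Q\cap \oring P_{i-1}=\emptyset$; combined with $Q\subseteq V^\pm\setminus st(m)^\pm$ this yields $Q\subseteq \oring P_{i-1}^*$, hence $Q\subseteq \oring P_i\cap \oring P_{i-1}^*$. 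If $i=1$ the same holds because $\oring P_0^*=V^\pm\setminus st(m)^\pm\supseteq Q$.

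For (A) I would split into cases according to the relation between $m$ and $n$. Since $n\notin [m]_{\star}$ and $n$ does not commute with $m$, we have either $n\sim_\circ m$ or $n\not\sim m$. If $n\sim_\circ m$, then $lk(n)=lk(m)$ and $\bI(n)=\bI(m)$; were $\WQ$ to split $m$, the side of $\WQ$ containing $m$ would be a union of $m$-inseparable subsets containing $m$ but not $m^{-1}$ and avoiding $lk(m)^\pm$, so $\WQ$ would itself be a $\Gamma$-Whitehead partition based at $m\in[m]_{\star}$, contradicting $\WQ\notin \Pi_{[m]_{\star}}$. Hence $\WQ$ does not split $m$; since $lk(m)^\pm=lk(n)^\pm$ is the link side of $\WQ$, the side of $\WQ$ not meeting $\{m,m^{-1}\}$ avoids all of $st(m)^\pm$. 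If instead $n\not\sim m$, then Lemma~\ref{splits} (using that $m$ is principal) places all of $[m]^\pm$ in one side $Q^*$ of $\WQ$. For $z\in lk(m)$: either $z$ commutes with $n$, so $z^\pm\in lk(n)^\pm$; or it does not, in which case $z$ and $m$ are adjacent and both lie outside $lk(n)$, so they lie in a common component $C$ of $\Gamma-lk(n)$ with $|C|\geq 2$, and the single $n$-inseparable set $C^\pm$ lies in the side of $\WQ$ containing $m$, namely $Q^*$. Thus $st(m)^\pm\subseteq Q^*\cup lk(n)^\pm$, and $Q$ is disjoint from $st(m)^\pm$.

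For (B), fix $Q$ as in (A) and fix $j$. The base $m_j\in [m]_{\star}$ does not commute with $n$, for otherwise $n\in st(m_j)=st(m)$, contradicting $[m,n]\neq 1$. Since $\WP_j$ and $\WQ$ both lie in the compatible collection $\Pi$ and their bases do not commute, compatibility (condition~(1) of the definition, condition~(2) being vacuous here) forces one of the four intersections $P_j^{\times}\cap Q^{\times}$ to be empty. Now $m_j\in st(m)$, so by (A) neither $m_j$ nor $m_j^{-1}$ lies in $Q$, and $m_j\notin lk(n)$, so $m_j,m_j^{-1}\in Q^*$; hence $Q^*$ meets both $P_j$ (which contains $m_j$) and $P_j^*$ (which contains $m_j^{-1}$). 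Therefore the empty intersection must be $P_j\cap Q$ or $P_j^*\cap Q$, giving $Q\subseteq P_j^*$ or $Q\subseteq P_j$; intersecting with $V^\pm\setminus st(m)^\pm$ (and using that $Q$ also avoids $lk(m_j)^\pm\subseteq st(m)^\pm$) yields $Q\subseteq \oring P_j^*$ or $Q\subseteq \oring P_j$, which is (B).

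The main obstacle is claim (A), and specifically its two edge cases: observing that when $n\sim_\circ m$ a split of $m$ by $\WQ$ would make $\WQ$ a $\Gamma$-Whitehead partition based at $m$ and hence a member of $\Pi_{[m]_{\star}}$; and, when $n\not\sim m$, propagating the control supplied by Lemma~\ref{splits} from $[m]^\pm$ out to all of $st(m)^\pm$ via the component-of-$(\Gamma-lk(n))$ argument. Once (A) is secured, (B) and the final assembly are straightforward.
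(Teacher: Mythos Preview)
Your proof is correct, and the organization is somewhat different from the paper's, in a way that arguably streamlines the argument.

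The paper works index by index: for each $i$ it uses Lemma~\ref{useful} (compatibility plus $[m_i,n]\neq 1$) to find \emph{some} side of $\WQ$ contained in $P_i$ or $P_i^*$, then invokes Lemma~\ref{splits} to upgrade this to $\oring P_i$ or $\oring P_i^*$; only afterwards does it argue, with a further case analysis ruling out $Q^*\subset\oring P_{i-1}$ and $Q^*\subset\oring P_{i-1}^*$, that one fixed side works for all $i$ simultaneously. You instead front-load this by proving claim~(A), that a specific side $Q$ is disjoint from all of $st(m)^\pm$; once that is in hand, claim~(B) and the minimal-index assembly are immediate, and the paper's trailing case analysis is unnecessary. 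Your explicit treatment of the case $n\sim_\circ m$ (observing that $\WQ$ splitting $m$ would make $m$ a base for $\WQ$, forcing $\WQ\in\Pi_{[m]_\star}$) is also a genuine addition: the paper's appeal to Lemma~\ref{splits} literally requires $n\not\sim m$, so this edge case is glossed over there.

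One small technical remark: Lemma~\ref{splits} as stated hypothesizes the existence of some $\WP\in\Pi$ based at $m$, which is absent when $k=0$. However, the proof of Lemma~\ref{splits} makes no use of that hypothesis, so your invocation of its content is valid in that case as well; you might note this when $k=0$ is possible.
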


\begin{proof} Since $\WQ$ is compatible with each $\WP_i$ and $m$ does not commute with $n$, Lemma~\ref{useful} implies that for each $i$  there is some choice of side $Q$ of $\WQ$  so that  either $Q\subset P_i$ or $Q\subset P_i^*$.  Since the base $m_i$ of $\WP_i$ is principal, $\WQ$ does not split $m$, by  Lemma~\ref{splits}.  Since $Q\subset P_i$ or $Q \subset P_i^*$, this means  $Q$ cannot contain  either $m_i $ or $m_i\inv$,  so in fact either $Q\subset \oring P_i$ or $Q\subset \oring P_i^*$.  We claim we can  use the same side $Q$ for all $i.$   
Replacing all $P_i$  by  $P_i^*$ if necessary, we may assume $Q\subset\oring P_i$ for at least one $i\leq k$ (this is because the $\oring P_i^*$ also form a chain). 

If $Q\subset\oring P_1$ then $Q\subset\oring P_j$ for all $j$ and we are done.  Otherwise, take the minimal $i$ with $Q\subset\oring P_i$. Since $Q\not\subset\oring P_{i-1}$ we must have $Q^*\subset\oring P_{i-1}$ or $Q^*\subset\oring P_{i-1}^*$ or $Q\subset\oring P_{i-1}^*$.  
  If $Q^*\subset\oring P_{i-1}$ then $Q\supset P_{i-1}^*\supset P_i^*$, contradicting $Q\subset P_i$.  If $Q^*\subset\oring P_{i-1}^*$ then $\oring P_i\supset Q\supset P_{i-1}$ so $\WQ$ splits $m_{i-1}$, contradicting $\WQ\in \Pi-\Pi_{[m]_{\star}}$.  
  So we must have $Q\subset P_{i-1}^*$, i.e. $Q\subset\oring P_{i}\cap\oring P_{i-1}^*$. 
   \end{proof}

The strategy in several upcoming proofs will be to replace some $\WQ\in\Pi$ by a ``better" \GW- partition $\WP$ compatible with everything in $\Pi$ except $\WQ$, where the feature that makes $\WP$ better will depend on the context.  The following proposition gives us our main tool for doing this. The setup  for this proposition is illustrated in Figure~\ref{LemmaPQ}.  

\begin{figure}\begin{center}
\begin{tikzpicture}[xscale=1.2]
\draw [rounded corners, fill=blue!15] (-.4,-.4) to (-.4,1.45) to (5,1.45) to (5,-.4) --cycle;  %%P_2
\draw [rounded corners, fill=blue!20] (-.2,-.2) to (-.2,1.25) to (3.2,1.25) to (3.2,-.2) --cycle; %% P_1\cup Q
\draw [rounded corners, fill=blue!25] (0,0) to (0,1) to (1.4,1) to (1.4,0)   --cycle; %%P_1
\draw [ rounded corners, fill=blue!25] (1.6,0) to (1.6,1) to (3,1) to (3,0)   --cycle;%% Q
\node (P1) at (1,.5) {$\oring P_1$};
\draw [fill=white](.4,.5) circle (.2);
\node (m) at (.4,.5) {$m$};
\node (im) at (-1,.6) {$m^{-1}$};
\node (Q) at (2.5,.5) {$Q$};
\node (u) at (2,.5) {$v$};
\node (P2) at (4,.5) {$\oring P_2$};
\node (P) at (1.55,1.05) {$P$};
\end{tikzpicture}
\caption{Proposition~\ref{PQ*}}\label{LemmaPQ}
\end{center}
\end{figure}

\begin{prop}\label{PQ*} Let $m$ be a  principal  vertex  of $\Gamma$,   $\WP_1\in \Pi_m$ and $\WP_2\in \Pi_{[m]_{\star}}$, and choose sides $P_1, P_2$ with $\oring P_1\subset\oring P_2$.  Suppose $u\leq_\circ m$ is contained in $P_2\cap P_1^*$.  Let $Q$ be  a largest subset of $P_2\cap P_1^*$ which is in $\Pi^\pm$ and is based at some $v\sim u$;
 if there are no such subsets, set $Q=\{u\}$.   Let  $\WP$ be the \iGW-partition determined by $P=P_1\cup Q.$ 
 
 If $\WR\in\Pi-\Pi_{[m]_{\star}}$ is not compatible with $\WP$, then some side $R$ of $\WR$ is contained in  $\oring P_2\cap\oring P_1^*$, contains $Q$ and  is based at some $s$ with $s>_\circ u$.\end{prop}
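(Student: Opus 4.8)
The plan is to pin down a side $R$ of $\WR$ lying in the ``gap'' $\oring P_2\cap\oring P_1^*$, to rewrite the failure of compatibility with $\WP$ as two combinatorial conditions on $R$, and then to squeeze out the conclusion from the maximality of $Q$. First I would dispose of the easy case: if $[m,s]=1$ then, since $\WR\notin\Pi_{[m]_{\star}}$ forces $st(s)\neq st(m)$, clause~(2) in the definition of compatibility would make $\WP$ and $\WR$ compatible, contrary to hypothesis; so $m$ and $s$ do not commute. As $\WP_1,\WP_2\in\Pi_{[m]_{\star}}$ and the base $s$ of $\WR\in\Pi\setminus\Pi_{[m]_{\star}}$ does not commute with $m$, Lemma~\ref{position*} applies with $k=2$ to the nest $\emptyset=\oring P_0\subset\oring P_1\subseteq\oring P_2\subset\oring P_3=V^\pm\setminus st(m)^\pm$, yielding a side $R$ of $\WR$ with $R\subseteq\oring P_i\cap\oring P_{i-1}^*$ for some $i\in\{1,2,3\}$; after replacing $s$ by $s^{-1}$ if necessary we may assume $s\in R$. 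Since $\oring P:=\oring P_1\cup Q$ (a disjoint union, as $Q\subseteq P_1^*$) satisfies $\oring P_1\subseteq\oring P\subseteq\oring P_2$, the case $i=1$ would give $R\subseteq\oring P_1\subseteq P$ and the case $i=3$ would give $R\subseteq\oring P_2^*\subseteq P^*$; either way some quadrant of $\WP$ against $\WR$ is empty, making them compatible, a contradiction. Hence $R\subseteq\oring P_2\cap\oring P_1^*$.

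Next I would compute the four quadrants. One has $\oring P\cap\oring P_1^*=Q$, and $R\subseteq\oring P_2\cap\oring P_1^*\subseteq V^\pm\setminus st(m)^\pm$, while $m,m^{-1}\notin lk(s)^\pm$ because $m$ and $s$ do not commute; from this $P\cap R=Q\cap R$, $P^*\cap R=R\setminus Q$, $m\in P\cap R^*$ and $m^{-1}\in P^*\cap R^*$. Because $m$ and $s$ do not commute, $\WP$ and $\WR$ can fail to be compatible only through the failure of clause~(1), i.e. all four quadrants being nonempty; so incompatibility of $\WP$ with $\WR$ is equivalent to the two conditions $Q\cap R\neq\emptyset$ and $R\not\subseteq Q$.

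The heart of the proof is then to promote $Q\cap R\neq\emptyset$ to $Q\subseteq R$. If $Q=\{u\}$ this is immediate. Otherwise $Q$ is a side of some $\WQ\in\Pi$ based at a vertex $v\sim u$, and I would first observe that $v\sim_{\star}u$ with $v\neq u$ is impossible: by Lemma~\ref{abprin} it would force $u$ to be principal, so $u\leq_\circ m$ gives $lk(u)=lk(m)$ and hence $u\sim_\circ m$; but $u$ and $v$ commute, so $[u]$ is abelian, $[u]_\circ$ is a singleton, and $u=m$, contradicting $u\in P_1^*$. Thus $lk(v)=lk(u)$. Also $\WQ\neq\WR$, since otherwise $Q\in\{R,R^*\}$ and both $Q=R$ and $Q=R^*$ contradict the previous paragraph. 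If $v$ and $s$ commute then $v=s$ (else $s\in lk(v)=lk(u)\subseteq lk(m)$ would give $[m,s]=1$), and then $\WQ$ and $\WR$ are distinct compatible \iGW-partitions based at $v$, so by Lemmas~\ref{length} and~\ref{nest} the sides $Q$ and $R$, both containing $v$, are distinct and nested, whence $Q\subsetneq R$. If $v$ and $s$ do not commute, compatibility of $\WQ$ with $\WR$ gives a disjoint pair of sides, and Lemma~\ref{useful} rules out all but $(Q,R^*)$: the pair $(Q,R)$ by $Q\cap R\neq\emptyset$; the pair $(Q^*,R)$ since it would force $R\subseteq(Q^*)^*=Q$; and the pair $(Q^*,R^*)$ since it would force $R^*\subseteq Q$, impossible as $m\in R^*$ but $m\notin Q$. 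So $Q\cap R^*=\emptyset$, and therefore $Q\subseteq(R^*)^*=R$.

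It remains to see $s>_\circ u$, knowing now that $Q\subsetneq R$. Maximality of $Q$ forces $s\not\sim u$: otherwise $R\in\Pi^\pm$ would be a subset of $P_2\cap P_1^*$ based at a vertex $\sim u$ that is strictly larger than $Q$, contradicting the choice of $Q$ (or, when $Q=\{u\}$, the assumption that there is no such subset). Write $v_0=v$ (or $v_0=u$ when $Q=\{u\}$); then $v_0\in Q\subseteq R$ and $lk(v_0)=lk(u)\subseteq lk(m)$. As a side of $\WR$, $R$ is a union of $s$-inseparable sets, and the one containing $v_0$ cannot be a set $C^\pm$ with $|C|\geq 2$, for a neighbour of $v_0$ inside $C$ would lie in $lk(v_0)\subseteq lk(m)\subseteq st(m)$, hence in $st(m)^\pm$, contradicting $C^\pm\subseteq R\subseteq V^\pm\setminus st(m)^\pm$. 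So that inseparable set is $\{v_0\}$ or $\{v_0^{-1}\}$, i.e. the vertex $v_0$ is isolated in $\Gamma\setminus lk(s)$, giving $lk(u)=lk(v_0)\subseteq lk(s)$; and since $s\not\sim u$ this inclusion is strict, so $s>_\circ u$. I expect the main obstacle to be the third paragraph: both the observation that $v\sim_{\star}u$ cannot occur (which is what allows one to treat the base of $Q$ as having link exactly $lk(u)$) and the careful bookkeeping of the four disjoint-side possibilities via Lemma~\ref{useful}. The quadrant computation in the second paragraph is routine but must be set up precisely, since everything afterwards rests on it.
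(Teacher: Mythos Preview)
Your proof is correct and follows the same approach as the paper's: rule out $[m,s]=1$, locate a side $R$ of $\WR$ in $\oring P_2\cap\oring P_1^*$ via Lemma~\ref{position*}, deduce $Q\subsetneq R$ from compatibility with $\WQ$ and incompatibility with $\WP$, then use maximality of $Q$ together with a link argument to obtain $s>_\circ u$. You supply considerably more detail than the paper (the quadrant computation and the case split on $[v,s]$, which the paper compresses to the single line ``Since $\WR$ is compatible with $\WQ$ but not with $\WP$ we must have $R\supset Q$''), and your final step extracts $lk(u)\subseteq lk(s)$ via the structure of $s$-inseparable sets where the paper argues directly that a vertex $x\in lk(u)\setminus lk(s)$ would connect $v$ to $m$ in $\Gamma\setminus lk(s)$---but these are the same idea.
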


 \begin{proof} Note that $\WP$ is based at $m$.  Since $\WR$ is not compatible with $\WP$ and $s\not\sim_{\star}m$, $s$ and $m$ do not commute.
  
Since $s$ and $m$ do not commute, then by Lemma~\ref{position*} $\mathcal R$ has a side $R$ in $\oring P_{1},\oring P_2\cap\oring P_1^*$ or $\oring P_2^*$.  If  either $R\subset\oring P_1$ or  $R\subset\oring P_2^*$ then  $\WR$ is   compatible with $\WP$, so we must have   $R\subseteq\oring P_2\cap\oring P_1^*$.  Since $\WR$ is compatible with $\WQ$ but not with $\WP$ we must have $R\supset Q$.

 Since $Q$ was of maximal size,  $s\not\in [v]= [u]$.   Thus either $v<_\circ s$ or  there is some $x\in lk(u)\subseteq st(m)$ which is not in $lk(s)$. Such an $x$ would be adjacent to both $v$ and $m$ so  $v$ and $m$ would be  in the same component of $\Gamma-lk(s)$, contradicting the fact that $\WR$ separates $m$ from $v$.
  \end{proof}

 \begin{cor}\label{equiv} Let $\Pi$ be a maximal collection of compatible \iGW-partitions and $[m]$ a non-abelian equivalence class of principal vertices of $\Gamma$.  Then for any $m\in [m]$ the subset $\Pi_{[m]}$ can be replaced by a new set of partitions of the same size to obtain a compatible   collection $\Pi'$   with  $\Pi'_{[m]}=\Pi'_m$.
 \end{cor}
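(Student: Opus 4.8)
The plan is to exploit the combinatorics of a non-abelian equivalence class together with Proposition~\ref{PQ*}, replacing the members of $\Pi_{[m]}$ that split the ``wrong'' element of $[m]$ one at a time.

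First I would record the structure of $[m]$. Since $[m]$ is non-abelian its elements $m_1,\dots,m_r$ (with $r\ge 2$) pairwise fail to commute and all share the same link $L$; hence each $m_i$ is isolated in $\Gamma-L$, so $\{m_i\}$ and $\{m_i\inv\}$ are singleton members of the common collection $\bI:=\bI(m_1)=\dots=\bI(m_r)$, and $[m_i]_{\star}=\{m_i\}$. Two consequences matter. Because $m_i$ is principal there is no $s$ with $m_i<_\circ s$, so the argument of Lemma~\ref{splits} shows that no member of $\Pi$ based at some $s\notin[m]$ splits any $m_i$; conversely, using again that $m_i$ is principal, a member of $\Pi$ splits $m_i$ exactly when it is based at $m_i$. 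Thus $\Pi_{[m]}$ is precisely the set of members of $\Pi$ splitting some $m_i$, and the conclusion ``$\Pi'_{[m]}=\Pi'_m$'' is equivalent to saying that every member of $\Pi'$ splitting some $m_i$ also splits $m:=m_1$. Secondly, if $\WR\in\Pi$ has base $s\notin[m]$ with $[s,m]\neq 1$, then $m_1$ being principal forces all of $[m]^\pm$ onto one side of $\WR$; and since $s\not\sim m$ gives $st(s)\neq st(m)$, whenever $[s,m]=1$ such an $\WR$ is automatically compatible with every \GW-partition based at $m$.

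Next I would iterate. Suppose $\WQ\in\Pi_{[m]}\setminus\Pi_m$, so $\WQ$ splits some $m_j$ with $j\neq 1$ but does not split $m$. The crux is to replace $\WQ$ by a \GW-partition $\WP$ based at $m$ that is compatible with every member of $\Pi\setminus\{\WQ\}$ and is distinct from all of them; granting this, $(\Pi\setminus\{\WQ\})\cup\{\WP\}$ is a compatible collection of the same cardinality in which one more partition splits $m$, and iterating at most $|\Pi_{[m]}|$ times produces the desired $\Pi'$. To construct $\WP$ I would apply Proposition~\ref{PQ*} with base $m$ and $u=m_j$ (legitimate since $m_j\le_\circ m$), taking for $\WP_1\subset\WP_2$ a consecutive pair in the nest of Lemma~\ref{nest} on $\Pi_m$ with $m_j\in P_2\cap P_1^*$ --- or, when $\Pi_m$ is empty or fails to straddle $m_j$, the ``endpoint'' partitions with $\oring P_1=\emptyset$ and $\oring P_2=V^\pm\setminus st(m)^\pm$, which a direct rerun of the proof of Proposition~\ref{PQ*} (its argument only needs $\WQ$ compatible with $\WP_1,\WP_2$) still accommodates. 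The resulting $\WP$ is based at $m$, hence splits $m$; and because $m_j$ is principal there is no $s>_\circ m_j$, so the exceptional alternative in Proposition~\ref{PQ*} cannot occur and $\WP$ is compatible with all of $\Pi\setminus\Pi_{[m]_{\star}}=\Pi\setminus\Pi_m$, in particular with every member of $\Pi_{[m]}$ not based at $m$. Compatibility of $\WP$ with the members of $\Pi_m$ is then automatic, since $\oring P_1\subseteq\oring P\subseteq\oring P_2$ with $\WP_1,\WP_2$ consecutive, so $\oring P$ nests with the $m$-side of every member of $\Pi_m$.

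The step I expect to be the main obstacle is showing that $\WP$ is genuinely new, i.e. $\WP\notin\Pi\setminus\{\WQ\}$, which is what keeps the cardinality fixed: since $\WP$ splits $m$ the only danger is $\WP\in\Pi_m$, and the construction forces $\oring P_1\subsetneq\oring P\subseteq\oring P_2$, so the only failure would be $\oring P=\oring P_2$, which occurs precisely when the maximal chunk $Q$ of Proposition~\ref{PQ*} is all of $\oring P_2\cap\oring P_1^*$. Ruling this out --- by choosing the consecutive pair $(\WP_1,\WP_2)$, equivalently the partition $\WQ$ to be replaced, so that $\oring P_2\cap\oring P_1^*$ is not itself a side of any member of $\Pi$ --- together with checking the degenerate endpoint cases, is where the real care is needed; everything else is formal from Lemmas~\ref{splits} and \ref{nest} and Proposition~\ref{PQ*}.
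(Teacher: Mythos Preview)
Your overall strategy matches the paper's: iterate, at each step picking some $\WQ\in\Pi_{[m]}\setminus\Pi_m$ and using Proposition~\ref{PQ*} (with $[m]_\star=\{m\}$ and the principality of the base of $\WQ$ to kill the exceptional alternative) to produce a replacement $\WP$ based at $m$. You also correctly identify the one real issue: showing $\WP$ is \emph{new}, i.e.\ $\WP\notin\Pi$.

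However, your proposed remedy --- choosing the consecutive pair $(\WP_1,\WP_2)$, or equivalently the $\WQ$ to be replaced, so that the whole layer $\oring P_2\cap\oring P_1^*$ is not itself a side of some member of $\Pi$ --- can fail. A minimal bad situation is when $\Pi_m=\{\WP_1\}$ and $\Pi_{[m]}\setminus\Pi_m=\{\WQ\}$ with the side $Q$ of $\WQ$ equal to $P_1\setminus\{m\}$: then there is only one $\WQ$ to replace and only one relevant layer, and your construction yields $\{m\}\cup Q=P_1\in\Pi_m$, which is not new. So the step you flag as ``the main obstacle'' is a genuine gap, and the workaround you sketch does not close it.

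The paper closes the gap differently and more simply. After choosing $Q$ maximal in the layer $P_i\cap P_{i-1}^*$, it drops down one level: take $M$ a maximal side of a member of $\Pi_{[m]}$ that is \emph{properly} contained in $Q$ (setting $M=\{n\}$ if none exists), and let $\WP$ be determined by $P_{i-1}\cup M$. Then $P_{i-1}\subsetneq P_{i-1}\cup M\subsetneq P_{i-1}\cup Q\subseteq P_i$, so $\WP$ is strictly between consecutive members of the $\Pi_m$-nest and hence is automatically new. To see $\WP$ is compatible with everything in $\Pi\setminus\{\WQ\}$, one applies Proposition~\ref{PQ*} to the collection $\Pi\setminus\{\WQ\}$, in which $M$ has become the maximal side of the required type; principality of $n$ then rules out any obstructing $\WR$. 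This ``second-largest'' trick is the missing idea in your argument.
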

 
 \begin{proof} Fix $m\in [m]=[m]_\circ$   and suppose $\Pi_m=\{\WP_1,\ldots\WP_k\}\neq\emptyset$. Let $P_1\subset\ldots\subset P_k$ be the sides of the $\WP_i$ containing $m$.  
 
 Suppose $\WQ\in \Pi_{[m]}\setminus \Pi_m$ is based at $n\sim m$.  Since $[m]$ is nonabelian, $n$ does not commute with $m$, so it must have a side $Q$ contained in $P_i\cap P_{i-1}^*$ for some $i$.  Take $Q$   maximal with respect to inclusion among all such sides in $P_i\cap P_{i-1}$.    Now take $M$ maximal among all such sides properly contained in $Q$; if there is no such $M$, set $M=\{n\}$.  By Proposition~\ref{PQ*} (applied to $[m]_{\star}=\{m\}$), if some partition $\WR\in \Pi\setminus \Pi_m$ is not compatible with the \GW-partition $\WP$ determined by $P_i\cup M$, then either it is equal to $\WQ$ or it is based at some $s$ with $n<_\circ s$.  i.e. $lk(n)\subsetneq lk(s)$.  But $n$ is principal, so there is no such $s$.  Since $\WQ$ is the only partition in $\Pi$ not compatible with $\WP$, we may replace $\WQ$ by $\WP$ to obtain a new collection of the same size.  
We can continue this process until $\Pi_{[m]}=\Pi_m$.  
 \end{proof}

 \begin{definition} A \GW-partition $\WP$ based at $m$ is {\em principal} if  $m$  is a principal vertex of $\Gamma$.  
 \end{definition}

 \begin{thm}\label{ML subgroup} 
 Let $L$ be the set of principal vertices of $\Gamma$.  Then $\UG$ contains a free abelian subgroup of rank $M(L)$.  
\end{thm}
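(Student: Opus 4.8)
The plan is to start from a compatible collection $\Pi=\{\WP_1,\ldots,\WP_N\}$ of \GW-partitions, each based at a principal vertex, with $N=M(L)$, and show that a suitable associated set of $\Gamma$-Whitehead automorphisms freely generates a free abelian subgroup. First I would normalize $\Pi$: for each non-abelian equivalence class $[m]$ of principal vertices, use Corollary~\ref{equiv} to replace $\Pi_{[m]}$ by an equally large family all based at a single vertex $m$, leaving the rest of $\Pi$ unchanged; the result is still compatible, still of size $N$, still based at principal vertices, and now every $\WP_i$ has a single well-defined base vertex $m_i$. Choose a side $P_i\ni m_i$ of each $\WP_i$, put $\phi_i=\phi(P_i,m_i)\in Aut(\AG)$, and let $\varphi_i=\varphi(\WP_i,m_i)\in\UG$ be its image. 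I claim $G=\langle\varphi_1,\ldots,\varphi_N\rangle$ is free abelian of rank $N$. Commutativity is immediate from Theorem~\ref{commute}: if $[m_i,m_j]=1$ there is nothing to check, and if $[m_i,m_j]\neq 1$, then after normalization $m_i\not\sim m_j$, so since both are principal Lemma~\ref{splits} gives that $\WP_j$ does not split $m_i$ and $\WP_i$ does not split $m_j$; together with compatibility of $\WP_i$ and $\WP_j$ this makes $\varphi_i$ and $\varphi_j$ commute. Hence $G$ is abelian of rank at most $N$.

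\textbf{Freeness.} The substance of the proof is to show that if $g=\phi_1^{a_1}\cdots\phi_N^{a_N}$ is inner in $Aut(\AG)$ then all $a_i=0$. Group $\Pi$ by base vertex, $\Pi=\bigsqcup_m\Pi_m$; within any one group the generated subgroup is already free abelian of the right rank by Proposition~\ref{independent} (equal bases) or Proposition~\ref{independent abelian} (a common abelian equivalence class). I would upgrade this to freeness of $G$ by running the exponent-tracking argument from the proof of Proposition~\ref{independent}, one equivalence class of bases at a time. Fix such a class $[m]$ with $m$ principal and let
\[
\emptyset=\oring P_0\subset\oring P_1\subseteq\cdots\subseteq\oring P_k\subset\oring P_{k+1}=V^\pm\setminus st(m)^\pm
\]
be the nest of the $[m]_{\star}$-sides of $\Pi$ provided by Lemma~\ref{comp}. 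The structural input is that every partition of $\Pi$ not based in $[m]$ sits relative to this nest in a controlled way: if its base commutes with $m$ but is not $\sim$-equivalent to $m$, then all of $[m]^\pm$ lies in its $lk$-part, so its generator fixes every vertex of $[m]$; and if its base does not commute with $m$, then it splits no vertex of $[m]$ (Lemma~\ref{splits}, using principality) and one of its sides is confined to a single gap $\oring P_i\cap\oring P_{i-1}^{*}$ (Lemma~\ref{position*}). One then evaluates $g$ on test generators chosen first in the outermost region $\oring P_k^{*}$ and then successively in the inner gaps, exactly as in Proposition~\ref{independent}: using that $g(u)$ must be conjugate to $u$, and that on these generators the foreign factors contribute only conjugations rather than genuine length changes, the relevant partial sums of the exponents $a_i$ attached to $\Pi_{[m]}$ are forced to vanish, hence those $a_i=0$. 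Doing this over all base classes yields $g=\mathrm{id}$, so all $a_i=0$ and $G$ is free abelian of rank $N=M(L)$.

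\textbf{Expected main obstacle.} The delicate part is the cross-class freeness: two partitions based at non-commuting principal vertices lying in different equivalence classes have genuinely interacting automorphisms, and one must check that for every possible position of a test generator's inverse the foreign (commuting) contributions never leave $g(u)$ merely conjugate to, but not equal to, $u$, so that the identity is the only possibility. Arranging the test generators to lie simultaneously in the right region for each base class, and keeping precise track of which multipliers can and cannot appear in $g(u)$, is the bookkeeping I expect to demand the most care; the facts that make it go through are principality via Lemma~\ref{splits} (a foreign partition never splits a principal vertex's equivalence class) and the confinement of foreign non-commuting partitions to single gaps of a chain via Lemma~\ref{position*}.
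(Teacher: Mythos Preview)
Your normalization and commutativity arguments match the paper exactly. The divergence is in the freeness argument.

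The paper does \emph{not} iterate the Proposition~\ref{independent} exponent-tracking class by class. Instead it argues by global contradiction. Grouping the factors by base vertex as $\Phi=\Phi_1\cdots\Phi_\ell$, it reduces (via Propositions~\ref{independent} and~\ref{independent abelian}) to the case where $\ell>1$, no $\Phi_j$ is trivial, and not all bases have the same star. Then it picks $v_1,v_2$ with $st(v_1)\neq st(v_2)$, chooses $x\in st(v_2)\setminus st(v_1)$, and (flipping sides using Lemma~\ref{GWautos}) arranges $\Phi(x)=xU$; since $\Phi$ is conjugation by some $W$ this forces $U=1$ and hence $W$ lies in the centraliser of $x$, so $v_1$ does not occur in $W$. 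It then takes $y$ with $\Phi_1(y)=v_1^a y v_1^b$ nontrivially, uses Corollary~\ref{conjugation} to see that the remaining factor $\Psi=\Phi_2\cdots\Phi_\ell$ sends $v_1$ to a conjugate of $v_1$, and concludes that $\Phi(y)=W^{-1}yW$ would have to involve $v_1$, a contradiction.

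Your proposed route has a genuine gap at exactly the point you flag as the main obstacle. The assertion that on a test generator $u$ chosen in a gap $\oring P_i\cap\oring P_{i-1}^*$ of the $[m]$-nest ``the foreign factors contribute only conjugations'' is not true in general. Lemma~\ref{position*} tells you that a foreign partition $\WR$ (base $s$, $[s,m]\neq 1$) has a side $R$ confined to \emph{some} gap of the nest, but nothing prevents $R$ from lying in the very gap containing your test generator $u$; then $\phi(R,s)$ can send $u\mapsto us^{-1}$ or $u\mapsto su$, not a conjugate of $u$. Once that happens, subsequent $[m]$-based factors act on this $s$ (by Corollary~\ref{conjugation} only by conjugation), but the resulting word $g(u)$ is a genuine mix of several multipliers, and the condition ``$g(u)$ is conjugate to $u$'' no longer isolates the $[m]$-exponents in any obvious way. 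So the inductive peeling you describe does not go through without substantial additional input, and the paper's argument sidesteps this entirely by working with the conjugating element $W$ rather than with generator-by-generator exponent sums.
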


\begin{proof}    Let $\Pi$ be a maximal compatible collection of principal \GW-partitions, i.e. a collection of size $M(L)$. 

By Corollary~\ref{equiv} we may  assume    $\Pi_{[m]}= \Pi_m$ for all nonabelian equivalence classes $[m]$.   
Using $m$ as multiplier for each $\WP\in \Pi_{[m]}$,  the associated outer $\Gamma$-Whitehead automorphisms  $\varphi(\WP,m)$ pairwise commute. 

If $\WP$ and $\WQ$ in $\Pi$ are based at $m$ and $n$ with $[m,n]=1$ then $\varphi(\WP,m)$ and $\varphi(\WQ,n)$ commute.  

 If $\WP$ and $\WQ$ in $\Pi$ are based at $m$ and $n$ with $[m,n]\neq1$  
  then Lemma~\ref{splits} implies that  $\WP$ does not split $n$ and $\WQ$ does not split $m$, so  $\varphi(\WP,m)$ and $\varphi(\WQ,n)$ commute by Theorem~\ref{commute}.

We now have a collection of pairwise-commuting infinite-order outer automorphisms $\varphi(\WP_i,m_i)$ of size equal to $M(L)$, and we need to show they are independent.  Choose sides $P_i$ for $\WP_i$ containing $m_i$, and set   
 $$\Phi=\phi(P_1,m_1)^{n_1}\ldots\phi(P_k,m_k)^{n_k}.$$ 
We must show that if $\Phi$ is inner then all $n_i=0$.  
 
 Let $\{v_1,\ldots,v_\ell\}$ be the distinct  $m_i$ and define $$\Phi_j= \prod_{m_i=v_j}\phi(P_i,m_i)^{n_i},$$  so $\Phi=\Phi_1\ldots\Phi_\ell$. By  Proposition~\ref{independent} if any of the $\Phi_j$ are inner then the associated $n_i$ are zero; in particular, if $\ell=1$ we are done.  So we may assume no $\Phi_i$ is trivial and $\ell>1$.

If all $v_i$ have the same star, then  we are done by Proposition~\ref{independent abelian}.  Otherwise  without loss of generality we may assume there is $x\in st(v_2)$ with $x\not\in st(v_1)$.   

Replacing $\phi(P_i,m_i)$ by   $\phi(P_i^*,m_i\inv)$  whenever $x\in P_i$ (which doesn't affect their images in $\UG$) we may assume    $\Phi(x)=xU$ for some word $U$ in the $m_i$. Since $\Phi$ is conjugation by some element $W$, this implies $U=1$, so  $W$  is in the centralizer of $x$, which is generated by $st(x)$. Since $v_1\not\in st(x),$  $v_1$ does not appear in any reduced expression for $W.$

Since $\Phi_1$ is not trivial   there is some vertex $y$ with $\Phi_1(y)=v_1^ayv_1^b$, where $a$ and $b$  are not both zero.  If we set  $\Psi= \Phi_2 \cdots\Phi_\ell$ then  $\Phi(y)= \Psi\Phi_1(y)= \Psi(v_1) ^a\Psi(y) \Psi(v_1)^b.$   

By Corollary~\ref{conjugation}, each $\phi(P_i,m_j)$ acts either trivially or as conjugation by $m_i$ on each $m_j$. Thus $\Psi(v_1)$ is conjugate to $v_1$ by a word $U$  in $v_2,\ldots,v_\ell$.  So we have
\begin{align*}
\Phi(y)&= \Psi\Phi_1(y)\\
           &= \Psi(v_1)^a\Psi(y) \Psi(v_1)^b\\
           &= U\inv v_1^aU\Psi(y) U\inv v_1^bU
\end{align*}
We also know that $\Phi(y)=W\inv y W$ for some $W$ that does not contain the letter $v_1.$ But $v_1$ does not commute with $y$  so in order for the powers of $v_1$ in the expression for $\Phi(y)$ above to cancel it must be true that a reduced word representing $\Psi(y)$ does not contain $y$.  In order for this to happen some $\phi(P_i,m_i)$ must have multiplier $m_i=y$. But if $y=m_i$ then $\Psi(y)$ is conjugate to $y$  by Corollary~\ref{conjugation} so the reduced word representing $\Psi(y)$  {\em does} contain $y$, giving a contradiction.  
\end{proof}

\begin{definition} Suppose $ \varphi(\WP_1,m_1), \ldots, \varphi(\WP_k,m_k)$ generate a free abelian subgroup of $\UG$, and let $\Pi=\WP_1,\ldots,\WP_k$.  
Suppose  $[m]$ is  abelian and $\Pi_{[m]}\neq \emptyset$.    Then $\Pi$ is {\em $[m]$-complete} if it contains every \GW-partition $\WQ$ such that
 \begin{itemize}
 \item $\WQ$ is based at  some $n\in[m]$ 
\item  $\varphi(\WQ,n)$ commutes with all $\varphi(\WP_i,m_i)$.  
 \end{itemize}
 \end{definition}

If $\Pi$ is not $[m]$-complete, it can be completed  by adding all possible $\WQ$ satisfying the above conditions.  The base $n$ of any such $\WQ$  is unique since $[m]=[m]_{\star}$, so $\varphi(\WQ,n)$ is determined by $\WQ$.  All of these  $\varphi(\WQ,n)$  can be added to  $\{\varphi(\WP_i,m_i)\}$ to generate an abelian subgroup of possibly larger rank.

{\begin{lem}\label{notcomp}
Suppose  $\varphi(\WP_1,m_1), \ldots, \varphi(\WP_\ell,m_\ell)$ generate a free abelian subgroup  $G$ and  $\Pi=\{\WP_1,\ldots,\WP_\ell\}$ is $[m]$-complete for some abelian $[m]$.  Then $\Pi$ contains a subcollection $\Pi^c$ such that $\Pi^c_{[m]}$ is a compatible collection of \iGW-partitions and the $\varphi(\WP_i,m_i)$ for $\WP_i\in \Pi^c$  generate the same abelian subgroup $G$.  
\end{lem}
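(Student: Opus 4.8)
The plan is to leave the part of $\Pi$ based outside $[m]$ alone and to replace $\Pi_{[m]}$ by a compatible subcollection with the same span. Since $G$ is generated by all of the $\varphi(\WP_i,m_i)$, it is enough to produce a compatible $\Pi^c_{[m]}\subseteq\Pi_{[m]}$ such that the outer automorphisms coming from $\Pi^c_{[m]}$ generate the same subgroup $H$ of $\UG$ as those coming from $\Pi_{[m]}$, and then take $\Pi^c:=\Pi^c_{[m]}\cup(\Pi\setminus\Pi_{[m]})$. As a preliminary remark: because the vertices of the abelian class $[m]$ pairwise commute, any two $\iGW$-partitions with bases in $[m]$ have commuting outer automorphisms (Theorem~\ref{commute}); hence $[m]$-completeness says precisely that $\Pi_{[m]}$ is the set of \emph{all} $\iGW$-partitions based in $[m]$ whose outer automorphism commutes with every generator of $G$. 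In particular, by Lemma~\ref{abeliancompatible}(3), $\Pi_{[m]}$ is closed under the exchange $\WP\rightsquigarrow\WP_{v,w}$ for $v,w\in[m]$, and $\Pi_n\neq\emptyset$ for every $n\in[m]$ (since $\Pi_{[m]}\neq\emptyset$).

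Fix $n\in[m]$. An $\iGW$-partition based at $n$ is determined by its side $P$ containing $n$, a subset of $\bI(n)$ with $\{n\}\in P$ and $\{n^{-1}\}\notin P$; set $\mathcal F_n=\{P:\WP\in\Pi_n\}$ and $\widehat{\mathcal F}_n=\mathcal F_n\cup\{\widehat0,\widehat1\}$ with $\widehat0=\{n\}$ and $\widehat1=\bI(n)\setminus\{n^{-1}\}$ (the two extra subsets are not thick, so are not $\iGW$-partitions, but they still define honest automorphisms $\phi(\,\cdot\,,n)$ by the usual formula). Using Theorem~\ref{commute} to rephrase ``$\varphi(\WP,n)$ commutes with $\varphi(\WP_i,m_i)$'' as ``$\WP$ is compatible with $\WP_i$ and does not split $m_i$'' (the remaining clause ``$\WP_i$ does not split $n$'' holds uniformly, as $\Pi_n\neq\emptyset$), one checks that each defining condition on $P$ is preserved by $\cup$ and $\cap$; combined with $[m]$-completeness (a thick set satisfying all the conditions again lies in $\Pi_n$, and the only non-thick sets that can occur as $P\cup Q$ or $P\cap Q$ are $\widehat1$ and $\widehat0$ respectively), this makes $\widehat{\mathcal F}_n$ a finite distributive lattice (a sublattice of $2^{\bI(n)}$) with bottom $\widehat0$ and top $\widehat1$. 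The decisive point is that $\phi(\widehat0,n)$ is the identity while $\phi(\widehat1,n)$ sends $v\mapsto nvn^{-1}$ for $v\notin st(n)$ and fixes $st(n)$, i.e.\ $\phi(\widehat1,n)$ is conjugation by $n$; thus, writing $[P]$ for the image of $\phi(P,n)$ in $Out(\AG)$, \emph{both} $[\widehat0]$ and $[\widehat1]$ are trivial. The assignment $P\mapsto[P]$ is a valuation, $[P\cup Q]\,[P\cap Q]=[P]\,[Q]$ (the fold ``inclusion--exclusion'' identity, a routine extension of the composition formula used in the proof of Proposition~\ref{dependent}). Since the values of a valuation along any maximal chain of a finite distributive lattice generate the same subgroup as all of its values, and $[\widehat0]=[\widehat1]=\mathrm{id}$, a maximal chain of $\mathcal F_n$ (a maximal chain of $\widehat{\mathcal F}_n$ with its two endpoints removed) generates $H_n:=\langle\varphi(\WP,n):\WP\in\Pi_n\rangle$.

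Now glue these per-base chains together. Passing to link-free sides $\oring P=P\setminus\{n\}\subseteq V^\pm\setminus st(n)^\pm$ — a set independent of $n\in[m]$ since $st(n)=st(n')$ there — note the exchange $\WP\rightsquigarrow\WP_{v,w}$ fixes $\oring P$, so closure of $\Pi_{[m]}$ under exchange forces $\oring{\mathcal F}_n:=\{\oring P:\WP\in\Pi_n\}$ to be the same poset $\oring{\mathcal F}$ for all $n\in[m]$. Fix one maximal chain $\oring P^{(1)}\subsetneq\dots\subsetneq\oring P^{(t)}$ of $\oring{\mathcal F}$, and let $\Pi^c_{[m]}$ be the set of $\iGW$-partitions whose base-side is $\oring P^{(j)}\cup\{n\}$, over all $j$ and all $n\in[m]$; each such partition lies in $\Pi_n\subseteq\Pi$. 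For a fixed $n$ these form a maximal chain of $\Pi_n$ and so generate $H_n$, whence the outer automorphisms from $\Pi^c_{[m]}$ generate $\sum_{n\in[m]}H_n=H$. Finally $\Pi^c_{[m]}$ is compatible: all its members have link-free sides in the single chain $\{\oring P^{(j)}\}$, and two $\iGW$-partitions $\WP$ (based at $n$) and $\WQ$ (based at $n'$) with $\oring P\subseteq\oring Q$ satisfy $P\cap Q^*=\emptyset$ — indeed $\oring P\subseteq\oring Q\subseteq Q$, while $n\in lk(n')^\pm$ (because $n\sim_{\star}n'$ and $n\neq n'$) gives $n\notin Q^*$ — so any two of them are compatible, whether or not $n=n'$; this is the converse of Lemma~\ref{comp}.

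The step I expect to be the main obstacle is the verification, in the second paragraph, that $\widehat{\mathcal F}_n$ is closed under $\cup$ and $\cap$ and — above all — that $\phi(\widehat1,n)$ is inner. It is exactly this inner-ness that allows a maximal chain of $\mathcal F_n$, which is one shorter than a maximal chain of the ambient lattice $\widehat{\mathcal F}_n$, to nevertheless generate the whole of $H_n$; without it the conclusion of the lemma would be false.
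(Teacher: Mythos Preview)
Your argument is correct and, at the structural level, coincides with the paper's: both replace $\Pi_{[m]}$ by the partitions lying over a single maximal chain of link-free sides (aligned across $n\in[m]$ via the exchange operation of Lemma~\ref{abeliancompatible}), and both use $[m]$-completeness to guarantee that the auxiliary partitions produced along the way already belong to $\Pi$.  The difference is one of packaging.  The paper chooses a maximal compatible $\Pi_0\subseteq\Pi_{[m]}$ first, observes that $(\Pi_0)_n$ is a chain, and for each stray $\WQ$ writes down the explicit telescoping identity
\[
\varphi(\WQ,n)=\Bigl(\prod_{\ell=i+2}^{j}\varphi(\WP'_\ell,n)\,\varphi(\WP_{\ell-1},n)^{-1}\Bigr)\varphi(\WP'_{i+1},n),
\qquad P'_\ell=P_{\ell-1}\cup(P_\ell\cap Q),
\]
checking that the $\WP'_\ell$ land in $\Pi_0$.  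You instead prove the stronger closure statement that $\widehat{\mathcal F}_n$ is a distributive sublattice of $2^{\bI(n)}$, invoke the modular identity $[P][Q]=[P\cup Q][P\cap Q]$, and appeal to the general fact that a valuation on a finite distributive lattice is generated along any maximal chain; the paper's formula is exactly the inductive step of that general fact in this specific lattice.  Your framework has the virtue of isolating the two genuinely nontrivial points --- closure of $\mathcal F_n$ under $\cup,\cap$ (which you rightly flag; it uses Lemma~\ref{useful} to reduce compatibility with each $\WP_i$ to the dichotomy $Y_i\subset P$ or $P\subset X_i$, manifestly stable under $\cup,\cap$) and the inner-ness of $\phi(\widehat 1,n)$ --- while the paper's version has the advantage of being self-contained and not relying on an unreferenced lattice lemma.
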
}

  \begin{proof}  
Let  $\Pi_0$ be maximal compatible subcollection  of   $\Pi_{[m]}$.  If $\WP\in\Pi_0$ is based at $v\in[m]$ then by Lemma~\ref{abeliancompatible} $\WP_{v,w}$ is also in $\Pi_0$ for every $w\in[m]$, since $\Pi$ is $[m]$-complete.    
 
Now consider $\WQ\in \Pi_{[m]}\setminus \Pi_0$,  based at some $n\in [m]$.  By Lemma~\ref{nest}  we may choose sides   $P_i$ of the $\WP\in \Pi_0$ based at $n$ such that 
  $$\{n\}=  P_0 \subset   P_1\subset\ldots\subset P_k\subset  P_{k+1} =V^\pm\setminus lk(n)^\pm\setminus \{n\inv\}.$$

 Take the largest $i\geq 0$ such that the side $Q$ of $\WQ$ containing $n$ also contains $P_i$, and the smallest $j\leq k+1$ such that $Q\subset P_j$.    For each $\ell$ with $i+1\leq\ell\leq j$ let $C_\ell=P_\ell\cap Q$ (see Figure~\ref{fig:notcomp}). 
 
\begin{figure} 
\begin{center}\begin{tikzpicture}
\draw [rounded corners, fill=blue!15] (-.2,-.2) to (-.2,1.2) to (1.6,1.2) to (1.6,.5) to (5.3,.5) to (5.3,-.2)    --cycle; 
\draw [rounded corners] (0,0) to (0,1) to (1.4,1) to (1.4,0)   --cycle; 
\node (P1) at (1,.7) {$P_i$};\node (n) at (.4,.5) {$n$};
 \node (v) at (-1.2,.5) {$n^{-1}$};
\draw [rounded corners] (-.4,-.4) to (-.4,1.4) to (2.5,1.4) to (2.5,-.4)   --cycle; 
\node (Pi1) at (2,.9) {$P_{i+1}$};
\node (ci1) at (2,.2) {$C_{i+1}$};
\draw [rounded corners] (-.6,-.6) to (-.6,1.6) to (4.5,1.6) to (4.5,-.6)   --cycle; 
\node (Pj1) at (4,.9) {$P_{j-1}$};
\node (Pj1) at (4,.2) {$C_{j-1}$};
\node (dots) at (3,.9) {$\cdots$};
\draw [rounded corners] (-.8,-.8) to (-.8,1.8) to (5.5,1.8) to (5.5,-.8)   --cycle; 
\node (Pj1) at (5,.9) {$P_{j}$};
\node (Pj1) at (5,.2) {$C_{j}$};
\end{tikzpicture}
\caption{Proof of Lemma~\ref{notcomp}}\label{fig:notcomp}
\end{center}
\end{figure}

 Then $P^\prime_{\ell}=P_{\ell-1}\cup C_\ell$ is a \GW-subset, and the \GW-partition $\WP^\prime_\ell$ it determines is compatible with all $\WP\in \Pi_0$. Furthermore, since  $\varphi(\WQ,n)$ commutes with all   $\varphi(\WP_j,m_j)$ it follows that $\varphi(\WP^\prime_\ell,n)$ does as well, so   $\WP'_\ell$  must be in  $\Pi_0$ since $\Pi$ is $[m]$-complete and $\Pi_0$ is maximal.    Now 
$$
 \varphi(\WQ,n)=\left(\prod_{\ell=i+2}^j\varphi(\WP_{\ell}^\prime,n)\varphi(\WP_{\ell-1},n)^{-1}\right)\varphi(\WP_{i+1}^\prime,n)
 $$
so  we may eliminate $\WQ$ from $\Pi_{[m]}$ without affecting the abelian subgroup $G$.   Continuing, we   eliminate all partitions in $\Pi_{[m]}$ that are not in $\Pi_0$.  Then $\Pi^c=\Pi\setminus \Pi_{[m]}\cup \Pi_0$ is the required collection.   
  \end{proof}

\begin{thm}\label{largestrank} Any free abelian subgroup of $\UG$ generated by $\Gamma$-Whitehead automorphisms has rank at most $M(L)$. \label{mostML}
\end{thm}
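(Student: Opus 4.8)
The plan is to transform the given set of $\Gamma$-Whitehead generators into a compatible collection of \emph{principal} \GW-partitions without decreasing the rank of the subgroup it generates. Once that is done the theorem follows at once: by the definition of $M(L)$ a compatible collection of principal \GW-partitions has at most $M(L)$ members, so the subgroup it generates has rank at most $M(L)$, and that rank is at least the rank of the original subgroup. So suppose $G=\langle\varphi(\WP_1,m_1),\dots,\varphi(\WP_k,m_k)\rangle$ is free abelian and put $\Pi=\{\WP_1,\dots,\WP_k\}$.

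The first step disposes of the abelian equivalence classes. For each abelian class $[m]$ with $\Pi_{[m]}\neq\emptyset$, enlarge $\Pi$ until it is $[m]$-complete, as in the discussion preceding Lemma~\ref{notcomp}; this only enlarges the generated subgroup, which is harmless for an upper bound on its rank, and it stays abelian since any two added $\varphi(\WQ,n)$ with a common base commute unconditionally by Theorem~\ref{commute}. Then Lemma~\ref{notcomp} deletes partitions down to a subcollection in which $\Pi_{[m]}$ is compatible while the generated subgroup is unchanged. Since a generating set of the \emph{same} subgroup is now in play, the property ``commutes with all generators'' is unaffected, so $[m']$-completeness survives for every other abelian class; hence we may do this one class at a time until $\Pi_{[m]}$ is compatible for every abelian $[m]$.

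The crux is to make every base principal while keeping $\Pi$ compatible and without shrinking $G$. The idea is to follow the proof of Corollary~\ref{equiv}: if $\WP_i$ is based at a non-principal $m_i$ there is $s$ with $m_i<_\circ s$, and Lemma~\ref{splits} then controls how every other partition of $\Pi$ splits the vertices of $[m_i]$; combined with Proposition~\ref{PQ*} and the chains of sides provided by Lemmas~\ref{nest} and~\ref{comp}, this pins the offending side inside a single interval of such a chain, which is exactly the configuration one needs in order to replace $\WP_i$ by a \GW-partition based at a strictly $<_\circ$-larger vertex and compatible with $\Pi\setminus\{\WP_i\}$. Since $<_\circ$ has no infinite ascending chains this terminates with all bases principal; and a similar merging (the argument of Corollary~\ref{equiv} applied to a non-abelian principal class) together with Proposition~\ref{dependent} applied to each single base removes any remaining incompatibilities between partitions that share a base. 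This is where essentially all the work lies: Corollary~\ref{equiv} had the luxury of a \emph{maximal compatible} $\Pi$, so the improved partition was automatically already present and only sizes needed tracking, whereas here one knows only that the $\varphi(\WP_i,m_i)$ pairwise commute, so each replacement must be constructed by hand, shown compatible with the rest of $\Pi$, and --- the delicate point --- shown to leave $G$ unchanged by writing the new $\Gamma$-Whitehead automorphism explicitly in terms of the old ones via the factorizations into folds and partial conjugations that appear in the proofs of Propositions~\ref{dependent} and~\ref{dependent abelian} and of Lemma~\ref{notcomp}; and one must check that compatibility repaired at one step is not spoiled at the next.

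Finally, the resulting $\Pi$ is a compatible collection of principal \GW-partitions: within a common equivalence class compatibility has just been arranged, and for bases $m_i,m_j$ in different classes either $[m_i,m_j]\neq1$, so compatibility is forced by the commuting of $\varphi(\WP_i,m_i)$ with $\varphi(\WP_j,m_j)$ via Theorem~\ref{commute}, or $[m_i,m_j]=1$ with $st(m_i)\neq st(m_j)$, so compatibility holds by definition (if instead $st(m_i)=st(m_j)$ and $m_i\neq m_j$, the two bases lie in a common abelian class, which was already handled). Hence $|\Pi|\le M(L)$, and the rank of the possibly enlarged subgroup is at most $|\Pi|$, so $\mathrm{rank}(G)\le M(L)$.
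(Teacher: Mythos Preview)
Your first step (using $[m]$-completion and Lemma~\ref{notcomp} to make each $\Pi_{[m]}$ compatible) is correct and matches the paper.

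The gap is in your second step. You claim a non-principal $\WQ$ based at $u$ can be replaced by a principal partition based at some $m>_\circ u$ ``without shrinking $G$,'' appealing to the factorizations in Propositions~\ref{dependent}, \ref{dependent abelian} and Lemma~\ref{notcomp}. But those results only express a $\Gamma$-Whitehead automorphism with multiplier $n$ as a product of others with the \emph{same} multiplier (or one in $[n]_{\star}$). They give no way to write $\varphi(\WP',m)$, which uses the principal multiplier $m$, in terms of automorphisms with multiplier $u$; and conversely, once $\varphi(\WQ,u)$ is removed and no generator with multiplier in $[u]$ remains, there is no mechanism to recover it from the new generators. So the assertion that $G$ is preserved (or even not shrunk) is unjustified, and this is precisely the content of the theorem. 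You yourself flag this as ``the delicate point,'' but the tools you cite do not supply it.

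The paper does not try to preserve $G$ at this stage; it argues by contradiction. Among compatible collections of size $>M(L)$ whose associated outer automorphisms pairwise commute, take one with the fewest non-principal partitions, and let $\WQ$ be non-principal with base $u$ maximal among non-principal bases. The construction via Proposition~\ref{PQ*}, together with minimality, then forces $P_{i-1}\cup Q=P_i$ for suitable principal $\WP_{i-1},\WP_i$ (based at $m$) sandwiching $Q$. Now Theorem~\ref{commute} applied to the commuting pair $\varphi(\WP_i,m),\varphi(\WQ,u)$ says $\WP_i$ does not split $u$, whence $u^{-1}\in P_i=P_{i-1}\cup Q$; since $u^{-1}\notin Q$ this gives $u^{-1}\in P_{i-1}$, while $u\in Q\subset P_{i-1}^*$. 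Thus $\WP_{i-1}$ \emph{does} split $u$, contradicting Theorem~\ref{commute} for the commuting pair $\varphi(\WP_{i-1},m),\varphi(\WQ,u)$. The contradiction comes from the splitting clause of Theorem~\ref{commute}, and this is the idea your argument is missing.
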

\begin{proof} Suppose $\varphi(\WP_1,m_1),\ldots, \varphi(\WP_k,m_k)$ generate a free abelian subgroup $G$ of rank $r>M(L)$, and let $\Pi=\{\WP_1,\ldots,\WP_k\}$.    If $[m_i,m_j]\neq 1$ then $\WP_i$ is compatible with $\WP_j$  by Theorem~\ref{commute}.  If $[m_i,m_j]=1$ but $m_i\not \sim_{\star}m_j$  then $\WP_i$ is compatible with $\WP_j$ by the definition of compatibility.   So the only incompatible pairs in $\Pi$ live in the same $\Pi_{[m]}$ for some $m$ with $[m]=[m]_{\star}$.   

Fix such an $m$ and add all necessary partitions to $\Pi_{[m]}$ so that $\Pi$ is $[m]$-complete. The corresponding free abelian group contains $G$ as a subgroup. By Lemma~\ref{notcomp} there is a subcollection $\Pi^c$ of $\Pi$ such that $\Pi^c_{[m]}$ is a compatible collection and the corresponding group generated is the same, i.e. it still contains $G$ as a subgroup.  After repeating this for each equivalence class with $[m]=[m]_{\star}$  we may assume that $\Pi$ is a compatible collection.  

Now choose the $\varphi(\WP_i,m_i)$ so that $\Pi$ has the smallest possible number of non-principal partitions for a such a collection.  By Corollary~\ref{equiv} we may assume $\Pi_{[m]}=\Pi_m$ for each principal nonabelian equivalence class  $[m]$ and a choice of representative $m$. Let $\WQ\in \Pi$ be a non-principal partition, based at a vertex $u$ which has maximal link among the non-principal bases.   Since $u$ is non-principal,  $u<_\circ m$ for some $m\in L$. Let $\Pi_{[m]}=\{\WP_1,\dots,\WP_\ell\}$ ($\ell\geq 0$) and choose sides  $P_i$  of $\WP_i$ so that the $\oring P_i$ are nested. By Lemma~\ref{position*}, there is a side $Q$ of $\WQ$ such that $Q\subset\oring P_i\cap\oring P_{i-1}^*$ for some $i\leq \ell+1$. Maximise $Q$ with respect to inclusion over all non-principal partitions in $\Pi_{[u]}$ with sides in $\oring P_i\cap\oring P_{i-1}^*$.   By Proposition~\ref{PQ*} if a partition $\mathcal{R}$ based at $s\not\sim m$ is incompatible with $P_{i-1}\cup Q$, then $\WR$ has a side $R\subset\oring P_i\cap\oring P_{i-1}^*$ with $R\supset Q$ and $u<_\circ s$.   Maximality of $u$ tells us that $\mathcal{R}$ must in fact be a principal partition, so  by replacing $m$ with $s$ and repeating the above arguments we will reach a point where no such incompatible $\mathcal{R}$ exists.  At this point we claim that $P_{i-1}\cup Q=P_i$:  if  $P_{i-1}\cup Q$ was not  in $\Pi$  we could replace $\WQ$ with the partition determined by $P_i\cup Q$ to arrive at a collection of the same size $k$ but one fewer non-principal partition. 

Since $\varphi(\WP_i,m)$ commutes with $\varphi(\WQ,n)$,   $P_i=P_{i-1}\cup Q$ must contain both $u$ and $u^{-1}$, i.e. $u^{-1}\in P_{i-1}$. This implies that $\WP_{i-1}$ splits $u$, contradicting the commutativity conditions of Theorem~\ref{commute}.
 \end{proof}

\begin{cor} Let $G$ be an abelian subgroup of $\UG$ of rank $M(L),$ freely generated by $\{\varphi(\WP_i,m_i)\}$ with $m_i$ principal.  Suppose $m_i$ is not maximal for some $i$, say $m_i<_\star w$.  Then $\varphi(\WP,w)\in G$, where $\WP$ is the partition obtained from $\WP_i$ by exchanging $m_i$ for $w$, as defined in Remark~\ref{starless}.
\end{cor}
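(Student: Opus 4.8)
The plan is to check that $\varphi(\WP,w)$ commutes with every generator of $G$ and then to locate it exactly inside $G$ by a rank count plus a short computation. Write $\Pi=\{\WP_1,\dots,\WP_k\}$ with $k=M(L)$, and let $\WP=(\WP_i)_{m_i\to w}$ be the exchange of $m_i$ for $w$ described in Remark~\ref{starless}. Since $m_i<_\star w$ we have $st(m_i)\subseteq st(w)$, so $m_i\in st(w)$, and as $m_i\neq w$ this means $m_i\in lk(w)$; in particular $[m_i,w]=1$.

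First I would show $\varphi(\WP,w)$ commutes with each $\varphi(\WP_j,m_j)$. For $j=i$ this is Theorem~\ref{commute}, since $[m_i,w]=1$. For $j\neq i$, the hypothesis that $G$ is abelian says $\varphi(\WP_j,m_j)$ commutes with $\varphi(\WP_i,m_i)$, and statement~$(3)$ of Lemma~\ref{abeliancompatible} --- valid for the exchange $\WP$ by Remark~\ref{starless} --- then gives that $\varphi(\WP_j,m_j)$ commutes with $\varphi(\WP,w)$. Hence $G':=\langle G,\varphi(\WP,w)\rangle$ is a finitely generated abelian group generated by outer $\Gamma$-Whitehead automorphisms; being free abelian it has rank at most $M(L)$ by Theorem~\ref{largestrank}, and as $G\subseteq G'$ has rank $M(L)$ the quotient $G'/G$ is finite cyclic, generated by the image of $\varphi(\WP,w)$. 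So $\varphi(\WP,w)^{N}\in G$ for some $N\geq 1$.

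The remaining point --- which I expect to be the main obstacle --- is to improve $\varphi(\WP,w)^{N}\in G$ to $\varphi(\WP,w)\in G$, i.e.\ to show $N=1$. By Corollary~\ref{conjugation} every element of $G$ acts on each generator of $\AG$ by multiplication/conjugation by a word in the multipliers $m_1,\dots,m_k$; on the other hand $\varphi(\WP,w)^{N}$ multiplies a suitable vertex of $\oring P_i$ (one moved by $\WP$ and split by no $\WP_j$, which exists because $\WP$ is thick and $m_i$ is principal, cf.\ Lemma~\ref{splits}) by a nontrivial power of $w$; abelianizing forces $w=m_{j_0}$ for some $j_0$ and $N\mid a_j$ in any expression $\varphi(\WP,w)^{N}=\prod_j\varphi(\WP_j,m_j)^{a_j}$, after which $\varphi(\WP,w)\cdot\prod_j\varphi(\WP_j,m_j)^{-a_j/N}$ has trivial $N$-th power in the torsion-free group $G'$ and is therefore trivial. (Alternatively, once $w=m_{j_0}$, the $w$-based partitions of $\Pi$ together with $\WP$ are all based at $w$ with pairwise-commuting outer automorphisms, and Proposition~\ref{dependent} applies once one knows --- again from maximality of the rank --- that the $w$-based sub-collection of $\Pi$ is a maximal compatible collection based at $w$.) One should also record, as in Remark~\ref{starless}, that these hypotheses force $\WP$ to be a genuine \iGW-partition based at $w$, so that $\varphi(\WP,w)$ is defined.
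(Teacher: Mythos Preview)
Your core strategy is the paper's: use Remark~\ref{starless}(3) to show that $\varphi(\WP,w)$ commutes with every generator $\varphi(\WP_j,m_j)$, enlarge $G$ to an abelian group $G'$ containing $\varphi(\WP,w)$, and invoke Theorem~\ref{largestrank} to force $\mathrm{rank}(G')\le M(L)$. The paper packages this via the $[w]$-completion $\Pi'$ of $\Pi$ (so Remark~\ref{starless} immediately places $\WP$ in $\Pi'$), lets $G'$ be the group generated by the corresponding Whitehead automorphisms, and then simply writes ``$\mathrm{rank}(G)=\mathrm{rank}(G')$ so $G=G'$''; it does not spell out the passage from equal rank to equality of the groups.

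You notice that equal rank a priori gives only $[G':G]<\infty$ and try to close this, but your fix has gaps of its own. To apply Theorem~\ref{largestrank} to $G'$ you need $G'$ to be \emph{free} abelian, which you assert without proof; the same torsion-freeness is invoked again at the end of your argument. The abelianisation step does show that $w$ must occur among the multipliers $m_j$, but the claim ``$N\mid a_j$'' for \emph{every} $j$ does not follow from examining a single vertex (different $\WP_j$ move different vertices), so the element $\varphi(\WP,w)\cdot\prod_j\varphi(\WP_j,m_j)^{-a_j/N}$ is not even defined. Your alternative via Proposition~\ref{dependent} would require $\Pi_w$ to already be a \emph{maximal} compatible collection based at $w$, which is not given. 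So your outline coincides with the paper's proof and your instinct about the delicate step is reasonable, but the bridging argument you sketch is not yet complete; the paper simply takes that step for granted.
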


\begin{proof} Let $\Pi=\{\WP_i\}$,  and let $\Pi^\prime$ be the $[w]$-completion of $\Pi$. Remark~\ref{starless} shows  that  $$\WP=\WP_i\setminus(\{m_i\}\cup lk(m_i)^\pm)\cup  \{w\}$$   is contained in $\Pi'$.   If $G'$ is the corresponding free abelian subgroup then $G\leq G'$. Theorem~\ref{largestrank} tells us that $rank(G)=rank(G')$ so $G=G'$, thus $\varphi(\WP,w)\in G$.
\end{proof}

\section{Virtual cohomological dimension}\label{VCD}

By Theorem~\ref{dimension} we know $dim(K_\Gamma)=M(V)$ is an upper bound on the \vcd\  of $\UG$ and $M(L)$ is a lower bound  by Theorem~\ref{ML subgroup}.  In this section we give conditions under which $M(V)=M(L)$.

\begin{lem}
\label{far apart}
If   non-equivalent vertices $u,v\in V$ have $d_\Gamma(u,v)\neq 2$ (where $d_\Gamma$ is the length of a shortest path in $\Gamma$), then any partition based at $u$ is compatible with any partition based at $v$. In particular,
\[M(u,v)=M(u)+M(v).\]
\end{lem}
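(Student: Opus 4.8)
The plan is to reduce the displayed formula to a single combinatorial claim: \emph{any \GW-partition $\WP$ based at $u$ is compatible with any \GW-partition $\WQ$ based at $v$.} The equality $M(u,v)=M(u)+M(v)$ then drops out by a counting argument, given at the end. To prove the claim I split on $d_\Gamma(u,v)$. If $d_\Gamma(u,v)=1$ then $u$ and $v$ commute, and a commuting pair of vertices is equivalent if and only if $st(u)=st(v)$ (since all elements of an equivalence class that commute have $[\cdot]_\circ$ a singleton); as $u\not\sim v$ we get $st(u)\neq st(v)$, so clause $(2)$ of the definition of compatibility holds.

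Now assume $d_\Gamma(u,v)\geq 3$; since $\Gamma$ is connected this (together with $d_\Gamma(u,v)=1$) exhausts the remaining cases, and here I must verify clause $(1)$, i.e.\ produce a side of $\WP$ disjoint from a side of $\WQ$. From $d_\Gamma(u,v)\geq 3$ we get $v\notin st(u)$, $u\notin st(v)$, and $lk(u)\cap lk(v)=\emptyset$ (a common neighbour would give a path of length $2$). The crucial observation is that $st(v)$ is a connected subgraph of $\Gamma$ disjoint from $lk(u)$, so it lies in a single component $C$ of $\Gamma-lk(u)$; since $\Gamma$ is connected, $st(v)$ has at least two vertices, so $C^{\pm}$ is a single element of $\bI(u)$. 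Hence $\WP$ does not split $v$, and the side $P_{0}$ of $\WP$ not containing $v$ satisfies $P_{0}\cap C^{\pm}=\emptyset$, in particular $P_{0}\cap st(v)^{\pm}=\emptyset$ (and of course $P_{0}\cap lk(u)^{\pm}=\emptyset$). Symmetrically, with $D$ the component of $u$ in $\Gamma-lk(v)$, the side $Q_{0}$ of $\WQ$ not containing $u$ satisfies $Q_{0}\cap st(u)^{\pm}=\emptyset$ and $Q_{0}\cap D^{\pm}=\emptyset$.

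I then claim $P_{0}\cap Q_{0}=\emptyset$, which finishes this case. If not, pick $z\in P_{0}\cap Q_{0}$ with underlying vertex $\hat z$; then $\hat z\notin st(u)\cup st(v)$. Let $A$ be the component of $\hat z$ in $\Gamma-lk(u)$. Since $C^{\pm}$ is an element of $\bI(u)$ lying in the side of $\WP$ containing $v$, while $z$ lies in the other side $P_{0}$, we have $z\notin C^{\pm}$, so $A\neq C$; as $lk(v)\subseteq st(v)\subseteq C$, this forces $A\cap lk(v)=\emptyset$, so the connected set $A$ is contained in the component $B$ of $\hat z$ in $\Gamma-lk(v)$. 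The symmetric argument, using that $D^{\pm}$ is a single element of $\bI(v)$, gives $B\neq D$ and $B\subseteq A$, whence $A=B$. But then every edge of $\Gamma$ with one endpoint in $A$ and the other outside $A$ would have that other endpoint in $lk(u)\cap lk(v)=\emptyset$; so no such edge exists, and $A$ is a connected component of $\Gamma$ not containing $u$, contradicting connectedness of $\Gamma$.

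Finally, for $M(u,v)=M(u)+M(v)$: no \GW-partition is based at both $u$ and $v$, since the ``link part'' of such a partition would be simultaneously $lk(u)^{\pm}$ and $lk(v)^{\pm}$, forcing $lk(u)=lk(v)$, which is impossible here. Hence any compatible collection $\Pi$ based at $\{u,v\}$ splits as a disjoint union $\Pi_u\sqcup\Pi_v$ of its parts based at $u$ and at $v$, with $|\Pi_u|\leq M(u)$ and $|\Pi_v|\leq M(v)$, giving $M(u,v)\leq M(u)+M(v)$; conversely, taking maximal compatible collections $\Pi_u$, $\Pi_v$ of sizes $M(u)$, $M(v)$ based at $u$, $v$ respectively, the union $\Pi_u\sqcup\Pi_v$ is compatible by the claim together with internal compatibility, so $M(u,v)\geq M(u)+M(v)$. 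The main obstacle is the component-chasing in the case $d_\Gamma(u,v)\geq 3$: one must identify exactly the right candidate disjoint pair (the ``small'' sides $P_{0},Q_{0}$, away from $v$ and $u$) and then exclude a shared component of $\Gamma-lk(u)$ and $\Gamma-lk(v)$, the last step genuinely using connectedness of $\Gamma$ --- the statement fails, for instance, for $\Gamma$ a disjoint union of three edges.
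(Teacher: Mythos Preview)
Your proof is correct. The case $d_\Gamma(u,v)=1$ is handled exactly as in the paper. For $d_\Gamma(u,v)\geq 3$ you and the paper pick the \emph{same} candidate disjoint sides (the side $P_0$ of $\WP$ missing $v$ and the side $Q_0$ of $\WQ$ missing $u$), but establish $P_0\cap Q_0=\emptyset$ by different means. The paper proves a single structural containment: if $C_v\in\bI(u)$ is the inseparable set containing $v$ and $C_u\in\bI(v)$ the one containing $u$, then $C_v$ contains every element of $\bI(v)$ except $C_u$, and symmetrically. From this, $P_0$ (a union of $\bI(u)$-pieces avoiding $C_v$) sits inside $C_u$, while $Q_0$ is a union of $\bI(v)$-pieces avoiding the single piece $C_u$, so $P_0\cap Q_0=\emptyset$ is immediate. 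Your route instead assumes a common element $z$ and shows its component in $\Gamma-lk(u)$ coincides with its component in $\Gamma-lk(v)$, forcing a proper connected component of $\Gamma$.

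What each approach buys: the paper's containment $C_v\supseteq\bigcup(\bI(v)\setminus\{C_u\})$ is a clean reusable statement and makes the disjointness a one-liner. Your argument is more elementary and self-contained, and it makes explicit exactly where connectedness of $\Gamma$ enters; indeed your counterexample (three disjoint edges) shows the lemma as stated is genuinely false without that hypothesis, a point the paper's proof glosses over. You also spell out the easy counting argument for $M(u,v)=M(u)+M(v)$, including the observation that no \GW-partition can be based at both $u$ and $v$, which the paper leaves to the reader.
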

\begin{proof} If $d_\Gamma(u,v)=1$ then $u$ and $v$ commute.  Since we are assuming $u\not\sim v$, the partitions are compatible.  If $d_\Gamma(u,v)\geq 3,$  let $C_v$ denote the element of $\bI(u)$ containing $v$ (and $v^{-1}$) and $C_u$ the element of $\bI(v)$ containing $u$.  Then $C_v$ contains all elements of $\bI(v)$ other than  $C_u$ and $C_u$ contains all elements of $\bI(u)$ other than $C_v$.   This implies that any  thick partition of $\bI(v)$ separating $v$ from $v^{-1}$ is compatible with any  thick  partition of $\bI(u)$ separating $u$ from $u^{-1}$.  
\end{proof}

\begin{thm}\label{condition}
Let $\Gamma$ be a graph  and $L\subseteq V$ its set of principal vertices. Suppose that  every $u\in V\setminus L$  satisfies
\begin{equation}
\hbox{All principal maximal $m$ with $m\geq_\circ u$ are in the same component of $\Gamma-lk(u)$}.
\end{equation}
Then $M(V)=M(L)$. 
\end{thm}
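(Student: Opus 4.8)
The plan is to show that under hypothesis (2), any maximal compatible collection $\Pi$ of \iGW-partitions can be modified, without changing its size, into a collection of principal partitions; since a maximal compatible collection of principal partitions has size $M(L)$ (and $M(L)\le M(V)$ always), this forces $M(V)=M(L)$. So the whole argument reduces to an "upgrading" procedure that replaces a non-principal partition in $\Pi$ by a principal one compatible with everything else in $\Pi$, strictly decreasing the number of non-principal partitions.

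First I would set up the same induction as in the proof of Theorem~\ref{largestrank}: choose $\Pi$ maximal compatible with the \emph{smallest} number of non-principal partitions, use Corollary~\ref{equiv} to arrange $\Pi_{[m]}=\Pi_m$ for each nonabelian principal class, and pick a non-principal partition $\WQ\in\Pi$ based at a vertex $u$ whose link is maximal among all non-principal bases. Since $u\notin L$ there is a principal $m$ with $u<_\circ m$, and I may take $m$ to be \emph{maximal} (replacing $m$ by anything $\star$-above it keeps $u\leq_\circ m$). Now apply Lemma~\ref{position*}: writing the nested chain $\emptyset=\oring P_0\subset\oring P_1\subseteq\cdots\subseteq\oring P_\ell\subset\oring P_{\ell+1}=V^\pm\setminus st(m)^\pm$ coming from $\Pi_{[m]_\star}$ (Lemma~\ref{comp}), some side $Q$ of $\WQ$ lies in $\oring P_i\cap\oring P_{i-1}^*$ for some $i\le\ell+1$. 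Maximize $Q$ over all non-principal partitions in $\Pi_{[u]}$ with a side in that quadrant, and let $\WP$ be the \iGW-partition determined by $P=P_{i-1}\cup Q$, which is based at $m$ and hence principal.

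The heart of the argument is then Proposition~\ref{PQ*}: if $\WR\in\Pi\setminus\Pi_{[m]_\star}$ fails to be compatible with $\WP$, then $\WR$ has a side $R\subseteq\oring P_i\cap\oring P_{i-1}^*$ containing $Q$ with base $s>_\circ u$. Maximality of $u$ among non-principal bases forces $s$ to be principal; and here is where hypothesis (2) enters --- I need to rule out such an incompatible principal $\WR$ entirely. Since $s>_\circ u$ we have $s\geq_\circ u$, and picking a maximal principal vertex $\star$-above $s$ we get a maximal principal vertex $m'\geq_\circ u$; $R$ being contained in $\oring P_i\cap\oring P_{i-1}^*$ puts $m'$ in a bounded component of $\Gamma-lk(u)$, while the original $m$ is a maximal principal vertex $\geq_\circ u$ in a possibly different component --- but (2) says \emph{all} such vertices lie in one component of $\Gamma-lk(u)$, forcing $R$ and $P$ to be on comparable sides and contradicting incompatibility. (The precise bookkeeping here is: hypothesis (2) means $\WQ$ cannot separate two maximal principal over-vertices of $u$, which is exactly what an incompatible $\WR$ would require.) With no incompatible $\WR$ remaining, $\WP$ is compatible with all of $\Pi\setminus\{\WQ\}$, so $\Pi\setminus\{\WQ\}\cup\{\WP\}$ is compatible of the same size with one fewer non-principal partition --- unless $\WP$ was already in $\Pi$. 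If $\WP=\WP_i\in\Pi$ then $P_i=P_{i-1}\cup Q$ contains both $u$ and $u^{-1}$ (since $\varphi(\WP_i,m)$ commutes with $\varphi(\WQ,n)$, or directly since $\WQ$ doesn't split $u$ but $\oring P_i$ straddles it), hence $u^{-1}\in P_{i-1}$, so $\WP_{i-1}$ splits $u$; but $\WP_{i-1}$ is principal and by Lemma~\ref{splits} a principal base cannot be split, a contradiction.

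The main obstacle I expect is the geometric step involving hypothesis (2): translating "$\WR$ has a side $R\supseteq Q$ based at a principal $s$ with $s>_\circ u$, contained in the quadrant $\oring P_i\cap\oring P_{i-1}^*$" into "two distinct maximal principal vertices $\geq_\circ u$ lie in different components of $\Gamma-lk(u)$", and checking that this is genuinely the negation of (2). One must be careful that the partition $\WR$, being based at $s$ with $u<_\circ s$ and having a side in the relevant quadrant, really does separate the maximal principal vertex over $s$ from the maximal principal vertex over $m$ across $lk(u)$ --- this uses that $\WQ$ and the nested $\WP_j$'s all live "above" $lk(m)\supseteq lk(u)$ in the appropriate sense. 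Once that combinatorial-geometric dictionary is pinned down, the rest is the same descent already carried out in Theorem~\ref{largestrank}.
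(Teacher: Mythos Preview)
Your overall strategy---upgrade non-principal partitions to principal ones via Proposition~\ref{PQ*}---is the right one, and matches the paper. But the place where you invoke hypothesis~(2) does not work as written, and the paper's proof is organized differently precisely to get around this.

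Here is the gap. After forming $\WP$ from $P=P_{i-1}\cup Q$, Proposition~\ref{PQ*} tells you that an incompatible $\WR\in\Pi\setminus\Pi_{[m]_\star}$ has a side $R$ with $Q\subseteq R\subseteq\oring P_i\cap\oring P_{i-1}^*$ and base $s>_\circ u$, and your maximality choice for $u$ makes $s$ principal. You then want hypothesis~(2) to force a contradiction by saying $\WQ$ separates $m$ from (a maximal principal vertex over) $s$. But nothing you have arranged forces $s$ to lie in $Q$: since $R$ may be strictly larger than $Q$, the base $s\in R$ can perfectly well sit in $Q^*$, the \emph{same} side of $\WQ$ as $m$ (recall $m\in P_{i-1}$, hence $m\in Q^*$). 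In that case $m$ and $s$ lie in the same component of $\Gamma\setminus lk(u)$ and hypothesis~(2) yields nothing. Your parenthetical ``an incompatible $\WR$ would require $\WQ$ to separate two maximal principal over-vertices'' is exactly the unjustified step.

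The paper fixes this with a two-stage argument. First it chooses $m$ not arbitrarily but so that $|P_{i(m)}|$ is \emph{minimal} over all principal $m\geq_\circ u$; this alone (no use of~(2)) rules out any $\WR$ incompatible with $P_{i-1}\cup Q$, because such an $\WR$ would put $Q$ inside a strictly smaller $R_j\subsetneq P_i$ in the nest for $s$, violating minimality. Only then does the paper pass to $\WP'$ determined by $P_{i-1}\cup M$ with $M\subsetneq Q$ maximal. Now any $\WR'\neq\WQ$ incompatible with $\WP'$ is still compatible with $\WP$ (from stage one), which forces $R'\subseteq Q$; hence $s'\in Q$ while $m\in Q^*$, and \emph{now} hypothesis~(2) genuinely contradicts their lying in different components of $\Gamma\setminus lk(u)$. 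So~(2) is used at the second stage, not the first.

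A smaller issue: your fallback ``if $\WP\in\Pi$ already then $\WP_{i-1}$ splits $u$, contradicting Lemma~\ref{splits}/commutativity'' is imported from Theorem~\ref{largestrank}, where the $\varphi$'s are assumed to commute. Here you only have a compatible collection, so Theorem~\ref{commute} is unavailable; and Lemma~\ref{splits} protects the \emph{principal base} of a partition from being split, not a non-principal $u$---it only yields $u<_\circ m$, which you already know. The paper avoids this issue entirely by replacing $\WQ$ with $\WP'$ (which cannot already be in $\Pi$, since it would sit strictly between $P_{i-1}$ and $P_i$ in the nest).
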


 \begin{proof} Let $\Pi$ be a maximal pairwise-compatible collection of \GW-partitions with $M(V)$ elements.  We will produce a new collection of the same size in which all partitions are principal. By Corollary~\ref{equiv} we may assume $\Pi_{[m]}=\Pi_m$ for each principal nonabelian equivalence class and a choice of representative $m$.

Let $u$ be maximal among all  $u\in V\setminus L$ with $\Pi_u\neq\emptyset$.   By Lemma~\ref{comp}, for any principal $m\geq_\circ u$  we have a nest $$\oring P_0=\emptyset\subset\oring P_1\subseteq \ldots\subseteq\oring P_k\subset \emptyset^*=\oring P_{k+1}, $$
where $\WP_1,\ldots,\WP_k  \, (k\geq 0)$ are the elements of $\Pi_{[m]}$ and $P_i$ is a side of $\WP_i$. By Lemma~\ref{position*}, each $\WQ\in \Pi_{[u]}$ has a side $Q^\times \subset\oring P_{i}\cap\oring P_{i-1}^*$ for some $i$.  For each $m$, let $i(m)$ be the smallest index such that $\oring P_{i(m)}$ contains one of these $Q^\times$.  Choose $m$ such that $|P_{i(m)}|$ is minimal.  Among the  $ \WQ\in \Pi_u$ with $Q^\times\subset\oring P_{i(m)}$ choose one with $Q=Q^\times$ maximal.  

The union $P_{i-1}\cup Q$ determines a \GW-partition  $\WP$  based at $m$. If  there is some $\WR\in\Pi$   not compatible with $\WP$ then   by Proposition~\ref{PQ*} $\WR$ has a side $R\subset\oring P_i$ containing $Q$  and disjoint from $\oring P_{i-1}$, and  $\WR$ is based at some $s>_\circ u$.  By our choice of $u$ this implies that $s$ is principal,  so either $Q$ or $Q^*$ is somewhere in the nest 
$\emptyset\subset\oring R_1\subset \ldots\subset\oring R_\ell\subset \emptyset^*$ associated with $s$.   Since  $s>_\circ u,$ $Q$ does not contain $s$ (if it did, it would split $s$ and we would have $s\leq_\circ u$). Therefore $Q$ is in the nest.  Since $R=R_j$ for some $j$, we have $Q\subset R_j \subsetneq P_i$, contradicting minimality of $|P_i|$.

Now take a proper subset $M\subsetneq Q$ in $\Pi_{u}^\pm$ of maximal size.  If there is no such $M$, take $M=\{u\}$. Proposition~\ref{PQ*} applied to $\Pi\setminus \WQ$ shows that if $\WR$ is not compatible with the \GW-partition $\WP^{\prime}$ determined by $P_{i-1}\cup M$ then either $\WR=\WQ$ or $\WR$ has a side $R\subset\oring P_i$ containing $M$ and disjoint from $\oring P_{i-1}$, and $\WR$ is based at some $s>_\circ u$.  By our choice of $u$, $s$ is principal.  But $s$ and $m$ are on different sides of $\WQ$, contradicting our hypothesis that all principal $v>_\circ u$ are in the same component of $\Gamma-lk(u)$.  
 We can now replace $\WQ$ by $\WP^{\prime}$ to get a new collection of the same size, with one fewer non-principal partition. Continuing, we can replace all non-principal partitions by principal partitions, showing $M(V)=M(L)$.   
 \end{proof}

The following is a special case of Theorem~\ref{condition} which is often very easy to check.  
\begin{cor} If every  non-principal equivalence class of vertices in $\Gamma$ is $<_\circ$ at most one  principal equivalence class, then $M(V)=M(L)$.  
\end{cor}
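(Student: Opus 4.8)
The plan is to verify that the hypothesis of this corollary is a special case of condition $(1)$ in Theorem~\ref{condition}, so that the conclusion $M(V)=M(L)$ follows immediately. So suppose every non-principal equivalence class of vertices is $<_\circ$ at most one principal equivalence class. Fix any $u\in V\setminus L$; I must check that all principal maximal vertices $m$ with $m\geq_\circ u$ lie in the same component of $\Gamma-lk(u)$.

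First I would observe that since $u$ is non-principal, the set of principal $m$ with $u<_\circ m$ (note that $m\geq_\circ u$ together with $m$ principal forces $u<_\circ m$, as $u\sim_\circ m$ would make $u$ principal too) is nonempty, and by hypothesis all such $m$ lie in a single principal equivalence class $[m]$. In particular any two principal maximal vertices $m, m'$ with $m,m'\geq_\circ u$ satisfy $m\sim m'$, hence $lk(m)=lk(m')$ (they have the same link since $u<_\circ m$ rules out $st(m)\subseteq st(m')$ being the source of equivalence — or more simply, $m\sim_\circ m'$ because $u\sim_\circ$ would be abelian; in any case $lk(m)=lk(m')$). So it suffices to show that a \emph{single} equivalence class of vertices cannot be split by $\Gamma-lk(u)$, i.e. that all of $[m]^{\pm}$ lies in one component of $\Gamma-lk(u)$.

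The key step is then: if $m\sim m'$ and $m\geq_\circ u$, then $m$ and $m'$ are in the same component of $\Gamma - lk(u)$. This is essentially the content already used in Lemma~\ref{splits}: since $u<_\circ m$ we have $lk(u)\subsetneq lk(m)$, so pick any $w\in lk(m)\setminus lk(u)$; then $w$ is adjacent to every vertex of $[m]$ (as all elements of $[m]$ share the link $lk(m)\ni$ — more precisely each $x\in[m]$ with $x\ne w$ has $w\in lk(x)$ since $lk(x)=lk(m)$, and if $w\in[m]$ one argues directly) but $w\notin lk(u)$, so $w$ survives in $\Gamma-lk(u)$ and connects all of $[m]$. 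Hence the whole class, and therefore the set of principal maximal vertices above $u$, lies in one component of $\Gamma-lk(u)$, establishing condition $(1)$.

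The only mild subtlety — and the one place I would be careful — is the case where $w\in[m]$ itself, or where one needs the vertices of $[m]$ to actually be pairwise adjacent or connected through a common neighbor; here I would invoke that $[m]$ is a single equivalence class (hence all its members have the same link $lk(m)$, by the discussion of $\sim_\star$ versus $\sim_\circ$ preceding Lemma~\ref{abprin}, noting the class is non-abelian or a singleton as forced by $u<_\circ m$) so any fixed $w\in lk(m)\setminus lk(u)$ is a common neighbor of all members of $[m]$ distinct from $w$, and lies in $\Gamma-lk(u)$. This gives connectedness and completes the reduction to Theorem~\ref{condition}. I expect no serious obstacle: the corollary is genuinely a packaging of the theorem with a cleaner hypothesis, and the main work is simply unwinding the definitions of $<_\circ$, equivalence classes, and ``component of $\Gamma-lk(u)$''.
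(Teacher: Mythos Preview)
Your approach is exactly the one the paper intends: the corollary is stated immediately after Theorem~\ref{condition} as ``a special case \ldots\ which is often very easy to check,'' with no further proof given. Reducing to condition~(1) of that theorem by showing that a single equivalence class above $u$ cannot be split by $\Gamma-lk(u)$ is the right idea, and your use of a vertex $w\in lk(m)\setminus lk(u)$ as a common neighbor is precisely the mechanism behind Lemma~\ref{splits}.

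One small correction: your claim that $u<_\circ m$ forces $[m]$ to be non-abelian or a singleton is not justified and is in fact false in general. Fortunately this does not damage the argument. If $[m]$ is abelian with $m'\sim_\star m$, then $w\in lk(m)\subset st(m)=st(m')$, so $w\in lk(m')$ whenever $w\neq m'$; thus $w$ is still a common neighbor of every element of $[m]$ other than $w$ itself, which is all you need. You should also note (it is implicit but not stated) that any $m'\in[m]$ with $m'\geq_\circ u$ automatically lies in $\Gamma-lk(u)$: if $m'\in lk(u)\subseteq lk(m')$ we get a contradiction in the non-abelian case, and in the abelian case $m'\in lk(u)$ forces $lk(u)\not\subseteq lk(m')$, so such an $m'$ is not among the vertices to be connected. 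With these two clarifications the argument is complete.
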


\section{Examples}\label{sec:examples}

In this section we give a few examples illustrating  both the utility and the limits of Theorem~\ref{condition}.  

\begin{example}
Let $\Gamma$ be the graph with $n$ vertices and no edges, i.e. $A_\Gamma=\mathbb{F}_n$. Here there are no twists so $\UG=Out(\AG)$.  Since all vertices are maximal and equivalent,  Corollary \ref{equiv} implies $M(V)=M(m)$ for any choice of vertex $m$.   Since  $M(m)=2n-3$ (see Example~\ref{Mm}),  this gives (the correct) lower bound of $2n-3$ for the \vcd\ of $Out(F_n)$.  \end{example}

\bigskip

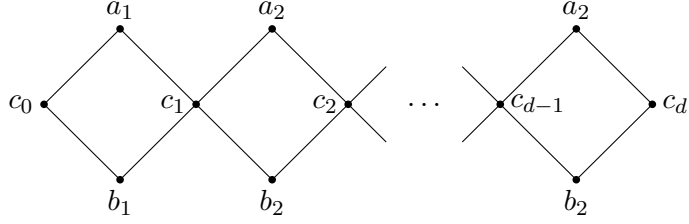
\begin{figure}\begin{center}
\begin{tikzpicture}[scale=1] 
\fill [black] (0,0) circle (.05); \node [left](c0) at (0,0) {$c_0$};
\fill [black] (1,1) circle (.05);\node [above] (a1) at (1,1) {$a_1$};
\fill [black] (1,-1) circle (.05);\node [below] (b1) at (1,-1) {$b_1$};
\fill [black] (2,0) circle (.05); \node [left](c1) at (2,0) {$c_1$};
\fill [black] (3,1) circle (.05);\node [above] (a1) at (3,1) {$a_2$};
\fill [black] (3,-1) circle (.05);\node [below] (b1) at (3,-1) {$b_2$};
\fill [black] (4,0) circle (.05); \node [left](c1) at (4,0) {$c_2$};

\fill [black] (6,0) circle (.05); \node [right](c1) at (6,0) {$c_{d-1}$};
\fill [black] (7,1) circle (.05);\node [above] (a1) at (7,1) {$a_2$};
\fill [black] (7,-1) circle (.05);\node [below] (b1) at (7,-1) {$b_2$};
\fill [black] (8,0) circle (.05); \node [right](c1) at (8,0) {$c_d$};

\node (dots) at (5,0) {$\ldots$};
\draw (0,0) to (1,1) to (2,0) to (3,1) to (4,0) to (3,-1) to (2,0) to (1,-1) to (0,0);
\draw (4.5,.5) to (4,0) to (4.5,-.5);
\draw (5.5,-.5) to (7,1) to (8,0) to (7,-1) to (5.5,.5);
\end{tikzpicture}
\caption{String of diamonds}\label{diamonds}
\end{center}
\end{figure}

\begin{example}
Let $\Gamma$ be a string of $d$ diamonds, as shown in Figure~\ref{diamonds}. 
 Again there are no twists, so $\UG=Out(\AG)$.  The only non-principal vertices are $c_0$ and $c_d$ and there are no \GW-partitions based at either of these, so $M(V)=M(L)$.   Let $\Pi$ be a collection of  size $M(V)$.   We have $[a_i]=\{a_i,b_i\}$ for each $1\leq i\leq d$ so by Corollary~\ref{equiv} we may assume $|\Pi_{\{a_i,b_i\}}|=|\Pi_{a_i}|$ for each $i$. We have $M(a_i)=3$ if $2\leq i\leq d-1$, $M(a_d)=M(a_1)=2$, $M(c_i)=1$ if $2\leq i\leq d-2$ and $M(c_1)=M(c_{d-1})=2$. Therefore,
\[M(V)\leq 3(d-2)+4+d-3+4=4d-1.\]
It is easy to find a collection of  \GW-subsets with $4d-1$ elements (one is given explicitly  in \cite{CSV}), so in fact $M(V)=4d-1$ and  $dim(K_\Gamma)=M(V) =$ \vcd$(Out(\AG))$.

\end{example}

\begin{example}\label{exampletwo}   
Let $\Gamma$ be the graph in Figure~\ref{fork}.   Since $\Gamma$ is a tree,  the \vcd\ of $Out(\AG)$ is equal to $e+2\ell-3=7+8-3=12$ \cite{BCV}.  The only twists are given by the leaf transvections.   These form a normal free abelian subgroup of rank 4 (the number of leaves), with quotient $\UG$, so it is natural to expect that the \vcd\ of  $\UG$ is $8$.

There are no \GW-partitions based at any of the $b_i$ since $\Gamma\setminus st(b_i)$ has only one component.  Any partition based at $v_1$ is compatible with any partition based at a different vertex by Lemma \ref{far apart}, since $a_i,v_0\in st(v_1)$ for each $i$. We have $M(v_1)=|\bI(v_1)|-3=5$.  Now consider partitions based at $a_1, a_2$ or $a_3$.    Choose any one such partition $\WP$, say based $a_1$.  Then for each $a_i$  there are at most two choices of partition compatible with $\WP$ since the side of $\WP$ not containing $a_i$ must be disjoint from the side of $\WQ$ not containing $a_1$.  Say a choice $\WQ$ is based at $m$, then by repeating this argument on disjoint sides there is at most one choice of partition compatible with both $\WP$ and $\WQ$, so $M(a_1,a_2,a_3)=3$ and the largest possible number of \GW-partitions based at principal vertices is $5+3=8$.  Since $M(v_0)=2$, we have $M(V)\leq 10$. In fact equality holds since the following  list of five \GW-subsets determines a compatible collection of distinct \GW-partitions based in $\{a_1,a_2,a_3,v_0\}^{\pm}$:
\[\{a_1,v_0\},\{v_0,a_1,a_1^{-1},b_1,b_1^{-1}\},\{a_2,v_0,a_1,a_1^{-1},b_1,b_1^{-1}\},\{v_0^{-1},a_3,a_3^{-1},b_3,b_3^{-1}\},\{a_3^{-1},v_0\}.\]
 
Thus $M(L)=8\leq \hbox{\vcd}(\UG) \leq dim K_\Gamma=10$. 
    \end{example}
    
    \begin{figure}
\begin{center}
\begin{tikzpicture}[scale=1] 
\fill [black] (0,0) circle (.05); \node [above](c0) at (0,0) {$v_0$};
\fill [black] (1,0) circle (.05);\node [above] (a1) at (1,0) {$v_1$};
\fill [black] (2,1) circle (.05);\node [above] (b1) at (2,1) {$a_1$};
\fill [black] (3,1) circle (.05); \node [above](c1) at (3,1) {$b_1$};
\fill [black] (2,0) circle (.05);\node [above] (a1) at (2,0) {$a_2$};
\fill [black] (3,0) circle (.05);\node [above] (b1) at (3,0) {$b_2$};
\fill [black] (2,-1) circle (.05); \node [above](c1) at (2,-1) {$a_3$};
\fill [black] (3,-1) circle (.05); \node [above](c1) at (3,-1)  {$b_3$};
 \draw (0,0) to (1,0) to (2,0) to (3,0) ;
  \draw   (1,0) to (2,1) to (3,1) ;
    \draw   (1,0) to (2,-1) to (3,-1) ;
\end{tikzpicture}
\caption{$\Gamma$ is a tree with $M(V)=10$, but there is no free abelian subgroup of rank 10 generated by compatible collections of Whitehead automorphisms.}\label{fork}
\end{center}
\end{figure}
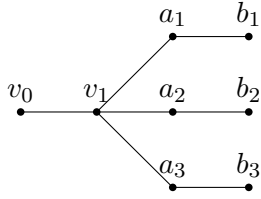
    
        \begin{example}\label{simpler}
    A similar but slightly simpler example is when $\Gamma$ is the tree in Figure~\ref{simpletree}. A quick check yields $M(V)=M(v)+M(u,a_1,a_2)$ and $M(v)=3$. Furthermore, arguing in the same fashion tells us that $M(u,a_1,a_2)\leq 3$, with a possible $\Pi_{\{u,a_1,a_2\}}$ being the \GW-partitions determined by:
    \[\{a_1,u\},\{u,a_1,a_1^{-1},b_1,b_1^{-1}\},\{a_2^{-1},u^{-1}\}.\]
    Thus, $M(V)=6$ so $\dim(K_\Gamma)=6$ but we only find a subgroup $\mathbb{Z}^5\leq \UG$. In the following section it is shown that this particular $\Gamma$ has $VCD(\UG)=5$.

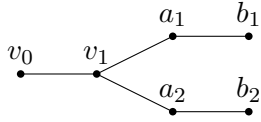
\begin{figure}
\begin{center}
\begin{tikzpicture}[scale=1] 
\fill [black] (0,0) circle (.05); \node [above](c0) at (0,0) {$v_0$};
\fill [black] (1,0) circle (.05);\node [above] (a1) at (1,0) {$v_1$};
\fill [black] (2,.5) circle (.05);\node [above] (b1) at (2,.5) {$a_1$};
\fill [black] (3,.5) circle (.05); \node [above](c1) at (3,.5) {$b_1$};
\fill [black] (2,-.5) circle (.05);\node [above] (a1) at (2,-.5) {$a_2$};
\fill [black] (3,-.5) circle (.05);\node [above] (b1) at (3,-.5) {$b_2$};
 \draw (0,0) to (1,0) to (2,.5) to (3,.5) ;
  \draw   (1,0) to (2,-.5) to (3,-.5) ;
\end{tikzpicture}
\caption{For this tree $M(L)=5$ but dim($K_\Gamma)=6$.}\label{simpletree}
\end{center}
\end{figure}

    \end{example}

\section{Reducing the dimension of $K_\Gamma$}\label{reduction}

In this section we show that, in some cases with $M(V)> M(L)$, we can find an invariant contractible subcomplex of $K_\Gamma$ of smaller dimension. We use the weak notion of compatibility throughout since that is what is actually used in \cite{CSV} to define and prove contractibility of $K_\Gamma$.  

\begin{definition}\label{barbed}
A graph $\Gamma$ is {\em barbed} if  for all non-principal vertices $u$, $d_\Gamma(u,v)=2$ implies $u<_\circ v$.
\end{definition}

\begin{lem} If $\Gamma$ is barbed then every non-principal equivalence class is minimal and has only one element. Furthermore any \iGW-partition based at a non-principal element splits {\em only} that element. 
\end{lem}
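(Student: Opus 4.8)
### Proof Plan

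The plan is to unpack the definition of \emph{barbed} and chase through the consequences for a non-principal vertex $u$. First I would recall that $u$ being non-principal means there exists some $w$ with $u<_\circ w$, i.e. $lk(u)\subsetneq lk(w)$. To show $[u]$ is minimal, suppose for contradiction there is $x$ with $x<_\circ u$, so $lk(x)\subsetneq lk(u)$. Then $x$ and $u$ are distinct, non-adjacent (since $x\notin lk(x)\subseteq lk(u)$ would force... actually one must check $x\notin lk(u)$: if $x\in lk(u)$ then $u\in lk(x)\subseteq lk(u)$, impossible), and they have a common neighbor because $lk(x)\subseteq lk(u)$ and $lk(x)$ is nonempty unless $x$ is isolated — one handles the isolated/empty-link case separately, noting an isolated vertex has $lk = \emptyset \subseteq lk(w)$ for all $w$, which makes it principal only if no vertex dominates it, so one must be slightly careful here. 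Assuming $lk(x)\neq\emptyset$, pick $z\in lk(x)\subseteq lk(u)$; then $d_\Gamma(x,u)=2$, so by barbedness (applied to the non-principal vertex $x$ — note $x<_\circ u$ makes $x$ non-principal) we get $x<_\circ u$, which we already have, so that's not yet a contradiction; instead apply barbedness more carefully: since $x$ is non-principal and $d_\Gamma(x,u)=2$ we conclude $x<_\circ u$; but we also want to rule out a chain, so iterate or use maximality of links along the $<_\circ$ chain, which must terminate at a principal vertex, forcing $u$ itself to be dominated in a way incompatible with $d_\Gamma=2$ to every intermediate vertex. Cleaner: show directly that if $u$ is non-principal, every vertex $x$ with $x<_\circ u$ (so $x\ne u$) must satisfy $x\in lk(u)$ — but that's impossible as shown — hence there is no such $x$, so $[u]$ is minimal.

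For the ``one element'' claim: suppose $u'\sim u$ with $u'\ne u$. Since $u$ is non-principal, at least one of $[u]_\star$, $[u]_\circ$ is a singleton; if $[u]_\circ$ were non-singleton then $[u]$ would be a non-abelian equivalence class, whose elements are all principal by the discussion preceding Lemma~\ref{abprin} — contradiction. So $[u]_\circ=\{u\}$, meaning $u'\sim_\star u$, i.e. $st(u')=st(u)$. But then by Lemma~\ref{abprin} both $u$ and $u'$ are principal, contradicting non-principality of $u$. Hence $[u]=\{u\}$.

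For the final assertion about \iGW-partitions: let $\WP$ be an \iGW-partition based at a non-principal vertex $u$, and suppose $\WP$ splits some vertex $y$, meaning $y$ and $y^{-1}$ lie in different sides of $\WP$. By the structure of \iGW-partitions, $\WP$ splits $y$ exactly when $\{y\}$ and $\{y^{-1}\}$ are each (singleton) elements of $\bI(u)$ lying in $P$ and $P^*$ respectively; in particular $y\in P\cup P^*=V^\pm\setminus lk(u)^\pm$, so $y\notin lk(u)$, and $\{y\}\in\bI(u)$ forces the component of $\Gamma-lk(u)$ containing $y$ to be the single vertex $y$. Now if $y\ne u$, then $y$ and $u$ are non-adjacent; since $u$ is non-principal, $u<_\circ m$ for some principal maximal $m$, and I claim $d_\Gamma(u,y)=2$: indeed $u$ and $y$ share a common neighbor because $lk(u)\neq\emptyset$ (it is contained in $lk(m)$ and $u$ is not isolated, since an isolated vertex is trivially principal) and... here I need $y$ adjacent to something in $lk(u)$, which is where I'd use that $y$'s component in $\Gamma-lk(u)$ is a single point, so every neighbor of $y$ lies in $lk(u)$, and $y$ has a neighbor unless $y$ is isolated (and an isolated vertex cannot be split by any partition based at $u\ne y$ since then $y$'s component is $\{y\}$ with $lk(y)=\emptyset\subseteq lk(u)$... this actually needs care). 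Granting $d_\Gamma(u,y)=2$, barbedness gives $u<_\circ y$, i.e. $lk(u)\subsetneq lk(y)$; but $lk(y)\subseteq lk(u)$ since all neighbors of $y$ are in $lk(u)$, a contradiction. Therefore $y=u$, so $\WP$ splits only $u$.

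The main obstacle I expect is the careful handling of isolated vertices and empty links throughout — an isolated vertex has $lk=\emptyset$ and is automatically principal (nothing can strictly contain the empty set properly... wait, $\emptyset\subsetneq lk(w)$ whenever $lk(w)\neq\emptyset$, so an isolated vertex \emph{is} dominated and hence non-principal unless the whole graph is discrete). I would need to either observe that in a barbed graph with a non-principal isolated vertex $u$ we'd need $d_\Gamma(u,v)=2$ for the dominating $v$, which is impossible (distance is $\infty$), yielding the result vacuously, or argue that barbedness forbids this configuration outright. Pinning down these degenerate cases cleanly, rather than the generic argument, is where the real work lies.
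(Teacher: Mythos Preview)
Your proposal has the right overall shape for the ``splits only that element'' claim, but there are two genuine errors in the other parts.

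\textbf{The one-element claim.} You write that if $[u]_\circ$ is non-singleton then $[u]$ is a non-abelian equivalence class, ``whose elements are all principal by the discussion preceding Lemma~\ref{abprin}.'' This misreads the paper: the sentence before Lemma~\ref{abprin} says that elements of non-singleton \emph{abelian} equivalence classes are principal, and Lemma~\ref{abprin} itself concerns $u\sim_\star v$, not $u\sim_\circ v$. There is no blanket principality result for non-abelian classes. To rule out $u'\sim_\circ u$ with $u'\neq u$ you must use barbedness: since $lk(u')=lk(u)\neq\emptyset$ and $u,u'$ are non-adjacent, they share a neighbour, so $d_\Gamma(u,u')=2$; barbedness applied to the non-principal vertex $u$ then gives $u<_\circ u'$, contradicting $lk(u)=lk(u')$.

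\textbf{The minimality claim.} Your argument applies barbedness to $x$ (obtaining $x<_\circ u$, which you already knew) and then gets stuck; the ``cleaner'' fix asserting $x\in lk(u)$ is unjustified (indeed $x\leq_\circ u$ means precisely that $x$ and $u$ do \emph{not} commute). The fix is simply to apply barbedness to $u$ rather than $x$: since $u$ is non-principal and $d_\Gamma(u,x)=2$, barbedness yields $u<_\circ x$, which contradicts $x<_\circ u$. You also do not treat the case $x<_\star u$; this needs a separate (short) argument, or an observation that it does not arise under the hypotheses you actually need downstream.

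Your concerns about isolated vertices are legitimate and worth flagging; the paper's one-word proof glosses over them, presumably because the graphs under consideration in Section~\ref{reduction} are connected.
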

\begin{proof} This is immediate.
\end{proof}

All of the graphs in Section~\ref{sec:examples} are barbed. Examples~\ref{exampletwo} and \ref{simpler} are examples of barbed graphs with  $M(V)>M(L)$.  We claim that if $\Gamma$ is barbed and $M(V)>M(L)$, then $K_\Gamma$ equivariantly deformation retracts to a smaller-dimensional complex.  Specifically, every cube in $K_\Gamma$ of dimension $M(V)$ has a free face, and the set of these free faces is invariant under the action of $\UG$.

In Lemmas~\ref{inside} to \ref{innermost} we fix a collection $\Pi$ of pairwise weakly compatible \GW-partitions with $M(V)>M(L)$ elements. Recall that $\Pi^\pm$ denotes the collection of all sides of elements of $\Pi$.   

\begin{lem}\label{inside}
Let $Q\in\Pi^\pm$ be  a non-principal $\mathit\Gamma\!$W  subset, based at some $u\in Q$. If $Q$ contains  some $m\geq_\circ u$ other than $u$, then $Q$ properly contains some $N\in \Pi^\pm$ with  $u\in N$.
\end{lem}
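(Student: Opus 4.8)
The plan is to use the compatibility of $\WQ$ (the partition with side $Q$) with the other partitions in $\Pi$ whose bases sit ``above'' $u$, together with the structure of $m$-inseparable sets, to extract a proper subside $N$. First I would set up notation: $\WQ\in\Pi$ is based at $u$, and $Q$ is the side containing $u$; by hypothesis there is $m\in Q$, $m\neq u$, with $m\geq_\circ u$, i.e. $lk(m)\supseteq lk(u)$ (note $m$ may be principal while $u$ is not). Since $u$ is non-principal and $\Gamma$ is barbed is \emph{not} assumed in this lemma — we only fixed $\Pi$ with $M(V)>M(L)$ — so I would work with the raw definitions: $Q$ is a union of $u$-inseparable subsets, and I want to peel off the inseparable piece containing $u$ to produce $N$.

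Here is the key idea. Because $m\geq_\circ u$, the vertex $m$ lies in some component $C$ of $\Gamma-lk(u)$, and I claim the component of $\Gamma-lk(u)$ containing $u$ is the singleton $\{u\}$ if and only if there is no other vertex adjacent to exactly the same or fewer things — which need not hold. So instead, the cleaner route is: consider the $u$-inseparable decomposition of $Q$. If the piece of $\bI(u)$ containing $u$ is just $\{u\}$, then $Q\setminus\{u\}$ — reattaching $u$ appropriately — is a candidate, but $Q\setminus\{u\}$ is not a $\mathit\Gamma$W subset since it doesn't contain $u$. The right move is to find a \emph{principal} partition already forced into $\Pi$. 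I would argue: since $m\geq_\circ u$ and $m\in Q$, and since $\WQ$ is weakly compatible with every principal partition $\WP\in\Pi$ based at $m$ or at a vertex $\sim_\circ m$, Lemma~\ref{useful} forces, for each such $\WP$ with side $P\ni m$, either $Q\cap P^*=\emptyset$ or $P\cap Q^*=\emptyset$; in the first case $Q\subseteq P$, in the second $P\subseteq Q^*$. The second is impossible because $m\in P\cap Q$. Hence $Q\subseteq P$ for every principal side $P$ through $m$ of a partition in $\Pi$. But then, running the argument of Proposition~\ref{PQ*} / Lemma~\ref{position*} in reverse, the smallest such $P$ that still lies inside $Q$ (or, if $\Pi_m=\emptyset$, the singleton-based inseparable set $\{u\}\cup(\text{component of }u)$) gives a proper subset $N\subsetneq Q$ that is in $\Pi^\pm$ (or is inseparable and can be adjoined) and still contains $u$.

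The step I expect to be the main obstacle is verifying that $N$ can be taken to \emph{properly} contain $u$ inside $Q$ \emph{and} to be an actual side of an existing element of $\Pi$ (rather than merely a $\mathit\Gamma$W subset that could in principle be added). This is where the hypothesis $M(V)>M(L)$ and the maximality/structure of $\Pi$ must be used: if no proper $N\in\Pi^\pm$ with $u\in N\subsetneq Q$ existed, then $\WQ$ would be ``innermost'' at $u$, and one would want to replace it by a principal partition based at $m$ (which is legitimate because $Q\subseteq P$ for all nearby principal $P$, so compatibility is preserved), contradicting either maximality of $\Pi$ or the count $M(V)>M(L)$. I would handle this by a short replacement argument paralleling the proof of Corollary~\ref{equiv}: swap $\WQ$ for the partition determined by the smallest principal side through $m$ lying in $Q$; everything else in $\Pi$ stays compatible by Lemma~\ref{useful} and Lemma~\ref{splits} (since $m$ being $\geq_\circ u$ and the barbed-free argument controls which vertices get split), and the resulting collection has the same size but one fewer non-principal partition, so iterating would drive the non-principal count to zero and force $M(V)=M(L)$, a contradiction. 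Hence such an $N$ exists, completing the proof.
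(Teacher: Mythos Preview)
Your proposal has a genuine gap, and the overall strategy differs from the paper's in a way that does not close.

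First, the internal inconsistency: you argue that for every principal side $P\ni m$ of a partition in $\Pi$ one has $Q\subseteq P$, and then in the next breath you look for ``the smallest principal side through $m$ lying in $Q$''. These cannot both hold. Moreover, there is no reason $\Pi$ should contain \emph{any} partition based at $m$ (indeed $m$ need not even be principal; the hypothesis is only $m\geq_\circ u$), so the whole discussion of principal $\WP\in\Pi_m$ may be vacuous.

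Second, the replacement/iteration idea does not yield the stated contradiction. Replacing this one $\WQ$ by a principal partition gives a new collection of the same size with one fewer non-principal partition; that contradicts neither maximality of $\Pi$ (same size) nor $M(V)>M(L)$ (you have only handled \emph{this} $\WQ$, not an arbitrary non-principal partition). To ``iterate to zero'' you would need the conclusion of Lemma~\ref{innermost} for every non-principal partition, but Lemma~\ref{innermost} itself invokes Lemma~\ref{inside}, so the argument becomes circular.

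The paper's approach is much more direct and uses maximality in the opposite direction: it \emph{adds} a partition rather than replacing one. Assume no $N\in\Pi^\pm$ with $u\in N\subsetneq Q$ exists. If nothing in $\Pi^\pm$ lies properly inside $Q$ at all, then the partition determined by the two-element set $\{u,m\}$ is weakly compatible with everything in $\Pi$, contradicting $|\Pi|=M(V)$. Otherwise take a largest $P\in\Pi^\pm$ with $P\subsetneq Q$, based at some $n$. If $n\not\geq_\circ u$, one checks $u,m\notin P$, and again $\{u,m\}$ can be added. If $n\geq_\circ u$, then $P\cup\{u\}$ is a \GW-subset whose partition is compatible with all of $\Pi$ and can be added. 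Either way maximality is violated. The key move you are missing is exploiting $|\Pi|=M(V)$ by \emph{enlarging} $\Pi$, not by trading $\WQ$ for something else.
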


\begin{proof}  Suppose the lemma is false, i.e. no $N\in\Pi^\pm$ is properly contained in $Q$ and also contains   $u$.  

If there are no elements at all of $\Pi^\pm$ properly contained in $Q$, then the \GW-partition determined by  $\{u,m\}$ is (weakly) compatible with all elements of  $\Pi$, contradicting maximality of $\Pi$.

Now take a largest $P\in \Pi^\pm$ properly contained in $Q$, based at some   $n\in P$.  If   $n\not\geq_\circ u$ then there is some $v\not\in lk(n)$ with $v\in lk(u)\subset lk(m)$, so $u,v$ and $m$    are all in the same component of $\Gamma-lk(n)$, so $u, v, m$ and their inverses are all on the same side of $P$, i.e. all are outside $P$.  If this is true for all largest 
$P$ contained in $Q$ we can add the partition determined by $\{u,m\}$ to $\Pi$, again contradicting maximality of $\Pi$.

If some largest $P$ is based at a vertex $n\geq_\circ u$ then $P\cup \{u\}$ is a \GW-subset and the corresponding \GW-partition is (weakly) compatible with all elements of $\Pi$, once again contradicting maximality of $\Pi$.  
 \end{proof}
 
\begin{definition} A \GW-partition $\WQ\in\Pi$   is {\em irreplaceable in $\Pi$} if  $\WQ$ is the only \GW-partition compatible with all elements of   $\Pi\setminus \WQ$. 
\end{definition}

\begin{definition}
A \GW-partition $\WQ\in \Pi$ based at $u$ is \textit{sandwiched in $\Pi$} if there are principal $m\in Q$ and $n\in Q^*$ with $m,n>_\circ u$  such that both   $Q_m=Q\setminus(\{m\}\cup lk(m)^\pm)$ and $Q^*_n=Q^*\setminus (\{n\}\cup lk(n))^\pm$ are in $\Pi^\pm$.
\end{definition}
 
\begin{figure}
\begin{center} 
\begin{tikzpicture}[scale=1] 
\draw [rounded corners, blue,](.5,1.7) to (3.5,1.7) to (3.5,-.4) to (1.5,-.4) to (1.5,.7) to (.5,.7) --cycle;
\draw [rounded corners, blue](-.5,-.4) to (-.5,.6) to (1.4,.6) to (1.4,-.4) to   (-.5,-.4) --cycle;
\draw [rounded corners, blue](.6,.8) to (.6,1.6) to (2.4,1.6) to (2.4,.8) to   (.6,.8) --cycle;
\draw [rounded corners, red](3.6,-.4) to (3.6,1.7) to (4.5,1.7) to (4.5,-.4) to   (3.6,-.4) --cycle;
\node [above](x) at (-1,1) {$ b_2$};
\node (x) at (-1,0) {$ b_2\inv$};
\node  (x) at (0,0) {$a_2\inv$};
\node [above](x) at (0,1) {$a_2$};
 \node (x) at (1,0) {$v_0\inv$};
\node [above](x) at (1,1) {$v_0$};
\node  (x) at (2,0) {$a_1\inv$}; 
 \node [above] (x) at (2,1) {$a_1$};
\node (x) at (3,0) {$b_1\inv$};
 \node [above](x) at (3,1) {$b_1$}; 
 \node (x) at (4,0) {$v_1\inv$};
\node [above](x) at (4,1) {$v_1$}; 
\node [blue] (P) at (2.5,.55) {$Q$};
\node [blue] (P) at (1.5, 1.2) {$P_1$};
\node [blue] (Pc) at (.5,.3) {$P_2$};
\node  [red]  (L) at (4.1,.55) {$lk(Q)$};
 \end{tikzpicture}
\end{center}
\caption{ $Q$ determines a \GW-partition for the tree in Figure~\ref{simpletree} that is sandwiched, with  $Q_{a_1\inv}=P_1$ and $Q^*_{a_2}=P_2$}\label{sandwich}
\end{figure}
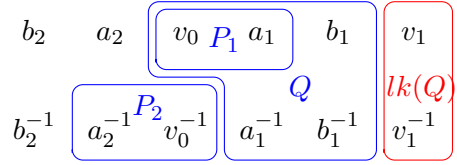

 \begin{lem}\label{irreplaceable}    If a non-principal partition $\WQ\in\Pi$ is sandwiched in $\Pi$  then $\WQ$ is irreplaceable in $\Pi$.
 \end{lem}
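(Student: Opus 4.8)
The plan is to suppose for contradiction that some $\Gamma$-Whitehead partition $\WR\neq\WQ$ is compatible with all of $\Pi\setminus\WQ$, and derive a contradiction from the sandwiched hypothesis. The point is that $\WR$ must respect both $\WP_1=\{P_1|P_1^*|lk(m)^\pm\}$ (the partition with side $Q_m=Q\setminus(\{m\}\cup lk(m)^\pm)$) and $\WP_2$ (the partition with side $Q^*_n=Q^*\setminus(\{n\}\cup lk(n)^\pm)$), together with any partitions in $\Pi$ based at $m$ or $n$ or in their stars. From compatibility with $\WP_1$ and $\WP_2$ one should be able to pin down one side $R$ of $\WR$ to lie essentially between $Q_m$ and $Q$, or between $Q^*_n$ and $Q^*$; and then using that $u$ is non-principal, $m,n>_\circ u$, and that $\WR$ based at some $s$ must (by Lemma~\ref{splits}, since in a barbed graph only $u$ is split by $\WQ$) avoid splitting $m$ and $n$, force $R$ to coincide with $Q$ or $Q^*$, i.e. $\WR=\WQ$.

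**Key steps.** First I would set up notation: let $\WP_1,\WP_2\in\Pi$ be the witnesses to $\WQ$ being sandwiched, with chosen sides $P_1=Q_m$ and $P_2^*=Q^*_n$ (so $P_1\subsetneq Q$ and $Q^*_n=Q^*\setminus(\{n\}\cup lk(n)^\pm)\subsetneq Q^*$, equivalently $Q\subsetneq P_2$ up to the link vertices). Suppose $\WR$ is based at $s$ and is compatible with every element of $\Pi$ except possibly $\WQ$; I want to show $\WR=\WQ$. Second, observe that since $\Gamma$ is barbed and $\WQ$ is non-principal based at $u$, the only vertex $\WQ$ splits is $u$, and $m,n$ are principal with $m,n>_\circ u$. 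If $s$ commutes with $u$, or $s\sim u$, a short argument (using that $\bI(u)$ forces $\WR$'s sides to interact with all of $P_1,Q,P_2$) gives the contradiction directly; so assume $s$ does not commute with $u$. Third, apply Lemma~\ref{useful} and Lemma~\ref{position*}-style reasoning to the chain $\oring P_1\subseteq \oring Q\subseteq \oring P_2$ (or rather $P_1\subset Q\subset P_2$ after accounting for links): compatibility of $\WR$ with $\WP_1$ and with $\WP_2$ forces a side $R$ of $\WR$ with $R\subseteq P_2\cap P_1^*$ after possibly swapping sides, and since $\WR$ cannot split $m$ or $n$ (they are principal, use Lemma~\ref{splits}), $R$ in fact lies in $\oring P_2\cap\oring P_1^*$. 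Fourth, conclude: the $m$-length / $n$-length bookkeeping of Lemma~\ref{length} together with the fact that $Q$ is itself the unique side of $\Pi^\pm$ sitting strictly between $P_1$ and $P_2$ in the relevant slot — because otherwise we could have enlarged $\Pi$, contradicting $M(V)$-maximality, or because the sandwiched hypothesis names $Q_m,Q^*_n$ as the \emph{adjacent} members of the nest — forces $R=Q$ and hence $\WR=\WQ$.

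**Main obstacle.** The hard part will be the last step: ruling out a partition $\WR$ whose side $R$ lies strictly between $P_1$ and $Q$, or strictly between $Q$ and $P_2$, without splitting $u$, $m$, or $n$. A priori such an $R$ could exist, based at some intermediate vertex $s$; the barbed hypothesis and the non-principality of $u$ must be used to exclude it. I expect the resolution is that such an $s$ would have to satisfy $u<_\circ s$ (by an argument like the one in Proposition~\ref{PQ*}), but then $s$ is principal, and being sandwiched between the principal vertices $m$ and $n$ in $\WQ$'s partition together with the nest structure around $m$ (resp.\ $n$) would contradict minimality/maximality in the choice underlying "sandwiched," or contradict that $Q_m,Q^*_n$ were the actual $\Pi^\pm$-neighbors of $Q$. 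I would also need the small lemma that in a barbed graph a non-principal $\WQ$ splits only $u$ (stated just before), which is what keeps the splitting-constraints from Theorem~\ref{commute} under control. Once the intermediate-$\WR$ case is closed, compatibility with $\WP_1,\WP_2$ alone pins $\WR$ to $\WQ$ and irreplaceability follows.
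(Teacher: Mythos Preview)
Your approach substantially overcomplicates the argument and misses the one observation that makes the paper's proof almost immediate. The decisive point is this: if $\WR\notin\Pi$ is compatible with all of $\Pi\setminus\WQ$, then by maximality of $\Pi$ the partition $\WR$ must be \emph{incompatible with $\WQ$ itself}. In particular no side of $\WR$ can be contained in $Q$ or in $Q^*$, since that would make $\WR$ compatible with $\WQ$ and hence with all of $\Pi$. This single remark dissolves your ``Main obstacle'' entirely: a side $R$ lying strictly between $Q_m$ and $Q$, or strictly between $Q^*_n$ and $Q^*$, would lie inside $Q$ (respectively $Q^*$), so no such $\WR$ exists. There is no need for Proposition~\ref{PQ*}, the nest machinery of Lemma~\ref{position*}, or any analysis of intermediate principal vertices $s>_\circ u$.

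Two further problems with your plan. First, the lemma does not assume $\Gamma$ is barbed; that hypothesis enters only in Lemma~\ref{innermost}. Your argument invokes barbedness (e.g.\ to assert that $\WQ$ splits only $u$), so as written it would prove a strictly weaker statement than what is claimed. Second, the ``chain'' $P_1\subset Q\subset P_2$ you propose is not a nest in the sense of Lemmas~\ref{nest} or~\ref{comp}: those results apply to partitions sharing a common base (or base class), whereas $\WP_1$, $\WQ$, $\WP_2$ are based at $m$, $u$, $n$ respectively, with different links. Lemma~\ref{position*} therefore does not apply, and even the containment $Q_m\subset P_2$ need not hold in general. The paper instead argues directly: once no side of $\WR$ lies in $Q_m$ or $Q^*_n$ (by maximality of $\Pi$) and $\WR$ cannot split both $m$ and $n$ (they are principal but separated by $\WQ$, hence inequivalent), compatibility with the two sandwiching partitions $\WP_1,\WP_2$ leaves $\WQ$ as the only possibility.
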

\begin{proof} If $Q_m$ and $Q_n^*$ are both in $\Pi^\pm$, then any  replacement for $\WQ$ cannot have a side contained in $Q_m$ or $Q_n^*$ (by maximality of $\Pi$) and cannot split both $m$ and $n$ (since $m$ and $n$ are on different sides of $\WQ$ so are not equivalent). Since $\WQ$ is the only \GW-partition that satisfies these conditions, $\WQ$ is irreplaceable.  
 \end{proof}
 
  \begin{lem}\label{innermost} Let  $\Gamma$ be a barbed graph and $Q\in\Pi^\pm$ innermost among non-principal sides, based at some $u\in Q$.   If $\WQ$ is not sandwiched in $\Pi$, then  $\WQ$ is replaceable by a principal partition.   
 \end{lem}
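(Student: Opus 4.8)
The plan is to analyze the innermost non-principal side $Q$ based at $u$ and show that, under the hypothesis that $\WQ$ is not sandwiched, one of the two ``exchange'' partitions $Q_m = Q\setminus(\{m\}\cup lk(m)^\pm)\cup\{w\}$-type modifications is available and gives a principal replacement. First I would invoke the fact that $\Gamma$ is barbed (via the Lemma following Definition~\ref{barbed}) to conclude that $u$ is the unique element of its equivalence class, that $u$ is minimal under $\leq_\circ$, and that $\WQ$ splits only $u$; in particular $u^{-1}\notin Q$ and $u\notin Q^*$ can be arranged, and every vertex $v\neq u$ with $d_\Gamma(u,v)=2$ satisfies $u<_\circ v$. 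Since $u$ is non-principal, there is a principal $m$ with $u<_\circ m$; because $m\in st(v)$ for every $v\in lk(u)$, after relabelling sides I may assume $m\in Q$ or $m\in Q^*$, and in fact (using that $\WQ$ does not split $m$, which follows from Lemma~\ref{splits} since $m$ is principal) all of $m$ lies on one side of $\WQ$.

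Next I would apply Lemma~\ref{inside}: since $Q$ is innermost among non-principal sides but contains some $m\geq_\circ u$ with $m\neq u$, there is no $N\in\Pi^\pm$ with $u\in N\subsetneq Q$ unless the hypotheses of that lemma fail — so actually Lemma~\ref{inside} tells us $Q$ \emph{does} contain such an $N$, and innermostness among \emph{non-principal} sides forces $N$ to be principal. This gives a principal side $P_1\subsetneq Q$ with $u\in P_1$; writing $\WP$ for the \iGW-partition it determines, $P_1$ plays the role of $Q_m$ up to the allowed modification at the principal base. Symmetrically, applying the same reasoning to $Q^*$ (using a principal $n$ with $u<_\circ n$ and $n$ on the $Q^*$ side, if such $n$ exists) would produce a principal side inside $Q^*$. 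The key dichotomy is: either there exist principal $m\in Q$ and principal $n\in Q^*$ both $>_\circ u$ with $Q_m$ and $Q^*_n$ in $\Pi^\pm$ — but that is exactly the definition of \emph{sandwiched}, contradicting the hypothesis — or else one of the two sides, say $Q^*$, contains no principal vertex $>_\circ u$ at all (the case where the relevant exchange partition fails to lie in $\Pi^\pm$ is handled by the maximality-of-$\Pi$ argument exactly as in Lemma~\ref{inside}, forcing the principal partition to already be present). In that case I would exchange $u$ for the principal vertex $m\in Q$ à la Remark~\ref{starless}: set $\WP$ to be the partition with side $Q\setminus(\{u\}\cup lk(u)^\pm)\cup\{m\}$, which is principal, and check via Lemma~\ref{abeliancompatible}(1),(3) and Remark~\ref{starless} that $\WP$ is weakly compatible with every element of $\Pi\setminus\{\WQ\}$ — the only partitions that could obstruct compatibility are those based at some $s$ with $st(s)=st(m)$, and for those we use that $\WQ$ was already compatible with them together with the fact that $u$ is the unique non-principal vertex it split.

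The main obstacle I expect is the compatibility verification in the last step: one must rule out that replacing the non-principal base $u$ by the principal $m$ creates a new incompatibility with some $\WR\in\Pi$ based at a vertex $s$ incomparable to $u$. Here Proposition~\ref{PQ*} is the right tool — applied to the chain determined by $\Pi_{[m]_\star}$, any such $\WR$ incompatible with the modified $\WP$ would have to have a side nested strictly inside $Q$ and be based at $s>_\circ u$; by innermostness and the barbed hypothesis this forces $s$ to be principal and, after re-running the argument with $s$ in place of $m$ finitely many times (the chain of principal vertices $>_\circ u$ is finite), we reach a configuration with no obstruction, at which point the modified partition is genuinely in $\Pi^\pm$ or can be added, and since $|\Pi|=M(V)$ is maximal it must already be there, giving the desired principal replacement. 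The ``not sandwiched'' hypothesis is used precisely to guarantee that we are never trapped needing exchanges on \emph{both} sides simultaneously.
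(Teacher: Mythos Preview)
Your central dichotomy is not correct. The negation of ``sandwiched'' is not that one side of $\WQ$ contains no principal vertex $>_\circ u$; in fact the paper's first observation is that in a barbed graph \emph{both} sides of $\WQ$ always contain principal vertices $>_\circ u$. What ``not sandwiched'' means is only that the specific subsets $Q_m$ and $Q^*_n$ are not \emph{both} already in $\Pi^\pm$---there can perfectly well be principal sides in $\Pi^\pm$ properly contained in each of $Q$ and $Q^*$ without those sides having the exact form $Q_m$ or $Q^*_n$. Your parenthetical appeal to maximality via Lemma~\ref{inside} does not close this gap: that lemma produces \emph{some} $N\in\Pi^\pm$ with $u\in N\subsetneq Q$, not the particular subset $Q_m$.

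Your proposed replacement $Q\setminus(\{u\}\cup lk(u)^\pm)\cup\{m\}$ is also problematic. Since $\WQ$ is based at $u$, the set $lk(u)^\pm$ is already disjoint from $Q$, so this is just $(Q\setminus\{u\})\cup\{m\}$, and there is no reason for it to be a union of $m$-inseparable subsets, i.e.\ a $\Gamma$W-subset based at $m$. Remark~\ref{starless} concerns the relation $st(v)\subset st(w)$, not $lk(v)\subset lk(w)$, so it does not apply to $u<_\circ m$; and Proposition~\ref{PQ*} is formulated for a principal $m$ and the chain $\Pi_{[m]_\star}$, so invoking it here would require further justification you have not supplied.

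The paper's argument is more direct and avoids these issues. From Lemma~\ref{inside} one takes a \emph{largest} $M\in\Pi^\pm$ with $u\in M\subsetneq Q$; innermostness forces $M$ to be principal, based at some $m>_\circ u$. If $M\neq Q_{m^{-1}}$ then $Q^*\cup M\setminus lk(m)^\pm$ is a thick $\Gamma$W-subset based at $m$, and the resulting principal partition replaces $\WQ$. If $M=Q_{m^{-1}}$, one repeats on the $Q^*$ side to get a maximal $N$ with $u^{-1}\in N\subsetneq Q^*$; according to whether $N$ is based at $u^{-1}$ or at a principal $n$, the replacement is $N\cup Q_{m^{-1}}$ (based at $m$) or $Q\cup N\setminus lk(n)^\pm$ (based at $n$). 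The sandwiched configuration is precisely the residual case $M=Q_{m^{-1}}$ and $N=Q^*_{n^{-1}}$, excluded by hypothesis. The idea you are missing is that the principal replacement is built by \emph{gluing} an existing principal side of $\Pi$ to the complementary side of $\WQ$, not by a base-exchange on $Q$ itself.
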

 
 \begin{proof}
 Since $\Gamma$ is barbed, there are principal elements bigger than $u$ on both sides of $\WQ$.  By Lemma~\ref{inside} there is a proper subset $M$ of $Q$ that is in $\Pi^\pm$ and contains $u$; take a largest such $M$.  Since $Q$ is an innermost non-principal subset, $M$ must be principal based at some $m$, which must be $>_\circ u$ since $M$ separates $u$ from $u\inv$.  Unless $M=Q_{m\inv}=Q\setminus \{m\inv\}\setminus lk(m)^\pm$,   the set 
  $Q^*\cup M\setminus lk(m)^\pm$ has at least two elements on each side so determines a principal \GW-partition which can replace $\WQ$.
   
If   $M=Q_{m\inv}=Q\setminus \{m\inv\}\setminus lk(m)^\pm$, we consider the other side $Q^*$ of $\WQ$.  By Lemma~\ref{inside} there is also a \GW-subset $N\subsetneq Q^*$ with $u\inv\in N$. 
Take a maximal such $N$.   If $N$ is based at $u\inv$ then 
$N\cup Q_{m\inv}$ is principal, based at $m$, and can replace $\WQ$.   So suppose $N$ is based at $n\neq u\inv$.  Since $N$ splits $u$ we must have $u\leq_\circ n$, and since $\Gamma$ is barbed $[u]=\{u\}$ so in fact $n>_\circ u$ and $n$ must be maximal.  If $N=Q^*_{n\inv}=Q^*\setminus\{ n\inv\}\setminus lk(n)^\pm$,  then $\WQ$ is sandwiched, contradicting our assumption.  Therefore the set $Q\cup N\setminus lk(n)^\pm$ is a  \GW-subset and the corresponding principal \GW-partition can replace $\WQ$.   
 \end{proof}

\begin{thm}  Let $\Gamma$ be a barbed graph with $M(V)>M(L)$.  Then the dimension of $K_\Gamma$ is strictly larger than  \vcd$(\UG)$.
\end{thm}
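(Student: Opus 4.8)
The plan is to construct a $\UG$-equivariant deformation retraction of $K_\Gamma$ onto a subcomplex of dimension at most $M(V)-1$. Granting this, since $K_\Gamma$ is contractible, $\UG$ acts properly on it, and $\UG$ is virtually torsion-free, the retract is a contractible complex of dimension $\le M(V)-1$ carrying a proper $\UG$-action, so a torsion-free finite-index subgroup acts freely on it with a classifying space of dimension $\le M(V)-1$; hence $\mathrm{vcd}(\UG)\le M(V)-1<M(V)=\dim K_\Gamma$, the equality being Theorem~\ref{dimension} together with Lemma~\ref{weak}. Thus everything reduces to producing the equivariant retraction, which I would realize as a simultaneous collapse across free codimension-one faces of the top-dimensional cubes.

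Recall from Section~\ref{spine} that the closed star of $(g,S)$ in $K_\Gamma$ is the union of the cubes $c(\emptyset,\Pi)$ for $\Pi$ weakly compatible, so every cube of $K_\Gamma$ having $(g,S)$ as a vertex equals $c(\emptyset,\Pi)$ for some such $\Pi$, with $c(\emptyset,\Pi')\subseteq c(\emptyset,\Pi)$ iff $\Pi'\subseteq\Pi$; in particular every $M(V)$-dimensional cube is a $\UG$-translate of some $c(\emptyset,\Pi)$ with $\Pi$ a maximal weakly-compatible collection of size $M(V)$. Since $M(V)>M(L)$, such a $\Pi$ must contain a non-principal partition. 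Fix $\Pi$ and choose a non-principal side $Q\in\Pi^\pm$ which is innermost — it contains no other non-principal element of $\Pi^\pm$ — based at a non-principal vertex $u$. By Lemma~\ref{innermost}, since $\Gamma$ is barbed, either $\WQ$ is sandwiched in $\Pi$, or $\WQ$ can be replaced by a principal partition to produce another maximal collection $\Pi'$ of size $M(V)$ with one fewer non-principal partition. In the sandwiched case $\WQ$ is \emph{irreplaceable} in $\Pi$ by Lemma~\ref{irreplaceable}: it is the only \GW-partition compatible with all of $\Pi\setminus\{\WQ\}$. I claim the codimension-one face $F=c(\emptyset,\Pi\setminus\{\WQ\})$ of $c(\emptyset,\Pi)$ is then a free face of $K_\Gamma$. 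Indeed, a cube properly containing $F$ has dimension $\ge M(V)$ and, since $(g,S)$ is a vertex of $F$, has $(g,S)$ among its vertices, so it equals $c(\emptyset,\Pi'')$ with $|\Pi''|=M(V)$ and $\Pi''\supseteq\Pi\setminus\{\WQ\}$; thus $\Pi''=(\Pi\setminus\{\WQ\})\cup\{\WR\}$ with $\WR$ compatible with $\Pi\setminus\{\WQ\}$, whence $\WR=\WQ$ by irreplaceability and $c(\emptyset,\Pi'')=c(\emptyset,\Pi)$. Finally, iterating the replacement move of Lemma~\ref{innermost} strictly decreases the number of non-principal partitions at each step and cannot terminate at an all-principal collection, which would have size $\le M(L)<M(V)$; so every top cube is joined by a finite chain of replacement moves to one that contains an irreplaceable non-principal partition, and therefore possesses a free face.

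The main obstacle is to weld these local data into a single coherent, equivariant collapse. Concretely I would build a matching on the face poset of $K_\Gamma$ pairing every $M(V)$-dimensional cube with one of its codimension-one faces — via the free face $F$ above when the innermost non-principal partition is irreplaceable, and via the common face with its Lemma~\ref{innermost}-replacement otherwise — and must verify that this matching is well defined (no cell used twice) and acyclic, the strictly decreasing count of non-principal partitions along replacement moves being what rules out cycles. Equivariance then comes for free: "barbed'', "principal'', "innermost'', "sandwiched'', "irreplaceable'' and the replacement move of Lemma~\ref{innermost} are defined purely in terms of $\Gamma$ and the combinatorics of $\Pi$, while $\UG$ merely permutes top cubes and their faces, so the matching is $\UG$-invariant and the induced equivariant deformation retraction deletes every cube of dimension $M(V)$, giving $\dim K_\Gamma>\mathrm{vcd}(\UG)$. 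The delicate point is carrying out this bookkeeping uniformly over all $\UG$-orbits — in particular resolving top cubes that have several competing innermost non-principal partitions — and it is here that the barbed hypothesis, through Lemmas~\ref{inside}--\ref{innermost}, is needed in full strength.
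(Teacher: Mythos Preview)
Your overall strategy matches the paper's: show that every top-dimensional cube of $K_\Gamma$ can be collapsed across a free codimension-one face, and do so equivariantly. Where you diverge is in how you establish this. You argue that by iterating Lemma~\ref{innermost} one eventually reaches \emph{some} maximal collection $\Pi^{(k)}$ containing a sandwiched partition, and then propose a discrete-Morse-style matching that pairs the intermediate cubes with the common face they share with their replacements. The paper avoids this detour entirely by proving a stronger fact: \emph{every} maximal $\Pi$ already contains an irreplaceable partition. The key step you are missing is the Claim in the paper's proof: if, after a single replacement $\Pi\to\Pi'$, some non-principal $\cS\in\Pi'$ is sandwiched in $\Pi'$, then $\cS$ was already sandwiched in the original $\Pi$ (the argument uses that the new side introduced by Lemma~\ref{innermost} has the form $T\cup M$ with $M$ principal splitting $u$, and a barbed graph forbids a principal side from splitting two distinct non-principal vertices). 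Iterating this backwards along the replacement chain shows the sandwiched partition you eventually find was sandwiched---hence irreplaceable---in $\Pi$ itself. Thus every top cube has a genuine free face, and the collapse is immediate and manifestly equivariant.

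Your alternative matching scheme, by contrast, has a real gap. When the innermost non-principal $\WQ$ of $\Pi$ is not sandwiched, the face $c(\emptyset,\Pi\setminus\{\WQ\})$ you propose to use is \emph{not} free: by construction it lies in both $c(\emptyset,\Pi)$ and $c(\emptyset,\Pi')$. So you are not describing an elementary collapse but a discrete Morse matching, and for that you must check that no codimension-one face is matched with two different top cubes. Since $\WQ$ is not irreplaceable there can be several $\WR$ compatible with $\Pi\setminus\{\WQ\}$, giving several top cubes $(\Pi\setminus\{\WQ\})\cup\{\WR\}$ sharing this face, and nothing in your rule prevents one of them from also selecting $\WR$ as \emph{its} innermost non-principal partition. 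You flag this as ``the delicate point'', and indeed it is; the paper's Claim is precisely what makes the bookkeeping unnecessary.
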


\begin{proof}  Let $\Pi$  be a maximal collection of weakly compatible \GW-partitions with $M(V)>M(L)$ elements.  Then $\Pi$ determines a cube $c(\emptyset,\Pi)$  in $K_\Gamma$ of dimension $M(V)$.  We will find a free face of this cube, namely  $c(\emptyset,\Pi\setminus \WQ)$ for some non-principal $\WQ$ and use it to collapse the cube.  We can do this equivariantly for all such cubes in all of $K_\Gamma$, thereby reducing the dimension of $K_\Gamma$ by $1$.

The cube  $c(\emptyset, \Pi\setminus\WQ)$ is a free face of $c(\emptyset, \Pi)$ if and only if   $\WQ$ is  irreplaceable.   So we are looking for an irreplaceable $\WQ$ in $\Pi$.  
Let $R\in \Pi^\pm$ be an innermost non-principal \GW-subset.  If the corresponding \GW-partition $\WR$ is sandwiched, then it is irreplaceable, by Lemma~\ref{irreplaceable} so we may take $\WQ=\WR$.  If it is not sandwiched, then it can be replaced by a principal \GW-partition $\WP$, by Lemma~\ref{innermost}, to form a new maximal collection $\Pi^\prime$. This new collection has the same size, so must still contain a non-principal \GW-partition.  

{\bf Claim}. If a non-principal $\cS\in \Pi'$  based at $v$  is sandwiched between $S_{m}$ and $S^*_{n}$ in $\Pi^\prime$, then it was already sandwiched in $\Pi$, so is irreplaceable in $\Pi$ by Lemma~\ref{irreplaceable}. 

 {\bf Proof of claim.} If $\cS$ is sandwiched in $\Pi^\prime$ but not in $\Pi$ then either $S_{m}$ or $S^*_{n}$ must be equal to the \GW-subset  we used  in Lemma~\ref{innermost} to replace $Q$.  In all cases this has a side of the form $S=T\cup M$ where $T$ is non-principal based at $u$ and $M$ is principal with  $u\in M$.  It follows that   $M$ is based at $m$ (if $S=S_m$) and $u,v<_\circ m$  (or at $n$ if $S=S^*_n$ and $u,v<_\circ n$) and that $u\neq v$.    But then $M$ splits both $u$ and $v$, which cannot happen in a barbed graph.  \qed

Now let $S$ be an innermost non-principal side in $\Pi^{\prime\pm}$.   If  $\cS$ is sandwiched in $\Pi^\prime$  then by the claim it was already sandwiched in $\Pi$, so is irreplaceable in $\Pi$ and we may take $\WQ=\cS$.   If it is not sandwiched, we can replace it by a principal partition by Lemma~\ref{innermost}.  
 We continue replacing innermost non-principal sides until we encounter one that is sandwiched (which must exist since $M(V)>M(L)$) and hence irreplaceable.   
 
As shown in  \cite{CSV}, the star of a Salvetti  $S_\Gamma$ in $K_\Gamma$ is the union of the cubes with $S_\Gamma$ as a vertex, and these cubes are identified with weakly compatible collections of \GW-partitions.  The stabilizer $S_\Gamma$ under the action of $\UG$ is isomorphic to the subgroup generated by graph automorphisms and inversions.
The effect of such an automorphism on the cubes in the star is to permute the labels of $V^\pm$.  Since incidence relations are preserved, any such automorphism  sends a \GW-partition to the ``same" \GW-partition with the labels permuted.   Since irreplaceable partitions are characterized by being sandwiched, such an automorphism sends sandwiched partitions to sandwiched partitions, and thus sends free faces to free faces.  Thus collapsing these free faces is an equivariant operation, giving an equivariant deformation retraction of $K_\Gamma$.  
\end{proof}

\end{document}